\newtheorem{thm}{Theorem}[section]
\newtheorem{lem}[thm]{Lemma}
\newtheorem{defi}[thm]{Definition}
\newtheorem{rem}[thm]{Remark}
\newtheorem{prop}[thm]{Proposition}
\newtheorem{cor}[thm]{Corollary}
\newtheorem{problem}[thm]{Problem}
\newcommand{\GL}{\mathrm{GL}}
\newcommand{\SL}{\mathrm{SL}}
\newcommand{\PGL}{\mathrm{PGL}}
\newcommand{\proj}{\mathrm{proj}}
\DeclareMathOperator{\Image}{im}
\DeclareMathOperator{\Gal}{Gal}
\DeclareMathOperator{\Frob}{Frob}
\newcommand{\cG}{\mathcal{G}}
\newcommand{\cP}{\mathcal{P}}
\newcommand{\CC}{\mathbb{C}}
\newcommand{\Cc}{\mathbb{C}}
\newcommand{\FF}{\mathbb{F}}
\newcommand{\Ff}{\mathbb{F}}
\newcommand{\QQ}{\mathbb{Q}}
\newcommand{\Qq}{\mathbb{Q}}
\newcommand{\ZZ}{\mathbb{Z}}
\newcommand{\Zz}{\mathbb{Z}}
\newcommand{\Qbar}{\overline{\QQ}}
\newcommand{\Zbar}{\overline{\ZZ}}
\newcommand{\rhobar}{{\overline{\rho}}}
\newcommand{\GQ}{{{\rm{G}}_\Qq}}
\begin{document}

\title{Galois families of modular forms and application to weight one}

\author{Sara Arias-de-Reyna, Fran\c{c}ois Legrand, Gabor Wiese}

\maketitle

\begin{abstract}
We introduce Galois families of modular forms. They are a new kind of family coming from Galois representations of the absolute Galois groups of rational function fields over $\Qq$. We exhibit some examples and provide an infinite Galois family of non-liftable weight one Katz modular eigenforms over $\overline{\Ff_p}$ for $p \in \{3,5,7,11\}$.

MSC Classification: 11F80 (Galois representations), 11F11 (Holomorphic modular forms of integral weight), 12F12 (Inverse Galois Theory), 12E30 (Field Arithmetic).
\end{abstract}

\section{Introduction} \label{sec:intro}

Families of modular forms and their attached Galois representations are of fundamental importance in current arithmetic geometric research. With this paper, we would like to draw attention to a new kind of families of modular forms, which we call {\em Galois families} (cf.~Definition~\ref{defi:GF}). For example, projective Galois families are defined as follows:

\vspace{2mm}

\noindent
{\bf{Definition A.}} {\it{Let $(f_i)_{i \in I}$ be a family of normalised Hecke eigenforms (of any level and weight). For a prime number $p$ and a finite subgroup $G \subset {\rm{PGL}}_2(\overline{\Ff_p})$, we say that the $(f_i)_{i \in I}$ form a {\rm{projective $G$-Galois family}} if the following two conditions hold:

\vspace{0.5mm}

\noindent
{\rm{(1)}} for each $i \in I$, the image of the projective mod $p$ Galois representation $\rho^\proj_{f_i, p} : {\rm{Gal}}(\overline{\Qq}/\Qq) \rightarrow {\rm{PGL}}_2(\overline{\Ff_p})$ associated with $f_i$ is conjugate to $G$,

\vspace{0.5mm}

\noindent
{\rm{(2)}} there exists a finite Galois extension $E$ of a rational function field $\Qq({\bf{T}}) = \Qq(T_1, \dots, T_n)$ with Galois group $G$ such that, for each $i \in I$, the number field $K^\proj_{f_i,p}$ 'cut out by $\rho^\proj_{f_i, p}$', that is, defined by ${\rm{ker}}(\rho^\proj_{f_i,p}) = {\rm{Gal}}(\overline{\Qq}/K^\proj_{f_i,p})$, is obtained by specialising the function field $E$ at some ${\bf{t}}_i \in \Qq^n$.}}

\vspace{2mm}

\noindent
We refer to \S\ref{sec:bas} for standard terminology and material on Galois representations, modular forms and functions field extensions.

Galois families are then taken with respect to a prime number~$p$ and have as feature that the projective mod~$p$ Galois representations of all members of the family have conjugate images. The family can be taken to consist of 'classical' holomorphic Hecke eigenforms or it can be chosen to be made of Katz modular Hecke eigenforms (geometrically) defined over $\overline{\Ff_p}$. We furthermore define projective Artin Galois families consisting of holomorphic Hecke eigenforms of weight one in a similar way, and also consider the linear case (in both settings). See Definition~\ref{defi:GF} for more details.

Galois families are fundamentally different from other kinds of families of modular forms, such as Hida families (see, e.g.,~\cite{Em11} for an account of the theory). On the one hand, if we took the classical members of a Hida family, the field cut out by the mod $p$ Galois representation would be the same in all cases. On the other hand, Galois families do not see $p$-adic deformations, so they miss the interesting information in Hida families. Galois families are rooted in field arithmetic and we see our paper as a step towards strengthening connections between field arithmetic and the automorphic theory.

In \S\ref{sec:BB}, we relate our notion of Galois families to problems and results from field arithmetic, such as the Beckmann-Black Problem and the existence of parametric extensions over $\Qq$ (recalled as Problem \ref{prob:BB} and Definition \ref{def:par_gen}, respectively), and formulate analogues for modular forms (see Problem \ref{prob:mf} and Definition \ref{def:para_mf}), which we partially answer. For example, if $G$ is any of the three groups $A_4$, $S_4$, $A_5$, by using results on the existence or non-existence of generic or parametric polynomials/extensions with rational coefficients for the group $G$, we prove that the family of all holomorphic normalised Hecke eigenforms of weight one with 'exceptional' projective Galois image $G$ is a projective Artin $G$-Galois family and that 2 is the minimal number of parameters we need to get such a family. See Theorems \ref{thm:parametric_poly2} and \ref{?} for more details.

In \S\ref{sec:explicit}, we focus on some Galois families consisting of modular forms of weight one since those are special in a number of ways. For example, a weight one Hecke eigenform which is geometrically defined over $\overline{\Ff_p}$ in the sense of Katz need not lift to a holomorphic weight one eigenform. Such non-liftable Hecke eigenforms are torsion classes in the cohomology of the relevant modular curve over $\overline{\Ff_p}$. As such they are sporadic. This is one, but not the only reason that we do not have any closed formulas for the dimension of spaces of weight one modular forms, not even over the complex numbers (another reason is that holomorphic weight one Hecke eigenforms with `exceptional' projective Galois image as above also seem to occur sporadically).

To the best of our knowledge, it is not known whether, for a given prime number $p$, there are infinitely many non-liftable Hecke eigenforms of weight one over $\overline{\Ff_p}$ with pairwise non-isomorphic projective Galois representations. We provide a general criterion which reduces to the existence of a finite Galois extension of the rational function field $\Qq(T)$ with specified Galois group admitting specialisations with specified local behaviour at some rational places, including the one associated with the prime number $p$ (see Theorem \ref{thm1}). We then use standard results on the local behaviour of specialisations (recalled in \S\ref{ssec:ffe}) and the existence of some explicit bivariate polynomials with rational coefficients to construct such infinite families for $p \in \{3,5,7,11\}$:

\vspace{2mm}

\noindent
{\bf{Theorem B.}} {\it{For every $p \in \{3,5,7,11\}$, there exists a finite group $G$ among ${\rm{PGL}}_2(\mathbb{F}_{p})$, ${\rm{PSL}}_2(\mathbb{F}_{p})$ and ${\rm{PSL}}_2(\mathbb{F}_{p^2})$ which fulfills the following: there exists an infinite projective $G$-Galois family consisting of Katz modular forms of weight one such that no family member is liftable to a holomorphic weight one Hecke eigenform in any level.}}

\vspace{2mm}

\noindent
See Corollaries \ref{coro4} and \ref{cor5} for more precise results, where the corresponding group $G$ is explicitly given.

\subsection*{Acknowledgements}

The authors would like to thank Lior Bary-Soroker, Pierre D\`ebes and Ian Kiming for providing useful feedback on early versions of the article.

S.~Arias-de-Reyna was supported by projects US-1262169 (Junta de Andalucía and FEDER, UE) and MTM2016-75027-P (Ministerio de Economía y Competitividad, Spain).

\section{Basics} \label{sec:bas}

The aim of this section is to present the standard material on Galois representations, modular forms and function field extensions that will be used throughout the present article.

\subsection{Galois representations and modular forms} \label{ssec:gr}

In this article, we shall be concerned with two-dimensional Galois representations which are either Artin, i.e., are defined over $\CC$, or have coefficients in $\overline{\Ff_p}$, where $p$ is a prime number. Both cases will be treated in parallel. To this end, we let 
$$\cG \in \{\GL_2(\overline{\Ff_p}), \GL_2(\CC)\}.$$
Then all Galois representations considered in this paper are continuous and of the form
$$\rho: \GQ \to \cG,$$
where $\GQ = \Gal(\Qbar/\QQ)$ is the absolute Galois group of $\QQ$. The image $\Image(\rho) \subset \cG$ is a finite group. By standard Galois theory,  one has $\Image(\rho) \cong \Gal(K_\rho/\QQ)$ for some number field~$K_\rho$. We refer to $K_\rho$ as the `number field cut out by $\rho$'. It can also be characterised by $\ker(\rho) = \Gal(\Qbar/K_\rho)$.

A Galois representation $\rho$ as above is said to be {\em irreducible} if the underlying $\overline{\Ff_p}[\GQ]$-module (or $\CC[\GQ]$-module) is irreducible. It is said to be {\em semi-simple} if it is a direct sum of simple modules. Accordingly, a finite subgroup $G \subset \cG$ is called {\em irreducible} if the natural inclusion is an irreducible representation. One furthermore says that $\rho$ is {\em odd} if $\det(\rho(c))=-1$, where $c$ is any complex conjugation in $\GQ$ (all are conjugate). For a prime number $\ell$, one says that $\rho$ is {\em unramified at~$\ell$} if the inertia group at~$\ell$ inside $\Gal(\overline{\Qq_\ell}/\QQ_\ell) \hookrightarrow \GQ$ (for any embedding) lies in the kernel of~$\rho$. This is equivalent to $K_\rho$ being unramified above~$\ell$.
 
We also consider {\em projective} Galois representations in the two cases. We accordingly let
$$\cP \in \{\PGL_2(\overline{\Ff_p}), \PGL_2(\CC)\},$$
consider
$$\rho^\proj:\GQ  \to \cP$$
and make similar definitions, such as $\Image(\rho^\proj) \cong \Gal(K_{\rho^\proj}/\QQ)$. Given a Galois representation $\rho$, one can associate to it a unique projective one $\rho^\proj$ via composition with the natural projection $\cG \twoheadrightarrow \cP$. If we set $G=\Image(\rho)$ and $G^\proj=\Image(\rho^\proj)$, then $G^\proj$ is the image of $G$ under the natural projection. Denote by $H$ the kernel of $G \to G^\proj$ and let $\rhobar : {\rm{G}}_\Qq / {\rm{ker}}(\rho) \rightarrow G$ be the isomorphism induced by $\rho$. If we identify $\Gal(K_\rho/\QQ)$ and $\GQ / {\rm{ker}}(\rho)$ (via restriction of automorphisms), then we have $K_{\rho^\proj} = (K_\rho)^{\rhobar^{-1}(H)}$ by standard Galois theory. We shall sometimes simply see $H$ as a subgroup of $\Gal(K_\rho/\QQ)$, i.e., drop $\rhobar^{-1}$ from the notation, but we must be aware of the dependence on the representation.

Given a projective Galois representation $\rho^\proj$ as above, by a result of Tate (see \cite[\S6]{Ser77} and \cite[\S4]{Que95}), it can be lifted to a linear representation $\rho$ as above. Moreover, it can be ensured that $\rho$ is unramified at all prime numbers where $\rho^\proj$ is unramified.

\medskip

This article relies on certain kinds of modular forms, which are called Hecke eigenforms. One can attach Galois representations of the above kinds to them. The modular forms we consider are either the `classical' holomorphic modular forms defined in standard textbooks such as \cite{DS05}, or their geometric counter part due to Katz \cite{Kat73}. A good source for both is \cite{DI95}. In both settings, modular forms have a weight, an integer usually denoted~$k$, and a level, a congruence subgroup of $\SL_2(\ZZ)$.
We shall exclusively work with levels $\Gamma_1(N)$ and usually just say that the positive integer $N$ is the level. Holomorphic modular forms of fixed weight and level form a finite dimensional $\CC$-vector space. Katz' geometric definition allows the use of other base rings, provided the level is invertible in the ring. We shall use only $\CC$ or $\overline{\Ff_p}$ as base fields and thus impose $p \nmid N$ in the latter case. It should be remarked that using Katz modular forms over~$\CC$, one exactly recovers classical modular forms.

Every modular form has a so-called {\em $q$-expansion}, that is, a power series $\sum_{n=0}^\infty a_n q^n$ with $a_n$ in the base field. If the base field is $\CC$, then replacing $q$ by $e^{2\pi i z}$, one obtains a Fourier series, which is actually equal to the modular form, viewed as a holomorphic function on the upper half-plane. When working over~$\CC$, one can consider the $\ZZ$-module of $q$-expansions of modular forms in fixed weight~$k$ and level~$N$ such that all $a_n$ lie in~$\ZZ$. One can reduce them modulo~$p$ and base extend to~$\overline{\Ff_p}$. If $k \ge 2$, one then essentially recovers Katz modular forms over~$\overline{\Ff_p}$ (see \cite[Lemma 1.9]{Edi97} for a precise statement). However, when $k=1$, there are often more Katz modular forms than reductions of holomorphic ones. A purpose of this article is to exhibit infinite families of such having interesting properties.

In both settings, there is a family of commuting linear maps $T_m$, for $m \in \ZZ_{\ge 1}$, on the vector space of modular forms, called {\em Hecke operators}. A modular form that is an eigenform for each $T_m$ is called a {\em Hecke eigenform}. If, moreover, the coefficient $a_1$ in the $q$-expansion equals~$1$, then call the eigenform {\em normalised}. If a normalised Hecke eigenform is a Katz modular form over~$\overline{\Ff_p}$, then the $a_n$ are, of course, in $\overline{\Ff_p}$. If it is a holomorphic modular form, then the eigen-property implies that the $a_n$ are algebraic integers, that is, lie in $\Zbar$, the integral closure of $\ZZ$ in~$\CC$. For the entire article, we fix ring homomorphisms $\Zbar \hookrightarrow \overline{\Zz_p} \twoheadrightarrow \overline{\Ff_p}$. By the {\em reduction modulo~$p$} of an algebraic integer, we shall always understand the image under the composite maps. So, the coefficients $a_n$ of a normalised holomorphic Hecke eigenform have well-defined reductions modulo~$p$.

\medskip

Work of Shimura, Deligne and Deligne--Serre (see \cite[\S9.6]{DS05} and \cite{DS75}) attaches to any normalised Hecke eigenform $f = \sum_{n=0}^\infty a_n q^n$ of weight $k$ and level~$N$ a semi-simple Galois representation $\rho_{f,p}: \GQ \to \GL_2(\overline{\Ff_p})$. The representation is known to be odd and unramified outside~$Np$. Moreover, at every prime $\ell \nmid Np$, the trace of any Frobenius element $\Frob_\ell$ (all are conjugate) equals (the reduction modulo~$p$ of)~$a_\ell$ and its determinant equals $\ell^{k-1} \epsilon(\ell)$, where $\epsilon$ is the nebentype character associated with~$f$ (its values are the eigenvalues for the action of the diamond operators on~$f$). If $f$ is a holomorphic normalised Hecke eigenform (over $\CC$) of weight $k=1$, by Deligne--Serre, one can associate with it a semi-simple Artin representation $\rho_{f,\CC}: \GQ \to \GL_2(\CC)$, which is odd, unramified outside~$N$ and, for every prime $\ell \nmid N$, the trace of $\Frob_\ell$ equals $a_\ell$ (as complex numbers) and its determinant equals $\epsilon(\ell)$. 

The projectivisation of the representations will be denoted $\rho_{f,p}^\proj$ and $\rho_{f,\CC}^\proj$, respectively. As abbreviations, we shall often write $\rho_f$ and $\rho_f^\proj$ for both the mod~$p$ and the Artin cases. Moreover, we set $K_f := K_{\rho_f}$ and $K_f^\proj := K_{\rho_{f}^\proj}$. If $p>2$, the oddness of $\rho_f$ implies that $K_f$ and $K_f^\proj$ are totally imaginary. Indeed, as complex conjugation is of determinant~$-1$, it is a non-trivial involution and not scalar. We point out that the representations $\rho_f$ need not be irreducible. However, if they are, then the underlying modular form is {\em cuspidal}. For $k=1$, $\rho_{f, \CC}$ is irreducible if and only if $f$ is cuspidal.

We recall that a theorem of Brauer and Nesbitt (\cite[30.16]{CR88}) states that a semi-simple representation of a finite group is uniquely determined up to isomorphism by its character. Thus, any semi-simple Galois representation with finite image is uniquely determined by the traces of the Frobenius elements $\Frob_\ell$ at primes $\ell$ in a set of primes of density one because by Chebotarev's density theorem \cite[VII.13.4]{Neu99} any element in the image of the representation comes from some $\Frob_\ell$ (in fact, for $\ell$ in a positive density set of primes).
As the trace of $\rho_f(\Frob_\ell)$ equals the coefficient $a_\ell$ of~$f$, the semi-simple representation $\rho_f$ is hence uniquely determined by~$f$ up to isomorphism of Galois representations, that is, up to conjugation. Consequently, the images of $\rho_f$ and $\rho_f^\proj$ are uniquely determined by $f$ up to conjugation in $\cG \in \{\GL_2(\overline{\Ff_p}), \GL_2(\CC)\}$ or $\cP  \in \{\PGL_2(\overline{\Ff_p}), \PGL_2(\CC)\}$.

A very important theorem of Khare-Wintenberger and Kisin (\cite[Theorem 1.2]{KW09} and \cite[Corollary 0.2]{Kis09}) is the following, which was formely known as {\em Serre's Modularity Conjecture}.

\begin{thm}\label{thm:Serre-Conj}
Let $p$ be a prime number and $\rho: \GQ \to \GL_2(\overline{\Ff_p})$ an odd irreducible Galois representation. Then there is a normalised Hecke eigenform (of some level and weight) such that $\rho \cong \rho_{f,p}$.
\end{thm}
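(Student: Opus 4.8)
The plan is to recognise the statement as Serre's Modularity Conjecture and to reproduce, in outline, the strategy of Khare--Wintenberger and Kisin. There is no hope of a short self-contained argument; instead the proof is an intricate induction that transports \emph{modularity} between Galois representations by passing through characteristic-zero lifts and by switching the residual characteristic. Here I describe the main moving parts.

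First, I would reduce to the construction of a good lift. Given the odd irreducible $\rho : \GQ \to \GL_2(\overline{\Ff_p})$, the initial step is to produce a continuous lift $\rhotilde : \GQ \to \GL_2(\cO)$, with $\cO$ the ring of integers of a finite extension of $\Qq_p$, reducing to $\rho$ modulo the maximal ideal, and such that $\rhotilde$ is odd, ramified at only finitely many primes, and \emph{geometric} in the sense of Fontaine--Mazur (crystalline or semistable at $p$ of prescribed Hodge--Tate weights). The existence of such lifts with controlled local behaviour is already a hard deformation-theoretic input: one studies the global deformation ring of $\rho$ subject to local conditions and must exhibit a characteristic-zero point of it. This is the content of Khare--Wintenberger's lifting theorem, built on Ramakrishna's method and a delicate analysis of local deformation rings.

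Next, I would invoke potential modularity. By Taylor's theorems, after restricting to $\Gal(\Qbar/F)$ for a suitable totally real Galois extension $F/\Qq$ in which $p$ is well behaved, $\rhotilde|_{\Gal(\Qbar/F)}$ becomes modular, i.e.\ associated with a Hilbert modular eigenform over $F$; this is obtained by realising $\rhotilde|_{\Gal(\Qbar/F)}$ inside a compatible system together with a residual representation in another characteristic that is visibly modular. The engine that then converts such information into honest modularity over $\Qq$ is the family of modularity lifting ($R=\TT$) theorems of Wiles, Taylor--Wiles and Kisin: these identify the relevant deformation ring with a Hecke algebra and thereby promote the modularity of a residual representation to that of its geometric lifts, as well as allowing descent of modularity from $F$ back to $\Qq$.

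The genuinely hard part is the induction that ties these tools together. One argues by a double induction on the residual characteristic $p$ and on an invariant measuring the ramification (equivalently, the conductor) of $\rho$. For the inductive step, embed $\rhotilde$ into a compatible system of $\lambda$-adic representations and \emph{switch primes}: choose an auxiliary prime $\ell$ for which the member at $\ell$ has residual representation of strictly smaller complexity, hence modular by the inductive hypothesis; a modularity lifting theorem upgrades this to modularity of the $\ell$-adic member, and compatibility of the system, followed by a further lifting step, transports modularity back to the place $p$ and to $\rho$ itself. Closing this induction requires exactly the interlocking of the existence of good lifts with the lifting theorems, under precise control of the local deformation rings at the auxiliary primes (including the awkward small-$p$ and $p=2$ phenomena, and the crystalline lifting theorems guaranteeing that the needed local components are non-empty of the expected dimension); this, together with establishing the base cases of small conductor and weight, is where I expect the main difficulty to lie. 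Once $\rhotilde$ is shown to be modular, the attached eigenform reduces modulo $p$ to a normalised Hecke eigenform $f$ with $\rho_{f,p} \cong \rho$, which is the desired conclusion.
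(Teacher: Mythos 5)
You have correctly recognised that this statement is Serre's Modularity Conjecture, and on this point the comparison is simple: the paper does not prove it at all. It is imported as a black box, with the proof deferred entirely to Khare--Wintenberger \cite{KW09} and Kisin \cite{Kis09}; in the context of this paper, the ``proof'' is the citation. Your outline is a broadly faithful summary of what those references actually do: construction of geometric $p$-adic lifts of $\rho$ with prescribed local behaviour via deformation theory, embedding into compatible systems using Taylor's potential modularity over totally real fields, the $R=\TT$ modularity lifting theorems of Wiles, Taylor--Wiles and Kisin, and the double induction on the residual characteristic and on the ramification, with prime-switching inside compatible systems to transport modularity. Two minor imprecisions: the base case of the induction is Serre's conjecture in level one (established earlier by Khare, building on Tate and Serre for $p=2,3$), rather than a generic ``small conductor and weight'' case; and modularity over $\Qq$ is not recovered by ``descending'' modularity from the auxiliary totally real field $F$ via a lifting theorem --- potential modularity is used to manufacture the compatible systems, while modularity over $\Qq$ itself emerges from the inductive prime-switching argument. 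Since no sketch of this length can constitute a verifiable proof of a theorem of this depth, the only checkable content is whether your outline matches the known argument, and it does; but the appropriate move here, as the authors make, is simply to cite the theorem rather than to gesture at its proof.
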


A notable consequence is the modularity of Artin representations of the following type (\cite[Corollary 10.2(ii)]{KW09}).

\begin{thm}\label{thm:Serre-Artin}
Let $\rho: \GQ \to \GL_2(\CC)$ be an odd and irreducible Galois representation. Then there is a normalised `classical' holomorphic Hecke eigenform of weight one (and some level) such that $\rho \cong \rho_{f,\CC}$.
\end{thm}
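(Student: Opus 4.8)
The plan is to deduce this from Serre's modularity conjecture (Theorem~\ref{thm:Serre-Conj}) by passing to a well-chosen residual representation and then lifting the resulting mod~$p$ eigenform back to characteristic zero. Since $\rho$ is a continuous representation of the profinite group $\GQ$ into $\GL_2(\CC)$, its image $G = \Image(\rho)$ is finite. First I would fix a prime number $p > 2$ subject to the three conditions that $p \nmid |G|$, that $\rho$ is unramified at~$p$, and that the finitely many primes ramifying in $\rho$ are all different from~$p$; all but finitely many primes qualify. After conjugating $G$ into $\GL_2(\cO)$ for the ring of integers $\cO$ of a suitable finite extension of $\QQ_p$ containing the matrix entries, reducing modulo the maximal ideal $\fm$ and composing with $\cO/\fm \hookrightarrow \overline{\Ff_p}$ yields a continuous representation $\rhobar : \GQ \to \GL_2(\overline{\Ff_p})$.

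Next I would verify that $\rhobar$ satisfies the hypotheses of Theorem~\ref{thm:Serre-Conj}. It is odd, since $\det(\rhobar(c))$ is the reduction of $\det(\rho(c)) = -1$, which remains $-1 \neq 1$ in $\overline{\Ff_p}$ because $p > 2$. It is irreducible: as $p \nmid |G|$, reduction modulo~$p$ induces a bijection between the irreducible complex and the irreducible $\overline{\Ff_p}$-representations of $G$ (Brauer), so the two-dimensional irreducible $\rho$ reduces to a two-dimensional irreducible $\rhobar$ with $\Image(\rhobar) \cong G$. Finally, since $\ker(\rho) \subseteq \ker(\rhobar)$, the representation $\rhobar$ is unramified at~$p$. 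Applying Theorem~\ref{thm:Serre-Conj} produces a normalised Hecke eigenform $g$ with $\rho_{g,p} \cong \rhobar$; invoking the refinements on the optimal weight and level (also established in \cite{KW09}) and using that $\rhobar$ is unramified at~$p$, one may take $g$ to be a Katz eigenform over $\overline{\Ff_p}$ of weight one and level equal to the prime-to-$p$ Artin conductor of $\rho$.

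The remaining and decisive task is to lift $g$ to a holomorphic weight one Hecke eigenform over $\CC$ realising $\rho$ itself, rather than merely its reduction, and this is where I expect the genuine difficulty to lie: it is precisely the liftability problem that motivates the present article, since a Katz weight one eigenform mod~$p$ need not come from characteristic zero. The point is that $g$ is not arbitrary: it arises from the honest complex representation $\rho$, which, viewed $p$-adically through the fixed embeddings, provides a de Rham lift of $\rhobar$ that is unramified at~$p$ and has Hodge--Tate weights $\{0,0\}$. Because $G$ is finite, $\rhobar(\Frob_p)$ has finite order, so its two eigenvalues are roots of unity and in particular units; hence $\rhobar$ is ordinary at~$p$. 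I would then invoke a modularity lifting theorem in the ordinary case (Kisin, as used in \cite{KW09}) together with a classicality result in weight one: the lift with both Hodge--Tate weights zero corresponds to a $p$-adic ordinary form of weight one, which is classical, and by the Deligne--Serre construction its associated Artin representation shares the Frobenius traces of $\rho$ on a density-one set of primes. By Brauer--Nesbitt and Chebotarev (as recalled above) this forces $\rho \cong \rho_{f,\CC}$ for the resulting holomorphic weight one eigenform $f$, which completes the argument.
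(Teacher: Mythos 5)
The paper itself gives no proof of this theorem: it is quoted verbatim from Khare--Wintenberger \cite[Corollary 10.2(ii)]{KW09}, so your proposal has to be measured against the argument behind that citation. Your overall architecture --- reduce $\rho$ modulo a large prime $p$ with $p \nmid |G|$, check $\rhobar$ is odd and irreducible (Brauer), apply Theorem~\ref{thm:Serre-Conj}, lower the weight to a Katz weight one form using that $\rhobar$ is unramified at $p$, then climb back to characteristic zero using the $p$-adic realisation of $\rho$ (unramified at $p$, Hodge--Tate weights $\{0,0\}$) --- is indeed the strategy of Khare--Wintenberger, and the first half of it is correct as you state it.

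The genuine gap is in the step you yourself flag as decisive. First, the ``modularity lifting theorem in the ordinary case (Kisin, as used in \cite{KW09})'' does not apply here: the Kisin-style theorems underlying \cite{KW09} require \emph{regular} (distinct) Hodge--Tate weights, whereas your lift has weights $\{0,0\}$; weight one is not cohomological and the Taylor--Wiles method does not reach it directly. Second, the assertion that a ``$p$-adic ordinary form of weight one \dots{} is classical'' is false in general: Coleman's classicality criterion (slope $<k-1$) is vacuous for $k=1$, and non-classical ordinary overconvergent weight one eigenforms exist in abundance --- a failure closely related to the non-liftability phenomenon the rest of the paper exploits. The correct tool is the theorem of Buzzard--Taylor (\emph{Companion forms and weight one forms}, Ann.\ of Math.\ 149 (1999)): if the $p$-adic representation is unramified at $p$, its reduction is modular, irreducible and \emph{$p$-distinguished}, i.e., $\rhobar(\Frob_p)$ has distinct eigenvalues, then it arises from a classical weight one eigenform. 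You do have unramifiedness at $p$ in hand, but you never verify $p$-distinguishedness, and it is not automatic. The standard fix is to add one condition to your choice of $p$: by Chebotarev, pick $p$ (large, prime to $|G|$ and to the ramification) such that $\Frob_p$ maps to the class of complex conjugation in $\Gal(K_\rho/\Qq)$; then $\rho(\Frob_p)$ has eigenvalues $1$ and $-1$, which remain distinct modulo $p>2$. With that condition added, and Buzzard--Taylor invoked in place of ``Kisin plus generic classicality'', your sketch becomes essentially the actual proof of \cite[Corollary 10.2(ii)]{KW09}.
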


In view of our desire to make elegant and short statements, we make the following convention. If a representation $\rho: \GQ \to \cG$ (resp., a projective represention $\rho^\proj: \GQ \to \cP$) comes from a normalised Hecke eigenform~$f$, then we assume $f$ to be holomorphic of weight one if we are in the Artin case.

The following practical consequence of the above shall be used on several occasions in the sequel:

\begin{prop} \label{tool0}
Let $G \subset \cG$ (resp., $G \subset \cP$) be a finite irreducible subgroup and $F/\Qq$ a Galois extension of group $G$. Then there exists a normalised Hecke eigenform $f$ such that $K_{f}=F$ (resp., $K_f^\proj = F$) if and only if 

\vspace{0.5mm}

\noindent
- $F^{\text{scalars in } G}$ (resp., $F$) is totally imaginary if we are in the Artin case or in the mod $p$ case with $p\ge 3$,

\vspace{0.5mm}

\noindent
- $F$ is arbitrary if we are in the mod $p$ case with $p=2$.
\end{prop}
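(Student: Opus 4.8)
The plan is to prove both implications by translating between the field $F$ and a Galois representation that cuts it out, using the oddness relation $\det(\rho(c))=-1$ (for a complex conjugation $c$) as the bridge to the modularity theorems recalled above.

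For necessity, suppose $f$ is a normalised Hecke eigenform with $K_f=F$ (resp.\ $K_f^\proj=F$). The attached representation $\rho_f$ is odd, so $\det(\rho_f(c))=-1$. In the Artin case or the mod~$p$ case with $p\ge 3$ we have $-1\neq 1$, so $\rho_f(c)$ is a non-scalar involution; in particular its image in $\cP$ is a non-trivial involution too. Hence $c$ acts non-trivially on $K_f$ (resp.\ on $K_f^\proj$), so this field is not totally real; since it is Galois over $\Qq$, all its archimedean places are of the same type, and it is therefore totally imaginary. When $p=2$ there is nothing to prove, as the condition on $F$ is vacuous.

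For sufficiency, I would first manufacture a representation cutting out $F$: fixing an isomorphism $\Gal(F/\Qq)\cong G$ and composing the restriction map $\GQ\twoheadrightarrow\Gal(F/\Qq)$ with this isomorphism and the given inclusion yields a continuous $\rho\colon\GQ\to\cG$ (resp.\ a projective $\rho^\proj\colon\GQ\to\cP$) with image $G$ and $\ker\rho=\Gal(\Qbar/F)$, so that $K_\rho=F$ (resp.\ $K_{\rho^\proj}=F$). As $G$ is an irreducible subgroup, $\rho$ is irreducible. In the projective case I would then apply Tate's lifting theorem to obtain a linear lift $\rho$, chosen unramified wherever $\rho^\proj$ is.

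The crux is to arrange that $\rho$ is odd, for then Serre's Modularity Conjecture (Theorem~\ref{thm:Serre-Conj}), or in the Artin case its consequence (Theorem~\ref{thm:Serre-Artin}) together with our weight-one convention, yields a normalised Hecke eigenform $f$ with $\rho_f\cong\rho$, whence $K_f=F$ (resp.\ $K_f^\proj=F$). When $p=2$ oddness is automatic: $\rho(c)^2=\rho(c^2)=I$ gives $\det(\rho(c))^2=1$, and in characteristic two this forces $\det(\rho(c))=1=-1$, so no hypothesis on $F$ is needed. Otherwise the point is that $c$ must map to a non-scalar involution, and here the projective setting is the favourable one: if $F$ is totally imaginary then $\rho^\proj(c)\neq 1$, and since any lift satisfies $\rho(c)^2=I$, the matrix $\rho(c)$ is a non-scalar involution, forcing $\det(\rho(c))=-1$. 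I expect the linear case to be the main obstacle, since $\rho(c)$ is \emph{a priori} only a non-trivial involution and could a~priori be the scalar $-I$ (of determinant $1$); the argument must therefore exploit the freedom in the isomorphism $\Gal(F/\Qq)\cong G$ to send complex conjugation to an involution of determinant $-1$, using the presence of an odd involution in the irreducible subgroup $G$.
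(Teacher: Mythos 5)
Your proof follows the same route as the paper's: the necessity direction via oddness of $\rho_f$ (vacuous for $p=2$, and forcing $K_f$, $K_f^\proj$ to be totally imaginary when $-1\neq 1$), and the sufficiency direction by viewing $F/\Qq$ as a Galois representation with image $G$, establishing oddness through the observation that a non-scalar involution in $\GL_2$ has determinant $-1$, and then invoking Theorem \ref{thm:Serre-Conj} (resp.\ Theorem \ref{thm:Serre-Artin} with the weight-one convention), with Tate's lifting theorem inserted in the projective case. Your treatment of the projective case is complete and correct: any lift $\rho$ of $\rho^\proj$ satisfies $\rho(c)^2=\rho(c^2)=I$, and $\rho^\proj(c)\neq 1$ forces $\rho(c)$ to be non-scalar, hence of determinant $-1$. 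This is exactly the mechanism the paper relies on (and re-uses later, e.g.\ in the proof of Theorem \ref{thm1}).

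The linear case, which you flag as ``the main obstacle'' and leave open, is a genuine gap in your write-up --- but it is the \emph{same} gap that sits in the paper's own proof. The paper simply asserts that, $F$ being totally imaginary, ``complex conjugation maps to a non-scalar involution in $\cG$ and thus has determinant $-1$''; the inference from non-trivial to non-scalar is invalid, since $-I$ is a non-trivial scalar involution of determinant $+1$. Moreover, the repair you propose --- exploiting the freedom in the isomorphism $\Gal(F/\Qq)\cong G$ to send $c$ to an odd involution --- cannot work in general: any isomorphism carries the centre to the centre, and by Schur's lemma a central involution of an irreducible subgroup $G$ is scalar, i.e.\ equal to $-I$. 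So whenever complex conjugation is central in $\Gal(F/\Qq)$, every choice of isomorphism produces $\rho(c)=-I$, an even representation. This situation genuinely occurs: take $G\cong Q_8$ (irreducible in $\GL_2(\CC)$ and in $\GL_2(\overline{\Ff_p})$ for odd $p$), whose unique involution is central, and let $F$ be a totally imaginary $Q_8$-extension of $\Qq$ (these exist: $Q_8$-fields are totally real or CM, and twisting a totally real solution of the embedding problem by $-1$ yields a CM one). Any two-dimensional semi-simple representation cutting out $F$ is then a faithful, hence irreducible, representation of $Q_8$, sends $c$ to $-I$, and is therefore even --- whereas every $\rho_f$ is odd. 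Hence no normalised Hecke eigenform satisfies $K_f=F$, and the ``if'' direction of the linear statement fails for this $G$. In summary: the part of your argument that is complete (necessity, and sufficiency in the projective case --- the case actually used in the rest of the paper) is sound and matches the paper; the linear case cannot be completed along the lines you suggest, because the step you declined to take is precisely where the paper's published proof is flawed.
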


\begin{proof}
We prove this statement in the mod $p$ case. The arguments in the Artin case are exactly the same, except that one has to invoke the modularity of odd and irreducible Artin representations from Theorem~\ref{thm:Serre-Artin}.

First, assume there exists a normalised Hecke eigenform $f$ such that $K_{f}=F$. Then, as recalled above, $\rho_{f}$ is odd. If $p \geq 3$, this implies that $K_{f}^{\proj}=F^{\text{scalars in }G}$ is totally imaginary. 
Now, assume $F^{\text{scalars in }G}$is totally imaginary if $p \geq 3$. We view the extension $F/\Qq$ as a Galois representation
$$\rho : \GQ \twoheadrightarrow \Gal(F/\QQ) \cong G \subset \cG.$$
Then $\rho$ is odd. Note that oddness is an empty condition if $p=2$, so it suffices to consider $p\ge 3$. Then, indeed, as $F^{\text{scalars in }G}$ is totally imaginary, (any) complex conjugation is sent to a non-scalar element in $G$. Hence, under $\rho$, complex conjugation maps to a non-scalar involution in $\cG$ and thus has determinant $-1$. As $\rho$ is also irreducible, it is afforded by a normalised Hecke eigenform $f$ by Theorem~\ref{thm:Serre-Conj}.

\vspace{0.5mm}

\noindent
In the projective case, the same arguments as above yield the desired equality $K_f^\proj = F$, except that one has to invoke Tate's theorem (recalled above) to lift the projective representation to a linear one to obtain the modularity in the last step.
\end{proof}

\subsection{Function field extensions} \label{ssec:ffe}

Given a field $k$ of characteristic zero, an integer $n \geq 1$ and an $n$-tuple ${\bf{T}}=(T_1, \dots,T_n)$ of algebraically independent indeterminates, let $E/k({\bf{T}})$ be a finite Galois extension. If $n=1$, we write $E/k(T)$ for simplicity. Say that $E/k({\bf{T}})$ is {\it{$k$-regular}} if $E \cap \overline{k}= k$.

Let ${B}$ be the integral closure of $k[{\bf{T}}]$ in ${E}$. For ${\bf{t}}=(t_1, \dots, t_n) \in k^n$, the residue field of ${B}$ at a maximal ideal $\mathfrak{P}$ lying over the ideal $\langle {\bf{T}} - {\bf{t}} \rangle$ of $k[{\bf{T}}]$ generated by $T_1-t_1, \dots, T_n-t_n$ is denoted by ${E}_{\bf{t}}$ and the extension ${E}_{\bf{t}}/k$ is called the {\it{specialisation}} of ${E}/k({\bf{T}})$ at ${\bf{t}}$. As the extension ${E}/k({\bf{T}})$ is Galois, the field ${E}_{\bf{t}}$ does not depend on $\mathfrak{P}$ and the extension ${E}_{\bf{t}}/k$ is finite and Galois. Moreover,  the Galois group of ${E}_{\bf{t}}/k$ is the quotient of the decomposition group of $E/k({\bf{T}})$ at $\mathfrak{P}$ by the inertia group at $\mathfrak{P}$. For ${\bf{t}}$ outside a Zariski-closed proper subset (depending only on ${E}/k({\bf{T}})$), the inertia group at $\mathfrak{P}$ is trivial; in particular, the Galois group of ${E}_{\bf{t}}/k$ is a subgroup of ${\rm{Gal}}({E}/k({\bf{T}}))$. Furthermore, if $P({\bf{T}},Y) \in k[{\bf{T}}][Y]$ is a monic separable polynomial of splitting field $E$ over $k({\bf{T}})$ and if ${\bf{t}} \in k^n$ is such that the splitting field of $P({\bf{t}},Y)$ has Galois group ${\rm{Gal}}(E/k({\bf{T}}))$ over $k$, then the field $E_{\bf{t}}$ is the splitting field over $k$ of $P({\bf{t}},Y)$ \footnote{Indeed, since $P({\bf{T}},Y)$ is monic and is in $k[{\bf{T}}][Y]$, the splitting field over $k$ of $P({\bf{t}},Y)$ is contained in the specialised field $E_{\bf{t}}$. As the former field has degree $|G|$ over $k$ and the latter has degree at most $|G|$ over $k$, the two fields coincide.}.

Assume $n=1$. A point $t_0 \in  \mathbb{P}^1(\overline{k})$ is a {\it{branch point}} of $E/k(T)$ if the prime ideal of $\overline{k}[T-t_0]$ generated by $T-t_0$ ramifies in the extension $E\overline{k}/\overline{k}(T)$ \footnote{Replace $T-t_0$ by $1/T$ if $t_0=\infty$.}. The extension $E/k(T)$ has only finitely many branch points, usually denoted by $t_1, \dots, t_r$, and one has $r=0$ if and only if $E\overline{k}=\overline{k}(T)$ (which is equivalent to $E=k(T)$ if $E/k(T)$ is $k$-regular). If $t_0 \in k \setminus \{t_1, \dots,t_r\}$, then the Galois group of the specialisation $E_{t_0}/k$ of $E/k(T)$ at $t_0$ is the decomposition group at a prime ideal $\mathfrak{P}$ lying over $\langle T-t_0 \rangle$. Moreover, if $E$ is the splitting field over $k(T)$ of a monic separable polynomial $P(T,Y) \in k[T][Y]$ and if $t_0$ is any element of $k$ such that $P(t_0,Y)$ is separable, then $t_0$ is not a branch point of $E/k(T)$ and the field $E_{t_0}$ is the splitting field over $k$ of $P(t_0,Y)$.

Let $E/\Qq(T)$ be a $\Qq$-regular Galois extension. In the sequel, we shall deal with the local behaviour at prime numbers of specialisations of $E/\Qq(T)$. This requires the following material. See \cite[\S2.2]{Leg16} for more details.

Given a number field $F$, let $A$ be the ring of its integers. For a non-zero prime ideal $\mathfrak{P}$ of $A$, we denote the corresponding valuation of $F$ by $v_\mathfrak{P}$. Moreover, we identify $\mathbb{P}^1(F)$ with $F \cup \{\infty\}$ and set $1/\infty = 0$, $1 / 0 = \infty$, $v_\mathfrak{P}(\infty) = -\infty$ and $v_\mathfrak{P}(0) = \infty$.

\begin{defi} \label{rencontre} 
{\rm{(1)}} Let $F$ be a number field, $A$ the ring of its integers, $\mathfrak{P}$ a (non-zero) prime ideal of $A$ and $t_0$, $t_1 \in \mathbb{P}^1(F)$. We say that {\em{$t_0$ and $t_1$ meet modulo $\mathfrak{P}$}} if one of the following conditions holds: 

\vspace{0.5mm}

{\rm{(a)}} $v_{\mathfrak{P}}(t_0) \geq 0$, $v_{\mathfrak{P}}(t_1) \geq 0$ and $v_{\mathfrak{P}}(t_0-t_1) > 0$,

\vspace{0.5mm}

{\rm{(b)}} $v_{\mathfrak{P}}(t_0) \leq 0$, $v_{\mathfrak{P}}(t_1) \leq 0$ and $v_{\mathfrak{P}}((1/t_0) - (1/t_1)) > 0$.

\vspace{1mm}

\noindent
{\rm{(2)}} Given $t_0$, $t_1$ $\in \mathbb{P}^1(\overline{\Qq})$ and a prime number $p$, we say that {\em{$t_0$ and $t_1$ meet modulo $p$}} if there exists a number field $F$ satisfying the following two conditions:

\vspace{0.5mm}

{\rm{(a)}} $t_0$, $t_1$ $\in \mathbb{P}^1(F)$,

\vspace{0.5mm}

{\rm{(b)}} $t_0$ and $t_1$ meet modulo some prime ideal of the ring of integers of $F$ lying over $p \Zz$.
\end{defi}

\begin{rem} \label{3.2}
{\rm{(1)}} Definition \ref{rencontre}{\rm{(2)}} does not depend on the number field $F$ such that $t_0$ and $t_1$ $\in \mathbb{P}^1(F)$.

\vspace{0.5mm}

\noindent
{\rm{(2)}} If a given $t_0 \in \mathbb{P}^1(\Qq)$ meets a given $t_1 \in \mathbb{P}^1(\overline{\Qq})$ modulo a given prime number $p$, then $t_0$ meets each $\Qq$-conjugate of $t_1$ modulo $p$. 
\end{rem}

We shall need the following lemma:

\begin{lem} \label{lemma3}
Let $p$ be a prime number and $r$ the number of branch points of $E/\Qq(T)$. Suppose $p \geq r+1$. Then there exists $t_0 \in \Qq$ such that $t_0$ does not meet any branch point of $E/\Qq(T)$ modulo $p$.
\end{lem}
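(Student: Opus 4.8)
The plan is to reduce the statement to a counting argument in $\mathbb{P}^1(\Ff_p)$. First I would unwind Definition \ref{rencontre}: a rational point $t_0 \in \Qq$ and a branch point $t_i$ meet modulo $p$ precisely when, for some prime $\mathfrak{P}$ above $p$ in a common number field, $t_0$ and $t_i$ have the same image in $\mathbb{P}^1$ of the residue field. Since $t_0 \in \mathbb{P}^1(\Qq)$, its reduction $\overline{t_0}$ lies in $\mathbb{P}^1(\Ff_p)$; hence $t_0$ meets some branch point modulo $p$ if and only if $\overline{t_0}$ coincides with the reduction modulo some $\mathfrak{P} \mid p$ of some branch point. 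So it suffices to produce $t_0 \in \Qq$ whose reduction $\overline{t_0} \in \mathbb{P}^1(\Ff_p)$ avoids the set $S \subset \mathbb{P}^1(\overline{\Ff_p})$ of all reductions of the branch points $t_1, \dots, t_r$, and for this the key is to bound $|S \cap \mathbb{P}^1(\Ff_p)|$ by $r$.

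Next I would encode the branch points by a single binary form. As $E/\Qq(T)$ is defined over $\Qq$, the branch locus $\{t_1,\dots,t_r\}$ is stable under $\Gal(\Qbar/\Qq)$; writing each $t_i = [a_i : b_i]$ in homogeneous coordinates, the degree-$r$ binary form $\prod_{i=1}^r (b_i X - a_i Y)$ is therefore $\Gal(\Qbar/\Qq)$-invariant up to a scalar, hence proportional to a form $\Phi(X,Y) \in \Qq[X,Y]$. After clearing denominators and dividing by the content I may assume $\Phi \in \ZZ[X,Y]$ is primitive, so its reduction $\overline{\Phi} \in \Ff_p[X,Y]$ is a nonzero binary form of degree $r$. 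For any branch point $t_i$ and any prime $\mathfrak{P} \mid p$, choosing $\mathfrak{P}$-primitive coordinates $[a_i:b_i]$ gives $\Phi(a_i,b_i)=0$ (the $i$-th factor vanishes), so the reduction $[\overline{a_i}:\overline{b_i}]$ is a projective zero of $\overline{\Phi}$. Thus every element of $S$ is a zero of $\overline{\Phi}$ in $\mathbb{P}^1(\overline{\Ff_p})$, and since a nonzero binary form of degree $r$ has at most $r$ projective zeros, $|S| \le r$, a fortiori $|S \cap \mathbb{P}^1(\Ff_p)| \le r$.

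Finally I would conclude by pigeonhole. Since $p \geq r+1$, one has $|\mathbb{P}^1(\Ff_p)| = p+1 \ge r+2$, so $\mathbb{P}^1(\Ff_p) \setminus S$ contains at least two points; at least one of them, say $a$, is finite, i.e.\ lies in $\Ff_p$. Lifting $a$ to an integer $t_0 \in \{0,1,\dots,p-1\} \subset \Qq$ yields a rational point with $\overline{t_0} = a \notin S$, whence $t_0$ meets no branch point of $E/\Qq(T)$ modulo $p$, as desired.

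The only real content is the uniform bound $|S| \le r$; the pigeonhole step is immediate once it is in hand. The delicate points, which the homogeneous-coordinate formulation is designed to absorb, are that a branch point need not be $p$-integral (so that it may reduce to $\infty$) and that $\infty$ may itself be a branch point; working projectively with the primitive integral form $\Phi$ handles all such cases uniformly, and it also makes transparent why the $\Gal(\Qbar/\Qq)$-stability of the branch locus is exactly what guarantees that $\Phi$ can be taken over $\Qq$.
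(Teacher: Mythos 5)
Your proof is correct, and it reaches the key bound by a genuinely different mechanism than the paper. Both arguments reduce the lemma to showing that the set of residues arising from rational points that meet some branch point modulo $p$ has at most $r$ elements, and both finish with the same pigeonhole using $p \ge r+1$. The paper obtains the bound orbit by orbit: it picks representatives $t_1,\dots,t_{r'}$ of the $\GQ$-orbits of branch points and shows, by a direct case analysis on conditions (a) and (b) of Definition~\ref{rencontre}, that an integer $t_0$ meeting $t_i$ modulo $p$ must have $\overline{t_0}$ equal to the reduction of $t_i$ at some prime $\mathfrak{P}$ of $\Qq(t_i)$ above $p$ (condition (b) either collapses to condition (a) or forces $v_{\mathfrak{P}}(t_0)<0$, impossible for $t_0 \in \ZZ$); each orbit therefore contributes at most as many residues as there are primes of $\Qq(t_i)$ above $p$, hence at most $[\Qq(t_i):\Qq]$, and $\sum_{i}[\Qq(t_i):\Qq]=r$. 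You instead bound everything at once by the zero count of a single global object: the primitive integral binary form $\Phi$ of degree $r$ cutting out the branch divisor, whose reduction modulo $p$ is a nonzero degree-$r$ form and so has at most $r$ projective zeros. The two proofs use the same input (the $\GQ$-stability of the branch locus, which the paper invokes to pick orbit representatives and you invoke for descent), but yours absorbs the non-$p$-integrality and $t_i=\infty$ issues into the projective formalism where the paper handles them by the valuation case analysis, and yours identifies the bad set concretely as the $\Ff_p$-zero locus of $\overline{\Phi}$, which is arguably more conceptual. The one step you assert rather than prove is the descent giving $\Phi \in \Qq[X,Y]$; it is standard, and can be made elementary either by normalising one coefficient of $\prod_i (b_i X - a_i Y)$ to $1$ (which kills the scalar cocycle) or by taking $\Phi$ to be $Y^{\epsilon}$ times the product of the homogenised minimal polynomials of the finite orbit representatives, with $\epsilon=1$ exactly when $\infty$ is a branch point.
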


\begin{proof}
We claim that the reduction modulo~$p$ of any $t_0 \in \ZZ$ that meets one of the $r$ branch points lies in a subset of $\FF_p$ of cardinality at most~$r$. This implies the lemma because of the assumption $p > r$.

Let $t_1, \dots, t_{r'}$ be representatives of the branch points of $E/\Qq(T)$ for the action of $\GQ$. By Remark \ref{3.2}(2), it suffices to study the reductions modulo~$p$ of $t_0 \in \ZZ$ meeting $t_i$ for $1\le i \le r'$. Given $i \in \{1, \dots, r'\}$, denote the ring of integers of $\Qq(t_i)$ by $A$. By Remark \ref{3.2}(1), there exists a (non-zero) prime ideal $\mathfrak{P}$ of $A$ containing $p$ such that one of the following conditions holds:

\noindent
{\rm{(1)}} $v_\mathfrak{P}(t_0) \geq 0$, $v_\mathfrak{P}(t_i) \geq 0$ and $v_\mathfrak{P}(t_0-t_i) >0$,

\noindent
{\rm{(2)}} $v_\mathfrak{P}(t_0) \leq 0$, $v_\mathfrak{P}(t_i) \leq 0$ and $v_\mathfrak{P}((1/t_0)-(1/t_i)) >0$.

\noindent
First, assume (1) holds. Then the reduction $\overline{t_0} \in \mathbb{F}_p$ of $t_0$ modulo $p$  has to be equal to the reduction $\overline{t_i} \in A/\mathfrak{P}$ of $t_i$ modulo $\mathfrak{P}$. Now, assume (2) holds. If $v_\mathfrak{P}(t_i)=0$, then one has $v_\mathfrak{P}(t_0) =0$ as well (as $v_\mathfrak{P}((1/t_0)-(1/t_i)) >0$) and $v_\mathfrak{P}(t_0-t_i) = v_\mathfrak{P}((1/t_0)-(1/t_i)) >0$, i.e., (1) holds. One may then assume $v_\mathfrak{P}(t_i)<0$ and, consequently, $v_\mathfrak{P}(t_0)$ is negative as well, which cannot happen as $t_0 \in \Zz$.

This means that, for fixed $1\le i \le r'$, the reduction of $t_0$ modulo~$p$ lies in a subset of $\FF_p$ of cardinality at most the number of prime ideals lying over $p \Zz$, which is at most $[\Qq(t_i):\Qq]$. The claim follows because $\sum_{i=1}^{r'} [\Qq(t_i):\Qq] = r$.
\end{proof}

\begin{defi}
Let $p$ be a prime number. Say that $E/\Qq(T)$ has {\em{vertical ramification}} at $p$ if the prime ideal $p \Zz[T]$ of $\Zz[T]$ ramifies in the integral closure of $\Zz[T]$ in $E$.
\end{defi}

This practical test for non-vertical ramification is well-known (see, e.g., \cite[Addendum 1.4(c)]{DG12}):

\begin{prop} \label{nvr}
Let $p$ be a prime number. Suppose that there exists a monic separable polynomial $P(T,Y) \in \Zz[T][Y]$ that satisfies the following two conditions:

\vspace{0.5mm}

\noindent
{\rm{(1)}} the field $E$ is the splitting field over $\Qq(T)$ of $P(T,Y)$,

\vspace{0.5mm}

\noindent
{\rm{(2)}} the discriminant $\Delta(T) \in \Zz[T]$ of $P(T,Y)$ is not in $p\Zz[T]$.

\vspace{0.5mm}

\noindent
Then the extension $E/\Qq(T)$ has no vertical ramification at $p$.
\end{prop}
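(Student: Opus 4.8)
The plan is to localise the problem at the height-one prime $p\Zz[T]$ and then invoke the standard fact that the splitting field of a polynomial is unramified away from its discriminant, in the form valid over a discrete valuation ring.

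First I would reduce to a local statement. Since $\Zz[T]$ is a regular Noetherian domain and $p\Zz[T]$ is a height-one prime (indeed $\Zz[T]/p\Zz[T] \cong \FF_p[T]$ is a domain), the localisation $R := \Zz[T]_{p\Zz[T]}$ is a discrete valuation ring with uniformiser $p$, fraction field $K := \Qq(T)$ and (imperfect) residue field $\FF_p(T)$. As the formation of integral closure commutes with localisation and ramification indices are preserved, $p\Zz[T]$ ramifies in the integral closure of $\Zz[T]$ in $E$ if and only if $pR$ ramifies in the integral closure of $R$ in $E$. Moreover, condition (2) says exactly that $\Delta(T)$ is a unit in $R$: were $\Delta(T) \in pR$, we would have $s\,\Delta(T) \in p\Zz[T]$ for some $s \in \Zz[T] \setminus p\Zz[T]$, and reducing modulo $p$ in $\FF_p[T]$ would force $\Delta(T) \in p\Zz[T]$. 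Thus it suffices to prove that $E/K$ is unramified over the DVR $R$, where $P(T,Y) \in R[Y]$ is monic and separable with $\mathrm{disc}_Y(P) = \Delta(T) \in R^\times$.

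Next I would factor and use the discriminant. Write $P = P_1 \cdots P_s$ as a product of monic irreducible polynomials over $K$; since $R$ is a unique factorisation domain and $P$ is monic, Gauss's lemma puts the $P_j$ in $R[Y]$. Separability of $P$ makes the factors pairwise coprime, so from $\Delta = \mathrm{disc}(P) = \prod_j \mathrm{disc}(P_j) \cdot \prod_{j<j'} \mathrm{Res}(P_j,P_{j'})^2$, in which every factor lies in $R$ and the total product is a unit, each $\mathrm{disc}(P_j)$ is a unit of $R$. Hence $R[Y]/(P_j)$ is a finite free $R$-algebra of unit discriminant, therefore \'etale over $R$; being \'etale over the regular ring $R$ it is normal, so it coincides with the integral closure of $R$ in $K[Y]/(P_j)$ and is unramified over $R$. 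Each root $\alpha_i$ of $P$ in $E$ has minimal polynomial over $K$ one of the $P_{j}$, and since that factor is monic one gets $R[\alpha_i] \cong R[Y]/(P_{j})$; consequently $K(\alpha_i)/K$ is unramified over $R$.

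Finally I would assemble the splitting field: as $E = K(\alpha_1, \dots, \alpha_d)$ (with $d = \deg_Y P$) is the compositum of the subextensions $K(\alpha_i)/K$, each unramified over $R$, the extension $E/K$ is itself unramified over $R$, because a compositum of finitely many extensions unramified over a fixed discrete valuation ring is again unramified. Together with the first paragraph, this shows that $E/\Qq(T)$ has no vertical ramification at $p$. The step needing the most care is the passage from ``unit discriminant'' to ``unramified'' and its propagation to the compositum: because the residue field $\FF_p(T)$ is imperfect, ``unramified'' must be understood in the strong sense (trivial ramification index together with a separable residue field extension); with this convention both the \'etale criterion for an algebra of unit discriminant and the stability of unramifiedness under compositum are standard and unaffected by the imperfection of the residue field.
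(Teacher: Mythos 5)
Your proof is correct, but there is nothing in the paper to compare it against step by step: the paper does not prove Proposition~\ref{nvr} at all, quoting it as well known with a pointer to \cite[Addendum 1.4(c)]{DG12}. Your argument is the self-contained proof a specialist would supply, and it is essentially the one underlying that reference: localise $\Zz[T]$ at the height-one prime $p\Zz[T]$ to obtain a discrete valuation ring $R$ with fraction field $\Qq(T)$, uniformiser $p$ and residue field $\FF_p(T)$; observe that hypothesis (2) says exactly that $\Delta(T)\in R^{\times}$; factor $P$ into monic irreducibles in $R[Y]$ via Gauss's lemma and use the product formula for discriminants and resultants to see that each irreducible factor has unit discriminant; conclude that each $R[Y]/(P_j)$ is finite \'etale over $R$, hence normal, hence equal to the integral closure of $R$ in $K(\alpha_i)$; and finish by stability of unramifiedness under compositum. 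All the facts you invoke are correct, including the two delicate ones you rightly flag: since $\FF_p(T)$ is imperfect, ``unramified'' must be taken in the strong (\'etale) sense of trivial ramification index together with separable residue extension, and with that convention both the unit-discriminant criterion and the compositum stability hold --- the latter by the usual argument that $S_1\otimes_R S_2$ is finite \'etale over $R$ and the integral closure of $R$ in the compositum is a direct factor of it. Note that your conclusion is in fact slightly stronger than what the paper's definition of vertical ramification literally demands, since you also obtain separability of the residue field extensions. What the paper's citation buys is brevity; what your route buys is a complete argument that makes the imperfect-residue-field subtlety explicit.
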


Finally, we recall the following result, which is part of the ``Specialisation Inertia Theorem" (see \cite[Proposition 4.2]{Bec91} and \cite[\S2.2.3]{Leg16}):

\begin{prop} \label{sit}
Let $p$ be a prime number and $t_0 \in \Qq$. The specialisation of $E/\Qq(T)$ at $t_0$ is unramified at $p$, provided the following two conditions hold:

\vspace{0.5mm}

\noindent
{\rm{(1)}} the extension $E/\Qq(T)$ has no vertical ramification at $p$,

\vspace{0.5mm}

\noindent
{\rm{(2)}} $t_0$ does not meet any branch point of $E/\Qq(T)$ modulo $p$.
\end{prop}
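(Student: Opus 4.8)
The plan is to read the specialisation at $t_0$ geometrically, as the pullback of the branched cover of the arithmetic surface $\PP^1_{\ZZ}$ attached to $E/\Qq(T)$ along the section determined by $t_0$, and then to show that conditions (1) and (2) together force this section to avoid the branch locus in the fibre over $p$, so that the cover is \'etale there. Concretely, set $G=\Gal(E/\Qq(T))$ and let $\mathcal X$ be the normalisation of $\PP^1_{\ZZ}=\mathrm{Proj}\,\ZZ[U,V]$ in $E$, using the chart $T=U/V$ whose affine piece is the integral closure $B$ of $\ZZ[T]$ in $E$. Then $\pi:\mathcal X\to\PP^1_{\ZZ}$ is a finite, generically Galois morphism from a normal two-dimensional scheme to a regular one; by Serre's criterion $\mathcal X$ is Cohen--Macaulay, so $\pi$ is finite flat, and by Zariski--Nagata purity its branch locus $D\subset\PP^1_{\ZZ}$ is closed of pure codimension one, with $\pi$ \'etale over the complement of $D$.

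Next I would decompose $D$ into its horizontal and vertical components and translate the hypotheses into incidence statements. A vertical component over $p$ is one-dimensional inside the fibre $\PP^1_{\FF_p}$ and hence equals all of $\PP^1_{\FF_p}$, its generic point being the height-one prime $p\ZZ[T]$; thus condition (1) (no vertical ramification at $p$) says exactly that $D$ has no vertical component over $p$, so that $D\cap\PP^1_{\FF_p}$ is finite. The horizontal components of $D$ are the Zariski closures in $\PP^1_{\ZZ}$ of the branch points $t_1,\dots,t_r\in\PP^1(\Qbar)$, and the reduction $\overline{t_0}\in\PP^1(\FF_p)$ lies on the closure of $t_i$ precisely when $t_0$ and $t_i$ meet modulo $p$ in the sense of Definition~\ref{rencontre}: its two cases (a)/(b) correspond to the charts $T$ and $1/T$, and Remark~\ref{3.2}(2) matches the passage to $\Qq$-conjugates. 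Hence condition (2) says that $\overline{t_0}$ lies on no horizontal component of $D$.

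To conclude, regard $x_0=\overline{t_0}\in\PP^1_{\FF_p}$ as a codimension-two closed point of $\PP^1_{\ZZ}$. The two previous paragraphs give $x_0\notin D$, and since $D$ is closed there is an open $U\ni x_0$ over which $\pi$ is \'etale. The point $t_0\in\PP^1(\Qq)$ extends by the valuative criterion to a section $s:\mathrm{Spec}\,\ZZ_{(p)}\to\PP^1_{\ZZ}$ sending the closed point to $x_0\in U$ and the generic point to $t_0$, so by continuity $s$ factors through $U$. Base-changing the \'etale cover $\pi^{-1}(U)\to U$ along $s$ produces a finite \'etale $\ZZ_{(p)}$-algebra $R$ with $R\otimes_{\ZZ_{(p)}}\Qq$ a finite product of copies of $E_{t_0}$; since \'etaleness over $\ZZ_{(p)}$ is preserved and means unramified at $p$, the number field $E_{t_0}$ is unramified at $p$, as claimed.

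The crux, and main obstacle, is the purity statement coupled with the precise dictionary between ``meeting modulo $p$'' and incidence of the section with a horizontal branch component. The delicate bookkeeping concerns the cases $t_0=\infty$ or branch points at $\infty$ (handled uniformly on $\PP^1_{\ZZ}$ by switching to the chart $1/T$, which is exactly why Definition~\ref{rencontre} is phrased symmetrically), the possibility $t_0\notin\ZZ$ (absorbed into the proper model $\PP^1_{\ZZ}$, where $\overline{t_0}$ is still a well-defined $\FF_p$-point), and the verification that the \'etale base change genuinely computes the ramification of $E_{t_0}$ at $p$ and not of some larger ring. A more computational alternative would fix a monic integral model $P(T,Y)$ with splitting field $E$ and control ramification through its $Y$-discriminant $\Delta(T)$ as in Proposition~\ref{nvr}; I would avoid it here, since one would need $\Delta(T)\not\equiv 0\pmod p$, and in a non-monogenic situation $\Delta$ can vanish modulo $p$ even without vertical ramification, whereas the purity argument uses condition (1) directly.
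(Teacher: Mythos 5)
The paper offers no proof of this proposition: it is quoted from the literature (\cite[Proposition 4.2]{Bec91}, \cite[\S 2.2.3]{Leg16}), where it is part of the Specialisation Inertia Theorem. So your argument is to be measured against those sources, and it is a genuinely different, essentially correct route. Beckmann and Legrand argue valuation-theoretically, working directly with the primes of the integral closure of $\ZZ[T]$ in $E$ and comparing inertia groups under specialisation; their method yields more than what is stated here, namely the finer conclusion that the inertia group of $E_{t_0}/\Qq$ at $p$ is generated by a conjugate of $g^a$, where $g$ generates the inertia at the branch point met and $a$ is the meeting multiplicity. Your scheme-theoretic argument recovers exactly the unramifiedness statement, which is all that is claimed: once $\pi:\mathcal{X}\to\PP^1_{\ZZ}$ is finite flat (Serre's criterion plus miracle flatness) with branch locus $D$ of pure codimension one, condition (1) removes the vertical component of $D$ over $p$, condition (2) removes the horizontal components through $\overline{t_0}$, and purity (or, given flatness, the fact that $D$ is cut out by the discriminant ideal) excludes isolated codimension-two ramification --- which is precisely why the two conditions suffice. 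Your dictionary between Definition~\ref{rencontre} and incidence of $\overline{t_0}$ with the closure of a branch-point orbit is correct (including the chart-switching for case (b) and the appeal to Remark~\ref{3.2}(2)), and the final base change is sound: since $t_0$ is not itself a branch point (it meets itself modulo any $p$, so (2) forces this), the fibre of $\mathcal{X}_{\Qq}$ over $t_0$ is a product of copies of $E_{t_0}$, and a finite \'etale $\ZZ_{(p)}$-algebra whose generic fibre is such a product forces $E_{t_0}/\Qq$ to be unramified at $p$.

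One step must, however, be made explicit, because it is the only place where the argument could genuinely fail: the asserted equivalence ``no vertical ramification at $p$ $\Leftrightarrow$ $D$ has no vertical component over $p$''. The generic point of $\PP^1_{\FF_p}$ has the \emph{imperfect} residue field $\FF_p(T)$, so ``unramified'' at the height-one prime $p\ZZ[T]$ must be taken in the strong sense: ramification index one \emph{and} separable residue extension. If one reads ``ramifies'' in the paper's definition of vertical ramification as meaning only $e>1$, then your equivalence --- and indeed the proposition itself --- becomes false. Example: $E=\Qq(\sqrt{T})$ over $\Qq(T)$ (which is $\Qq$-regular, Galois of group $\ZZ/2\ZZ$), $p=2$, $t_0=3$. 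The integral closure of $\ZZ[T]$ in $E$ is $\ZZ[\sqrt{T}]$, and $2\ZZ[\sqrt{T}]$ is prime, so $e=1$ at the vertical prime, but the residue extension $\FF_2(\sqrt{T})/\FF_2(T)$ is purely inseparable; the branch points are $0$ and $\infty$, and since $v_2(3)=0$ the point $t_0=3$ meets neither modulo $2$; yet $E_3=\Qq(\sqrt{3})$ is ramified at $2$. So you should state that ``ramifies'' is understood as ``not unramified'' in the standard sense including residue separability --- this is the convention under which the cited literature operates, and it is what the discriminant test of Proposition~\ref{nvr} actually verifies, so all applications in the paper are unaffected. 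With that convention made explicit, your proof is complete.
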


We shall also need the following result, which is a special case of \cite[Proposition 6.3]{KLN19}:

\begin{prop} \label{kln}
Let $E/\Qq(T)$ be a $\Qq$-regular Galois extension, let $t_1, \dots, t_r$ be the branch points of $E/\Qq(T)$ and let $F$ be the compositum of the residue fields $(E(t_1))_{t_1}, \dots, (E(t_r))_{t_r}$ of $E/\Qq(T)$ at $t_1,\dots,t_r$. Moreover, let $p$ be a prime number that is totally split in $F/\Qq$ (avoiding a finite set of prime numbers depending only on $E/\Qq(T)$). Then the decomposition group of $E_{t_0}/\Qq$ at $p$ is cyclic for every $t_0 \in \Qq$.
\end{prop}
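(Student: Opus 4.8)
The plan is to reduce the global assertion to a local computation at $p$ and to use the splitting hypothesis to trivialise the arithmetic data at the branch points that would otherwise obstruct cyclicity. Fix $t_0 \in \Qq$, and fix a prime number $p$ that is totally split in $F/\Qq$ and avoids the (finite) set of primes of bad reduction of $E/\Qq(T)$; the latter condition guarantees that all ramification over the branch points is tame, that the reduced cover is well behaved, and --- since $\Qq(t_i) \subseteq (E(t_i))_{t_i} \subseteq F$ for every $i$, so that $p$ splits completely in each $\Qq(t_i)$ --- that the $\Qq$-conjugates of the branch points reduce to pairwise distinct $\FF_p$-rational points. The decomposition group of $E_{t_0}/\Qq$ at $p$ is the Galois group of the local extension $(E_{t_0}) \otimes_\Qq \Qq_p$ over $\Qq_p$, so it suffices to prove that this local group is cyclic.

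The first, easy, case is when $t_0$ does not meet any branch point of $E/\Qq(T)$ modulo $p$ in the sense of Definition \ref{rencontre}. Then Proposition \ref{nvr} (applied to a polynomial model with good reduction at $p$) ensures that $E/\Qq(T)$ has no vertical ramification at $p$, and Proposition \ref{sit} shows that $E_{t_0}/\Qq$ is unramified at $p$. A finite unramified extension of $\Qq_p$ is generated by its Frobenius, so its Galois group is cyclic and we are done in this case.

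The remaining, and main, case is when the reduction $\overline{t_0} \in \PP^1(\FF_p)$ coincides with the reduction of some branch point. By the distinctness of the reduced branch points noted above, $\overline{t_0}$ can meet the reduction of at most one branch point $t_i$, so the inertia of $E_{t_0}/\Qq$ at $p$ arises from a single tamely ramified point and is cyclic, generated by a conjugate of a power of the canonical inertia generator $g_i$ of $I_i$ (the power being governed by the intersection multiplicity $v_{\mathfrak{P}}(t_0-t_i)$). The splitting hypothesis then enters twice: the residue field $(E(t_i))_{t_i}$ is the field of definition of the tame local monodromy at $t_i$ and hence contains the roots of unity needed to realise the local Kummer cover, so $p$ being totally split in $F \supseteq (E(t_i))_{t_i}$ forces these roots of unity into $\Qq_p$ and simultaneously trivialises the Frobenius of the residual extension $(E(t_i))_{t_i}/\Qq(t_i) \cong D_i/I_i$. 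Writing the reduced cover near $t_i$ in Kummer form $z^{e_i}=(T-t_i)\cdot(\text{unit})$ and specialising at $t_0$ then exhibits the local extension as a cyclic extension of $\Qq_p$, whence the decomposition group is cyclic; since $t_0 \in \Qq$ was arbitrary, this proves the proposition. The delicate part, and the one I expect to require the most care, is precisely this last step: identifying $(E(t_i))_{t_i}$ with the field of definition of the local monodromy and tracking how the cyclic inertia and the (now $\Qq_p$-rational, Frobenius-trivial) residual part combine into a single cyclic group. This is where the good-reduction and tameness hypotheses encoded in the excluded finite set of primes are indispensable.
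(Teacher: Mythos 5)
The paper gives no proof of this proposition at all: it is quoted as a special case of \cite[Proposition 6.3]{KLN19}, so what you are really attempting is a proof of that cited result. Your skeleton is the right one and matches the mechanism behind it: split into the case where $t_0$ meets no branch point modulo $p$, where Proposition \ref{sit} gives that $E_{t_0}/\Qq$ is unramified at $p$ and hence has cyclic decomposition group there, and the case where $\overline{t_0}$ coincides with the reduction of a single branch point $t_i$; and you correctly isolate the two ways the splitting hypothesis enters, namely that $(E(t_i))_{t_i}$ contains $\Qq(t_i,\zeta_{e_i})$, so that $p$ totally split in $F$ forces $\zeta_{e_i}\in\Qq_p$, and that the Frobenius at $p$ in $\Gal((E(t_i))_{t_i}/\Qq(t_i))$ becomes trivial.

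However, the step you defer --- and flag yourself as ``delicate'' --- is not a detail to be tracked; it is the entire content of the cited theorem, and your argument does not close without it. Concretely, you must prove that the residual (unramified) part of the completion of $E_{t_0}$ at a prime above $p$ is generated by the residue field of $(E(t_i))_{t_i}$ at $p$ together with $\zeta_{e_i}$; equivalently, that the decomposition group of $E_{t_0}/\Qq$ at $p$ is generated by Beckmann's inertia generator (a conjugate of a power of $g_i$) and a lift of the residual Frobenius of $(E(t_i))_{t_i}/\Qq(t_i)$, so that total splitness makes the local extension totally and tamely ramified, hence cyclic. This comparison between the $p$-adic completion of the specialised number field and the completion of $E$ at the place $T=t_i$ --- carried out over two-dimensional complete local rings such as $\Zz_p[[T-t_i]]$ under the good-reduction hypotheses --- is precisely what \cite{KLN19} (refining \cite{Bec91}) establishes, and nothing in your sketch substitutes for it: asserting a Kummer form $z^{e_i}=(T-t_i)\cdot(\mathrm{unit})$ for the cover near $t_i$ and ``specialising'' it presupposes exactly this comparison. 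The gap is genuine rather than cosmetic, because the facts you do establish are insufficient on their own: a tamely ramified local Galois extension with cyclic inertia and the relevant roots of unity in the base field need not have cyclic Galois group --- for odd $p$ and $u$ a non-square unit, $\Qq_p(\sqrt{u},\sqrt{p})/\Qq_p$ is tame and abelian, with cyclic inertia and $-1\in\Qq_p$, yet its group is $\Zz/2\Zz\times\Zz/2\Zz$. What rules this out is precisely the unproven identification of the residual extension of $E_{t_0}$ at $p$ with the completion of $(E(t_i))_{t_i}/\Qq(t_i)$, i.e.\ the statement that trivial Frobenius there forces residue degree one for the specialisation.
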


In the sequel, we shall also deal with the local behaviour of specialisations of $E/\Qq(T)$ at the infinite prime. In this context, the following proposition is useful:

\begin{prop} \label{df}
Denote the branch points of $E/\Qq(T)$ by $t_1, \dots, t_r$ and let $t_0 \in \Qq \setminus \{t_1, \dots, t_r\}$.

\vspace{0.5mm}

\noindent
{\rm{(1)}} Suppose $E/\Qq(T)$ has three branch points and ${\rm{Gal}}(E/\Qq(T))$ is not dihedral of order $4$, $6$, $8$, $12$. Then $E_{t_0}/\Qq$ is not totally real.

\vspace{0.5mm}

\noindent
{\rm{(2)}} Suppose there exists a monic separable polynomial $P(T,Y) \in \Qq[T][Y]$ of splitting field $E$ over $\Qq(T)$ and an integer $0 \leq n \leq {\rm{deg}}_Y P-2$ such that $P(t_0,Y)$ is separable and the $n$-th derivative of $P(t_0,Y)$ has at least one complex non-real root. Then $E_{t_0}/\Qq$ is not totally real.
\end{prop}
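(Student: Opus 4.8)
Both parts rest on the same elementary reformulation. Since $t_0$ is not a branch point, $E_{t_0}/\Qq$ is a finite Galois extension, and complex conjugation (for our fixed embedding $\Qbar \hookrightarrow \CC$) induces an element $c_{t_0} \in \Gal(E_{t_0}/\Qq)$. Because $E_{t_0}/\Qq$ is Galois, all its complex embeddings share the same image in $\CC$; hence $E_{t_0}$ is totally real if and only if $c_{t_0}=1$, equivalently $E_{t_0}\subseteq \RR$. So in each part it suffices to produce one non-real element of $E_{t_0}$, i.e. to show $c_{t_0}\neq 1$.

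\textbf{Part (2).} This I would prove directly. By the facts recalled in \S\ref{ssec:ffe}, separability of $P(t_0,Y)$ makes $t_0$ a non-branch point with $E_{t_0}$ equal to the splitting field of $P(t_0,Y)$ over $\Qq$; as $P$ is monic in $Y$, the polynomial $P(t_0,Y)$ has degree $d:=\deg_Y P$ and $E_{t_0}$ is generated by its roots. Suppose for contradiction that $E_{t_0}$ is totally real. By the reformulation above $E_{t_0}\subseteq\RR$, so every root of $P(t_0,Y)$ is real. A real polynomial with only real roots has all its critical points real, and iterating this, the Gauss--Lucas theorem (equivalently, repeated use of Rolle's theorem) shows that for every $k$ the roots of the $k$-th $Y$-derivative lie in the convex hull of the roots of $P(t_0,Y)$, which is a real interval; in particular the $n$-th derivative with respect to $Y$ has only real roots. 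The hypothesis $0\le n\le d-2$ guarantees this derivative has degree $\ge 2$, so the assumed non-real root gives the desired contradiction, and $E_{t_0}$ is not totally real.

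\textbf{Part (1).} Here I would argue through the archimedean analogue of the Specialisation Inertia Theorem. The element $c_{t_0}$ is the image of a complex conjugation, hence an involution or trivial ($c_{t_0}^2=1$), and its conjugacy class in $G:=\Gal(E/\Qq(T))$ is governed by the real topology of the three-branch-point cover. Concretely, fix branch cycle generators $g_1,g_2,g_3$ of the monodromy with $g_1g_2g_3=1$ and $\langle g_1,g_2,g_3\rangle=G$, adapted to the real structure of the branch locus; since complex conjugation permutes the three branch points and fixes at least one, the branch points are either all real or consist of one real point and a complex-conjugate pair. I would then invoke the standard ``half-turn'' description of $c_{t_0}$: on each connected component of $\PP^1(\RR)$ minus the real branch points the class of $c_{t_0}$ is constant, conjugate to the identity, to some $g_i$, or to a product $g_ig_j$, and it changes in a controlled way whenever $t_0$ crosses a real branch point.

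\textbf{The main obstacle} is to convert this topological description into the stated dichotomy: that $c_{t_0}=1$ for some real $t_0$ forces $G$ to be dihedral of order $4,6,8$ or $12$. The plan is to observe that triviality of $c_{t_0}$ on one interval, combined with the jump relations across the (at most three) real branch points and the relation $g_1g_2g_3=1$, pins the $g_i$ down as a reflection-rotation configuration, so that $G=\langle g_1,g_2\rangle$ is generated by two involutions and is therefore dihedral, say with rotation subgroup cyclic of order $n$. A final arithmetic input selects the admissible $n$: a totally real such specialisation can occur only when the relevant cyclotomic data is real, i.e. $[\Qq(\zeta_n):\Qq]=\varphi(n)\le 2$, and a genuine three-branch-point cover requires $n\ge 2$; this leaves exactly $n\in\{2,3,4,6\}$, yielding the four excluded orders. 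Establishing the precise form of the half-turn $c_{t_0}$ (equivalently, the exact archimedean branch-cycle formula) is the technical heart of the argument; once it is in place, both the reduction to dihedral groups and the $\varphi(n)\le 2$ refinement are routine.
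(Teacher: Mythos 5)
Your part (2) is correct and is essentially the paper's own proof: the paper likewise uses the fact from \S\ref{ssec:ffe} that separability of $P(t_0,Y)$ identifies $E_{t_0}$ with the splitting field of $P(t_0,Y)$ over $\Qq$, assumes $E_{t_0}$ totally real so that all roots of $P(t_0,Y)$ are real, and then iterates Rolle's theorem (following \cite[Lemma 2.3]{LSY12}); your Gauss--Lucas packaging of the iteration is an equivalent, marginally cleaner variant, since it avoids having to track separability of the successive derivatives.

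Part (1), however, contains a genuine gap. The paper does not prove this statement at all: it is quoted verbatim as \cite[Proposition 1.2]{DF90}. Your plan is to reprove it via the archimedean description of complex conjugation on fibres, which is indeed the method behind that reference, but you never establish that description: you explicitly defer ``the precise form of the half-turn $c_{t_0}$'', and that formula \emph{is} the mathematical content of the result. Without it, the two steps you call routine are unsupported. Concretely: (i) the claim that $c_{t_0}=1$ on one real interval, together with the jump relations across real branch points and $g_1g_2g_3=1$, forces the branch cycles to be involutions (hence $G$ dihedral) depends on the exact form of those jump relations, which you never write down; (ii) the restriction $\varphi(n)\le 2$ is really an application of the Branch Cycle Lemma over $\Qq$, and the phrase ``the relevant cyclotomic data is real'' is not an argument; (iii) your dihedral reduction tacitly assumes all three branch points are real, whereas in the other configuration you yourself identify (one real branch point and a complex-conjugate pair) the set $\PP^1(\RR)$ minus the real branch points is connected, there are no crossings to exploit, and triviality of $c_{t_0}$ only yields relations of the shape ``$g_1$ conjugate to $g_1^{-1}$, $g_2$ conjugate to $g_3^{-1}$'', from which your ``generated by two involutions'' conclusion does not follow as stated. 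To close the gap you would either have to prove the archimedean analogue of the Specialisation Inertia Theorem in both branch-point configurations (essentially redoing \cite{DF90}) or do what the paper does and simply invoke \cite[Proposition 1.2]{DF90}.
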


\begin{proof}
(1) This is \cite[Proposition 1.2]{DF90}.

\vspace{0.5mm}

\noindent
(2) We reproduce the proof of \cite[Lemma 2.3]{LSY12} in a more general context. Suppose $E_{t_0}/\Qq$ is totally real. Then all roots of $P(t_0,Y)$ are real. As this polynomial is also separable, we obtain, by Rolle's theorem, that the derivative $P'(t_0,Y)$ has at least ${\rm{deg}}_Y P(t_0,Y) -1$ distinct real roots, that is, $P'(t_0,Y)$ is separable and has only real roots. It then suffices to iterate this argument to get a contradiction.
\end{proof}

The following well-known result shows that, to construct specialisations of $E/\Qq(T)$ with full Galois group and with specified local behaviour at finitely many given rational places (possibly infinite), one can look at one prime at a time and we do not have to worry about the corresponding Galois group:

\begin{prop} \label{pv}
Let $\mathcal{S}$ be a finite set of rational places. For each $p \in \mathcal{S}$, fix a Galois extension $F_p/\Qq_p$ \footnote{Set $\Qq_\infty=\mathbb{R}$ if $p =\infty$.} whose Galois group embeds into $G = {\rm{Gal}}(E/\Qq(T))$. Suppose that, for each $p \in \mathcal{S}$, there exists $t_{0,p} \in \Qq$, not a branch point of $E/\mathbb{Q}(T)$, such that $F_p$ is the completion of $E_{t_0,p}$ at $p$. Then there exists $t_0 \in \Qq$ such that ${\rm{Gal}}(E_{t_0}/\Qq)=G$ and, for each $p \in \mathcal{S}$, the field $F_p$ is the completion of $E_{t_0}$ at $p$. Moreover, the set of all extensions $E_{t_0}/\Qq$ with these properties is infinite.
\end{prop}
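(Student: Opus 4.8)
The plan is to turn each prescribed local condition into an \emph{open} condition on $t_0$ (in the relevant completion of $\Qq$), and then to satisfy all of them at once, together with the full-Galois-group requirement, by combining weak approximation with Hilbert's irreducibility theorem; the infinitude is obtained afterwards by a bootstrap that forces a new ramified prime each time. Throughout, fix a monic separable $P(T,Y)\in\Qq[T][Y]$ whose splitting field over $\Qq(T)$ is $E$, and recall from \S\ref{ssec:ffe} that whenever $\Gal(E_{t_0}/\Qq)=G$ the field $E_{t_0}$ is the splitting field of $P(t_0,Y)$ over $\Qq$.

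First I would establish a local continuity statement: for each $p\in\cS$, the completion of $E_{t_0}$ at $p$ is a locally constant function of $t_0$. Indeed, for $t_0\in\Qq$ avoiding the branch points and with $\Gal(E_{t_0}/\Qq)=G$, the completion of $E_{t_0}$ at a place above $p$ is the splitting field of $P(t_0,Y)$ over $\Qq_p$ (completing a splitting field over $\Qq$ gives the splitting field over $\Qq_p$). As $t_0$ varies $p$-adically near $t_{0,p}$ and away from the discriminant locus, the coefficients of $P(t_0,Y)$ vary continuously, so by Krasner's lemma the splitting field of $P(t_0,Y)$ over $\Qq_p$ is constant on a nonempty open neighborhood $U_p\subseteq\Qq_p$ of $t_{0,p}$, hence equal to the splitting field of $P(t_{0,p},Y)$, namely $F_p$. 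For $p=\infty$ the same holds over $\RR$, where $U_\infty$ is an interval about $t_{0,\infty}$ free of the real discriminant locus and the number of real roots of $P(t_0,Y)$ is constant on it, so the completion stays equal to $F_\infty\in\{\RR,\CC\}$.

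Next I would invoke Hilbert's irreducibility theorem: the set $H=\{t_0\in\Qq:\Gal(E_{t_0}/\Qq)=G\}$ contains a Hilbert subset, so its complement is thin. The decisive input is that a thin subset of $\Qq$ has nowhere dense closure in each completion $\Qq_v$; combined with weak approximation (density of $\Qq$ in $\prod_{p\in\cS}\Qq_p$), this makes $H$ dense in $\prod_{p\in\cS}\Qq_p$. Consequently $H\cap\prod_{p\in\cS}U_p$ is nonempty, and in fact infinite. Shrinking the $U_p$ so as to avoid the finitely many branch points, any $t_0$ in this intersection satisfies $\Gal(E_{t_0}/\Qq)=G$ and has completion $F_p$ at every $p\in\cS$, which proves existence. (No compatibility obstruction arises between the global and local requirements, since each $\Gal(F_p/\Qq_p)$ embeds into $G$ by hypothesis and thus sits inside the global group as a decomposition group.)

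Finally, for the infinitude of the set of fields $E_{t_0}$, I would bootstrap the existence statement, since $t_0\mapsto E_{t_0}$ may have infinite fibres (already for quadratic $E/\Qq(T)$) and so counting $t_0$ is not enough. Given any finite list of fields already produced, all of them are unramified outside a fixed finite set $\Sigma$ of primes. The cover has a branch point with nontrivial cyclic inertia (as $G\neq 1$, there are at least two branch points), so by Chebotarev applied to the branch-point data there are infinitely many primes $q\notin\Sigma\cup\cS$ for which some $t_{0,q}\in\Qq$ meets that branch point modulo $q$, forcing $E_{t_{0,q}}$ to ramify at $q$ (the ramified counterpart of Proposition~\ref{sit}); let $F_q$ be the resulting ramified completion. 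Applying the existence statement to $\cS\cup\{q\}$ with this extra condition yields a valid $t_0$ whose field $E_{t_0}$ ramifies at $q\notin\Sigma$, hence is new; iterating gives infinitely many distinct fields. The main obstacle is the step of the third paragraph, namely the compatibility of Hilbert's irreducibility theorem with the imposed local conditions — that deleting a thin set still leaves a set dense in $\prod_{p\in\cS}\Qq_p$; the Krasner continuity of the second paragraph and the ramification bootstrap of the last are comparatively routine once the statements of \S\ref{ssec:ffe} are in hand.
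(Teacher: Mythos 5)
Your architecture --- (i) Krasner-type local constancy of $t_0 \mapsto$ (completion of $E_{t_0}$ at $p$), (ii) density of $H=\{t_0 \in \QQ : \Gal(E_{t_0}/\QQ)=G\}$ in $\prod_{p\in\cS}\QQ_p$, (iii) a bootstrap forcing a new ramified prime at each stage --- is sound in outline. Step (iii) is exactly the paper's own argument for the infinitude: where you invoke ``the ramified counterpart of Proposition~\ref{sit}'', the paper cites \cite[Corollary 2.12]{Leg16} (note that meeting a branch point modulo $q$ forces ramification only when the intersection multiplicity is prime to the ramification index, so that citation, or the full Specialisation Inertia Theorem, is genuinely needed there). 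Steps (i) and (ii) amount to reproving \cite[Proposition 2.1]{PV05}, which the paper simply cites for the existence part.

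However, the lemma you yourself single out as the decisive input for (ii) is false. A thin subset of $\QQ$ can perfectly well have closure with nonempty interior in $\QQ_p$: the set of rational squares is thin (it is the image of $\QQ$ under $y\mapsto y^2$), yet its closure in $\QQ_p$ contains every square in $\QQ_p^{\times}$, an open set. This failure occurs precisely in your setting: for $E=\QQ(\sqrt{T})$ and $G=\ZZ/2\ZZ$, the complement of $H$ is, up to finitely many points, the set of rational squares. So ``nowhere dense closure plus weak approximation'' cannot work. The statement you actually need --- a Hilbert subset of $\QQ$ is dense in $\prod_{p\in\cS}\QQ_p$ --- is a true theorem, but its proof goes through height counting rather than topology: a thin subset of $\PP^1(\QQ)$ has $O(H^{3/2}\log H)$ points of height at most $H$ (Serre's bound), whereas the rationals lying in a nonempty open box of $\prod_{p\in\cS}\QQ_p$ number $\gg H^2$; alternatively one cites the known compatibility of Hilbert's irreducibility theorem with weak approximation (Ekedahl, Fried--Jarden, or \cite[Proposition 2.1]{PV05} itself). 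A secondary caveat in step (i): Krasner gives local constancy only around points where $P(t_{0,p},Y)$ is separable, and the hypothesis of the proposition does not guarantee that $t_{0,p}$ avoids the branch points and the discriminant locus; local constancy genuinely fails at branch points (for $E=\QQ(\sqrt{T})$ and $t_{0,p}=0$, rationals near $0$ of odd $p$-adic valuation give ramified completions while those of even valuation give $\QQ_p$), so this degenerate case needs a separate reduction before your argument starts.
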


\begin{proof}
The existence of at least one specialisation $E_{t_0}/\Qq$ with the above properties can be found in, e.g., \cite[Proposition 2.1]{PV05}. To conclude that there exist infinitely many distinct such extensions $E_{t_0}/\Qq$, it suffices to iterate the above statement, combined with the fact that the set of all prime numbers $p$ for which there exists $t_0 \in \Qq$ such that $p$ ramifies in $E_{t_0}/\Qq$ is infinite (see \cite[Corollary 2.12]{Leg16}).
\end{proof}

\section{Galois families} \label{sec:gal_fam}

A purpose of this text is to propose the following definition, whose part (2) is Definition A from the introduction:

\begin{defi}\label{defi:GF}
Let $I$ be a set (of indices) and, for each $i \in I$, let $f_i$ be a normalised Hecke eigenform (of any level and weight).

\vspace{1mm}

\noindent
{\rm{(1)}} For a prime number~$p$, a positive integer~$n$ and a finite subgroup $G \subset \GL_2(\overline{\mathbb{F}_p})$, we say that the $(f_i)_{i\in I}$ form an {\em $n$-parameter $G$-Galois family} if there exists a finite Galois extension $E/\QQ({\bf{T}})=E/\Qq(T_1, \dots, T_n)$ with Galois group isomorphic to~$G$ such that, for each $i \in I$, the following two conditions hold:

\vspace{0.5mm}

{\rm{(a)}} there is ${{\bf t}}_i \in \QQ^n$ such that $K_{f_i}=E_{{\bf t}_i}$ and

\vspace{0.5mm}

{\rm{(b)}} the image $\Image(\rho_{f_i})$ is conjugate to~$G$ in $\GL_2(\overline{\Ff_p})$ \footnote{Recall that $\Image(\rho_{f_i})$ is uniquely determined up to conjugation by $f_i$.}.

\vspace{1mm}

\noindent
{\rm{(2)}} For a finite subgroup $G \subset \PGL_2(\overline{\mathbb{F}_p})$, we define an {\em $n$-parameter projective $G$-Galois family} exactly as above, with the only exception that we replace $K_{f_i}$ by $K_{f_i}^\proj$ (for $i \in I$).

\vspace{1mm}

\noindent
{\rm{(3)}} When $G \subset \GL_2(\CC)$ (resp., $G \subset \PGL_2(\CC)$) and the $f_i$ for $i \in I$ are holomorphic weight one forms, we make exactly the same definition via the attached Artin representations and call this family an {\em $n$-parameter Artin $G$-Galois family} (resp., an {\em $n$-parameter projective Artin $G$-Galois family}).

\vspace{1mm}

\noindent
{\rm{(4)}} An $n$-parameter (projective) (Artin) $G$-Galois family is called {\em regular} if the underlying extension $E/\Qq({\bf{T}})$ is $\Qq$-regular.
\end{defi}

We remark that the base field $\QQ$ could be replaced by any number field in the definition if both automorphic and field extension sides are changed accordingly. Moreover, we are not explicitly insisting that our Galois families are infinite; any set $I$ is allowed. We are primarily interested in families where infinitely many pairwise non-isomorphic fields occur as $K_{f_i}$ or $K_{f_i}^\proj$.

Note that, by (b) in the definition of Galois families, the images $\Image(\rho_{f_i})$ are all conjugate to the fixed subgroup $G$ of the general linear group. So, by choosing appropriate bases for the representation modules underlying $\rho_{f_i}$ for $i \in I$, we could actually assume that they are {\em equal}.

The field extension $E/\QQ({\bf{T}})$ underlying a Galois family $(f_i)_{i \in I}$ can also be viewed via a Galois representation
$$ \rho: \Gal(\overline{\QQ({\bf{T}})}/\QQ({\bf{T}})) \twoheadrightarrow \Gal(E/\QQ({\bf{T}})) \cong G \subset \GL_2(\overline{\Ff_p}).$$
The representations $\rho_{f_i}$ can then be interpreted as specialisations of $\rho$. Furthermore, letting $H$ be the kernel of $G \to \GL_2(\overline{\Ff_p}) \twoheadrightarrow \PGL_2(\overline{\Ff_p})$ (its image equals $G^\proj$), we have the associated projective Galois representation
$$\rho^\proj: \Gal(\overline{\QQ({\bf{T}})}/\QQ({\bf{T}})) \twoheadrightarrow \Gal(E^H/\QQ({\bf{T}})) \cong G^\proj \subset \PGL_2(\overline{\Ff_p}).$$
Similar statements are true in the Artin case.

Viewing the natural isomorphism between $\Gal(E_{{\bf{t}}_i}/\QQ)$ and $\Gal(E/\QQ({\bf{T}}))$ as equality and considering $H$ as a subgroup of both, we have the equality
$$K_{f_i}^\proj = (K_{f_i})^H = (E_{{\bf{t}}_i})^H = (E^H)_{{\bf{t}}_i}$$
of number fields. It shows that all $K_{f_i}^\proj$ are obtained as specialisations of the extension $E^H/\QQ({\bf{T}})$, and as $\Image(\rho_{f_i})$ is conjugate to $G$, the image $\Image(\rho_{f_i}^\proj)$ is conjugate to~$G^\proj$. This proves this result:

\begin{prop}\label{prop:getreg}
Let $(f_i)_{i \in I}$ be a (regular) $n$-parameter (Artin) $G$-Galois family. Then $(f_i)_{i \in I}$ is a (regular) $n$-parameter projective (Artin) $G^\proj$-Galois family.
\end{prop}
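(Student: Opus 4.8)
The plan is to use, as the field extension witnessing the projective family, the subextension $E^H/\QQ(\mathbf{T})$ cut out by the scalar part of $G$, and to specialise it at the \emph{same} points $\mathbf{t}_i$ that already witness the linear family. Write $H = \ker(G \to G^\proj)$, i.e. $H = G \cap Z$ where $Z$ is the centre (the scalar matrices) of $\GL_2(\overline{\FF_p})$. Since scalars are central in $\GL_2$, we have $H \subseteq Z(G)$, so $H$ is normal in $G$ and $E^H/\QQ(\mathbf{T})$ is Galois with group $G/H \cong G^\proj$. Moreover $E^H \cap \Qbar \subseteq E \cap \Qbar$, so $E^H/\QQ(\mathbf{T})$ inherits $\QQ$-regularity from $E/\QQ(\mathbf{T})$, and it involves the same $n$ indeterminates; this yields the ``regular'' clause and the correct number of parameters for free.

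The core is then to establish, for each $i \in I$, the chain
$$K_{f_i}^\proj = (K_{f_i})^H = (E_{\mathbf{t}_i})^H = (E^H)_{\mathbf{t}_i}.$$
The first equality is the description $K_{\rho^\proj} = (K_\rho)^{\rhobar^{-1}(H)}$ recalled in \S\ref{ssec:gr}; the second is the hypothesis $K_{f_i}=E_{\mathbf{t}_i}$ from Definition \ref{defi:GF}. For the third, note that $\Gal(E_{\mathbf{t}_i}/\QQ) \cong \Image(\rho_{f_i})$ is conjugate to $G = \Gal(E/\QQ(\mathbf{T}))$ and hence has the same order, so $\mathbf{t}_i$ realises the full Galois group; by the material of \S\ref{ssec:ffe} the inertia at the relevant prime is then trivial and the specialisation isomorphism $\Gal(E_{\mathbf{t}_i}/\QQ) \cong G$ matches subfields with subgroups, so the residue field $(E^H)_{\mathbf{t}_i}$ of the subextension $E^H$ is exactly the fixed field $(E_{\mathbf{t}_i})^H$.

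The step needing genuine care — which I expect to be the main obstacle — is that the symbol $H$ enters this chain with two a priori different meanings: in $(K_{f_i})^H$ it is the image of $H$ under $\rhobar_{f_i}^{-1}$, whereas in $(E_{\mathbf{t}_i})^H$ it is the image of $H$ under the specialisation isomorphism, and \S\ref{ssec:gr} explicitly warns that such an identification depends on the representation. The two resulting isomorphisms $\Gal(K_{f_i}/\QQ) \to G$ differ by some $\alpha \in \Aut(G)$, so the two subgroups coincide as soon as $\alpha(H)=H$. Here I would invoke irreducibility: by Schur's lemma the only matrices commuting with an irreducible $G$ are scalars, so $H = G \cap Z = Z(G)$, which is characteristic in $G$; hence $\alpha(H)=H$ for every $\alpha \in \Aut(G)$ and the chain is unambiguous. (Equivalently, since any two such identifications differ by conjugation inside $\GL_2$, which fixes the scalar subgroup $H$ pointwise, the same conclusion holds.) Finally, $\Image(\rho_{f_i}^\proj)$ is the image of $\Image(\rho_{f_i})$ under $\GL_2 \twoheadrightarrow \PGL_2$, hence conjugate to $G^\proj$, so $(f_i)_{i \in I}$ is an $n$-parameter projective $G^\proj$-Galois family, regular if the given family was. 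The Artin case is identical, replacing $\overline{\FF_p}$ by $\CC$ throughout, Schur's lemma being available over $\CC$ as well.
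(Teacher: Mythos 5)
Your proposal is, at its core, the paper's own proof: the paper likewise takes $E^H/\QQ(\mathbf{T})$ as the witnessing extension, keeps the same specialisation points $\mathbf{t}_i$, establishes exactly the chain $K_{f_i}^\proj = (K_{f_i})^H = (E_{\mathbf{t}_i})^H = (E^H)_{\mathbf{t}_i}$, and deduces conjugacy of $\Image(\rho_{f_i}^\proj)$ to $G^\proj$ from conjugacy of $\Image(\rho_{f_i})$ to $G$; your remarks on normality of $H$ and inheritance of $\QQ$-regularity are correct and are left implicit in the paper.

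Where you depart from the paper is the identification of the two meanings of $H$, and there your treatment is half right. The paper does not argue this point at all: it simply ``views the natural isomorphism between $\Gal(E_{\mathbf{t}_i}/\QQ)$ and $\Gal(E/\QQ(\mathbf{T}))$ as equality'', i.e.\ it tacitly reads Definition \ref{defi:GF} as saying that $\rho_{f_i}$ is conjugate to the specialisation at $\mathbf{t}_i$ of the function-field representation $\rho$, which makes the compatibility hold by fiat. Your first argument --- Schur's lemma gives $H = G \cap Z = Z(G)$, which is characteristic, so the discrepancy automorphism $\alpha \in \Aut(G)$ preserves $H$ --- is sound, but it requires $G$ to be irreducible, a hypothesis Proposition \ref{prop:getreg} does not impose (and the paper explicitly allows reducible $\rho_f$). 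Your parenthetical ``equivalently'' is not correct: $\Gal(E/\QQ(\mathbf{T}))$ is only \emph{abstractly} isomorphic to $G$, and the specialisation isomorphism carries no matrix structure, so the two identifications of $\Gal(K_{f_i}/\QQ)$ with $G$ differ by an arbitrary automorphism of the abstract group $G$, not by conjugation inside $\GL_2(\overline{\FF_p})$; nothing in Definition \ref{defi:GF} ties the representation-theoretic isomorphism to the specialisation isomorphism. This matters precisely in the reducible case that Schur's lemma misses: for $G = \{\pm I, \pm\mathrm{diag}(1,-1)\} \cong \Zz/2\Zz \times \Zz/2\Zz$ one has $H = \{\pm I\}$, which is not characteristic, and the automorphism exchanging $-I$ with $\mathrm{diag}(1,-1)$ moves $H$ and hence moves the fixed field. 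So in the irreducible case your write-up is complete --- indeed more careful than the paper's --- while in the reducible case it has a gap; but that gap is exactly the one the paper's own proof silently assumes away, so the defect is inherited from the statement's level of precision rather than introduced by you.
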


The direct converse of the proposition is not true because a given finite subgroup of $\PGL_2(\overline{\Ff_p})$ comes from infinitely many different finite subgroups of $\GL_2(\overline{\Ff_p})$.

\section{The Beckmann-Black Problem, parametric extensions and Galois fa\-mi\-lies} \label{sec:BB}

\subsection{The Beckmann-Black Problem and Galois fa\-mi\-lies}

First, we recall the Beckmann-Black Problem (over $\Qq$), which was intensively studied (see, e.g., the survey paper \cite{Deb01b} for more details and references).

\begin{problem}[Beckmann-Black Problem]\label{prob:BB}
Let $G$ be a finite group. Is it true that every Galois extension $F/\Qq$ with Galois group $G$ occurs as a specialisation of some $\Qq$-regular Galois extension $E/\Qq(T)$ with Galois group $G$ (possibly depending on $F/\Qq$)?
\end{problem}

Let us recall that we take $\cG \in \{\GL_2(\overline{\Ff_p}), \GL_2(\CC)\}$ and $\cP \in \{\PGL_2(\overline{\Ff_p}), \PGL_2(\CC)\}$.
We also remind the reader of our convention that if a representation $\rho: \GQ \to \cG$ (resp., a projective represention $\rho^\proj: \GQ \to \cP$) comes from a normalised Hecke eigenform~$f$, then we assume $f$ to be holomorphic of weight one if we are in the Artin case.

Translating the Beckmann-Black Problem to the language of modular forms leads us to propose the following new problem:

\begin{problem}\label{prob:mf}
Let $G \subset \cG$ (resp., $G \subset \cP$) be a finite subgroup. Does every normalised Hecke eigenform $f$ such that $\Image(\rho_f)$ (resp., $\Image(\rho_f^\proj)$) is conjugate to $G$ belong to some regular $1$-parameter (Artin) $G$-Galois family (resp., some regular $1$-parameter projective (Artin) $G$-Galois family), possibly depending on $f$?
\end{problem}

Note that Proposition \ref{prop:getreg} implies that a positive answer for a given finite subgroup $G \subset \cG$ automatically gives a positive answer for the image $G^\proj$ of $G$ under the natural map $\cG \twoheadrightarrow \cP$. The following proposition makes the gap between Problems \ref{prob:BB} and \ref{prob:mf} precise:

\begin{prop}\label{rem:bbpb1}
Let $G \subset \cG$ (resp., $G \subset \cP$) be a finite irreducible subgroup. The answer to Problem \ref{prob:mf} is affirmative if and only if 

\vspace{0.5mm}

\noindent
- every Galois extension $F$ of $\Qq$ of group $G$ and such that $F^{\text{scalars in }G}$ (resp., $F$) is totally imaginary occurs as a specialisation of a $\Qq$-regular Galois extension of $\Qq(T)$ of group $G$ if we are in the Artin case or in the mod $p$ case with $p \geq 3$,

\vspace{0.5mm}

\noindent
- Problem \ref{prob:BB} has an affirmative answer if we are in the mod $p$ case with $p=2$.
\end{prop}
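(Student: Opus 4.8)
The plan is to prove the equivalence by using Proposition \ref{tool0} as a dictionary between normalised Hecke eigenforms and Galois extensions of $\Qq$, so that the automorphic statement of Problem \ref{prob:mf} becomes a purely field-arithmetic statement about specialisations. First I would unwind what it means for $f$ to \emph{belong to} a regular $1$-parameter $G$-Galois family. By Definition \ref{defi:GF}(1),(4), this holds if and only if there is a $\Qq$-regular Galois extension $E/\Qq(T)$ with $\Gal(E/\Qq(T)) \cong G$ and some $t_0 \in \Qq$ with $K_f = E_{t_0}$ (resp. $K_f^\proj = E_{t_0}$ in the projective case); condition (b) of the definition, that $\Image(\rho_f)$ (resp. $\Image(\rho_f^\proj)$) be conjugate to $G$, is automatic under the hypothesis of Problem \ref{prob:mf}. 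Since $\Gal(K_f/\Qq) \cong G \cong \Gal(E/\Qq(T))$, the equality $K_f = E_{t_0}$ forces the specialisation to realise the full group $G$, so this matches the specialisation statements in the two bullet points verbatim.

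Next I would record the dictionary furnished by Proposition \ref{tool0}. In the Artin case or the mod $p$ case with $p \geq 3$, as $f$ ranges over the eigenforms with $\Image(\rho_f)$ conjugate to the irreducible subgroup $G$, the field $K_f$ ranges over exactly the totally imaginary Galois extensions of $\Qq$ with group $G$: each such $K_f$ has group $G$ and is totally imaginary by oddness (as in the proof of Proposition \ref{tool0}), and conversely every such field is realised as some $K_f$. In the mod $p$ case with $p = 2$ the oddness condition is empty, so $K_f$ ranges over \emph{all} Galois extensions of $\Qq$ with group $G$. The projective statements are identical with $K_f$ replaced by $K_f^\proj$, using the projective half of Proposition \ref{tool0} (and the lift of $\rho^\proj$ via Tate's theorem invoked there).

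Then I would assemble the equivalence. For the forward implication, I start from a totally imaginary (resp. arbitrary, if $p=2$) Galois $G$-extension $F/\Qq$, realise it as $F = K_f$ for a suitable $f$ by the dictionary, apply the affirmative answer to Problem \ref{prob:mf} to this $f$, and conclude that $F$ is a specialisation of a $\Qq$-regular $G$-extension of $\Qq(T)$. For the converse, I start from any $f$ with $\Image(\rho_f)$ conjugate to $G$; the dictionary shows $K_f$ is a totally imaginary (resp. arbitrary) Galois $G$-extension, so by hypothesis $K_f = E_{t_0}$ for some $\Qq$-regular $G$-extension $E/\Qq(T)$, and taking this $E$ as the underlying extension exhibits $f$ in a regular $1$-parameter $G$-Galois family. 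In the case $p=2$ the resulting condition, that every Galois $G$-extension of $\Qq$ be a specialisation of a $\Qq$-regular $G$-extension of $\Qq(T)$, is exactly the Beckmann-Black Problem (Problem \ref{prob:BB}). The projective case runs in parallel throughout, replacing $K_f$ by $K_f^\proj$ and $\rho_f$ by $\rho_f^\proj$.

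Since the argument is a translation, there is no analytic or arithmetic obstacle; the only point needing care is the bookkeeping around Proposition \ref{tool0}, namely checking that ``$\Image(\rho_f)$ conjugate to $G$'' corresponds precisely to ``$\Gal(K_f/\Qq) \cong G$'' and that oddness is exactly what imposes total imaginarity when $p \geq 3$ while vanishing when $p = 2$. This is what accounts for the two bullet points taking different shapes — the imaginarity restriction in the first, and the unrestricted Beckmann-Black Problem in the second.
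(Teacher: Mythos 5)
Your proposal is correct and follows essentially the same route as the paper's own proof: both directions are handled by using Proposition \ref{tool0} as the dictionary between eigenforms with image conjugate to $G$ and (totally imaginary, when oddness bites) Galois $G$-extensions of $\Qq$, after which the equivalence with the specialisation statement (and with Problem \ref{prob:BB} when $p=2$) is a direct translation. The extra bookkeeping you include—that condition (b) of Definition \ref{defi:GF} is automatic under the hypothesis of Problem \ref{prob:mf}, and that $K_f = E_{t_0}$ forces the specialisation to realise the full group $G$—is implicit in the paper's argument and does not change the approach.
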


\begin{proof}
We prove only the general linear case over $\overline{\Ff_p}$ as the proofs in the Artin case and the projective cases are almost identical. First, suppose the answer to Problem \ref{prob:mf} is affirmative. Let $F/\Qq$ be a Galois extension of group $G$ such that $F^{\text{scalars in }G}$ is totally imaginary if $p$ is odd. By Proposition \ref{tool0}, there exists a normalised Hecke eigenform $f$ such that $K_f=F$. Then, from our assumption, there exists a $\Qq$-regular Galois extension of $\Qq(T)$ of group $G$ which specialises to $K_f / \Qq$ at some $t_0 \in \Qq$. As $K_f=F$, we are done. Now, assume every Galois extension $F$ of $\Qq$ of group $G$ such that $F^{\text{scalars in }G}$ is totally imaginary if $p$ is odd occurs as a specialisation of a $\Qq$-regular Galois extension of $\Qq(T)$ of group $G$. Let $f$ be a normalised Hecke eigenform such that $\rho_{f,p}$ has image $G$. If $p$ is odd, by Proposition \ref{tool0}, the field $K_f^{\proj}$ is totally imaginary. Then, from our assumption, there exists a $\Qq$-regular Galois extension of $\Qq(T)$ of group $G$ that specialises to $K_f /\Qq$ at some $t_0 \in \Qq$, thus leading to the desired conclusion.
\end{proof}

If a given Galois extension $F/\Qq$ of group $G$ occurs as a specialisation of some $\Qq$-regular Galois extension $E/\Qq(T)$ of group $G$, Hilbert's irreducibility theorem shows that $F/\Qq$ belongs to an infinite family of specialisations of $E/\Qq(T)$ of group $G$. Below we show that the same conclusion holds in the context of modular forms:

\begin{prop}\label{prop:bbpb2}
Let $f$ be a normalised cuspidal Hecke eigenform. Suppose $\rho_{f}$ (resp., $\rho_f^\proj$) is irreducible with image conjugate to $G \subset \cG$ (resp., $G \subset \cP$). If $f$ belongs to a regular 1-parameter (Artin) $G$-Galois family (resp., a regular 1-parameter projective (Artin) $G$-Galois family), then $f$ belongs to an infinite regular $1$-parameter (Artin) $G$-Galois family (resp., an infinite regular 1-parameter projective (Artin) $G$-Galois family).
\end{prop}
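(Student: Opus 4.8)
The plan is to produce, for each of infinitely many rational parameter values $t$, a normalised Hecke eigenform $f_t$ with $K_{f_t}=E_t$ (resp.\ $K_{f_t}^\proj=E_t$) and with $\Image(\rho_{f_t})$ (resp.\ $\Image(\rho_{f_t}^\proj)$) conjugate to $G$, all attached to the \emph{same} one-parameter extension $E/\Qq(T)$ that already witnesses the hypothesis. Here $E/\Qq(T)$ denotes the $\Qq$-regular Galois extension of group $G$ for which $K_f=E_{t_0}$ (resp.\ $K_f^\proj=E_{t_0}$) at some $t_0\in\Qq$, coming from the assumption that $f$ lies in a regular $1$-parameter family; note that $\Gal(E_{t_0}/\Qq)\cong G$ since $\Image(\rho_f)$ is conjugate to $G$. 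Since $\rho_f$ is irreducible, $G$ is an irreducible subgroup of $\cG$ (resp.\ $\cP$), which is the standing hypothesis of Proposition \ref{tool0}. I treat the linear and projective cases, and the Artin and mod~$p$ cases, in parallel, as the argument is identical once the appropriate instance of Proposition \ref{tool0} is used.

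First I would record that $E_{t_0}=K_f$ (resp.\ $K_f^\proj$) is totally imaginary whenever we are in the Artin case or in the mod~$p$ case with $p\ge 3$: this is exactly the oddness of $\rho_f$ recalled in \S\ref{ssec:gr}. Consequently complex conjugation is a nontrivial involution of $\Gal(E_{t_0}/\Qq)\cong G$, so $\Gal(\CC/\RR)\cong\ZZ/2\ZZ$ embeds into $G$, and the completion of $E_{t_0}$ at the infinite place is $\CC$. The core step is then to invoke Proposition \ref{pv} with $\mathcal{S}=\{\infty\}$, $F_\infty=\CC$ and $t_{0,\infty}=t_0$: it produces infinitely many pairwise distinct $t\in\Qq$ such that $\Gal(E_t/\Qq)=G$ and the completion of $E_t$ at $\infty$ equals $\CC$, i.e.\ such that $E_t/\Qq$ is totally imaginary.

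For each such $t$, the extension $E_t/\Qq$ is a totally imaginary Galois extension of group $G$, so Proposition \ref{tool0} (in the relevant one of its forms) furnishes a normalised Hecke eigenform $f_t$ with $K_{f_t}=E_t$ (resp.\ $K_{f_t}^\proj=E_t$) and with $\Image(\rho_{f_t})$ (resp.\ $\Image(\rho_{f_t}^\proj)$) conjugate to $G$. Together with $f$ itself, viewed as the member at $t_0$, the forms $f_t$ satisfy conditions (a) and (b) of Definition \ref{defi:GF} relative to the single extension $E/\Qq(T)$, so they form a regular $1$-parameter (projective) (Artin) $G$-Galois family containing $f$; since the $E_t$ are pairwise distinct, infinitely many pairwise non-isomorphic fields occur among the $K_{f_t}$ (resp.\ $K_{f_t}^\proj$), and the family is infinite. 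When $p=2$, oddness is vacuous and $E_{t_0}$ need not be totally imaginary; there I would instead apply Proposition \ref{pv} with $\mathcal{S}=\emptyset$ to obtain infinitely many $t$ with $\Gal(E_t/\Qq)=G$, and invoke the $p=2$ case of Proposition \ref{tool0}, which imposes no reality condition.

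The main obstacle is the reality constraint: Proposition \ref{tool0} manufactures an eigenform out of a field extension only when that extension meets the oddness-dictated totally imaginary condition, and plain Hilbert irreducibility does not by itself guarantee that infinitely many full-group specialisations stay totally imaginary. Controlling the archimedean completion through Proposition \ref{pv} is precisely what circumvents this difficulty, and it is the one genuinely non-formal ingredient beyond Definition \ref{defi:GF} and Proposition \ref{tool0}.
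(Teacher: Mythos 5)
Your proposal is correct and takes essentially the same approach as the paper: the paper's proof likewise passes to the underlying extension $E/\Qq(T)$, uses Proposition \ref{pv} to produce infinitely many distinct totally imaginary specialisations of group $G$ when $p$ is odd (or in the Artin case), handles $p=2$ by Hilbert's irreducibility theorem (your use of Proposition \ref{pv} with $\mathcal{S}=\emptyset$ amounts to the same), and then applies Proposition \ref{tool0} to realise each specialisation as $K_{f_t}$ (resp.\ $K_{f_t}^\proj$).
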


\begin{proof}
We prove only the general linear case over $\overline{\Ff_p}$ as the proofs in the Artin case and the projective cases are almost identical. The extension $K_f/\QQ$ is Galois with group $G = \Image(\rho_f) \subset \GL_2(\overline{\Ff_p})$. By assumption, there exists a $\Qq$-regular Galois extension $E/\QQ(T)$ of group $G$ giving rise to $K_f/\Qq$ by specialisation at some $t_0 \in \QQ$. By Hilbert's irreducibility theorem, infinitely many distinct Galois extensions of $\Qq$ of group $G$ occur as specialisations of $E/\Qq(T)$. Hence, by Proposition \ref{tool0}, we get the desired conclusion if $p=2$. If $p$ is odd, then $K_f^{\proj}$ is totally imaginary by Proposition \ref{tool0}. Proposition~\ref{pv} applied simultaneously to $E/\mathbb{Q}(T)$ and $E^{\text{scalars in }G}/\mathbb{Q}(T)$ then provides infinitely many distinct  Galois extensions of $\Qq$ of group $G$ such that their subfields fixed by the scalars in $G$ are totally imaginary occuring as specialisations of $E/\Qq(T)$. As in the case $p=2$, we apply Proposition \ref{tool0} to conclude.
\end{proof}

\subsection{Parametric extensions and Galois families}

Let us now state the following definition, which is a function field analogue of the classical notion of 'parametric polynomial' as defined in \cite[Definition 0.1.1]{JLY02} (recalled as Definition \ref{def:par_gen2}):

\begin{defi} \label{def:par_gen}
Let ${\bf{T}}=(T_1, \dots, T_n)$ be an $n$-tuple of algebraically independent indeterminates ($n \geq 1$) and $E/\Qq({\bf{T}})$ a finite Galois extension of group $G$. Say that $E/\Qq({\bf{T}})$ is {\em{parametric}} if every Galois extension $F/\Qq$ of group $G$ occurs as the specialisation $E_{\bf{t}}/\Qq$ of $E/\Qq({\bf{T}})$ at some ${\bf{t}} \in \Qq^n$.
\end{defi}

Translating the notion of parametric extension to the language of modular forms leads us to propose the following new definition:

\begin{defi} \label{def:para_mf}
Let $n$ be a positive integer and $G \subset \cG$ (resp., $G \subset \cP$) a finite subgroup. An $n$-parameter (Artin) $G$-Galois family (resp., an $n$-parameter projective (Artin) $G$-Galois family) $(f_i)_{i \in I}$ is called {\em parametric} if, for any normalised Hecke eigenform $f$ such that the image $\Image (\rho_{f})$ is conjugate to~$G$ in $\cG$ (resp., the image $\Image (\rho_{f}^\proj)$ is conjugate to~$G$ in $\cP$), there is $i \in I$ such that $K_{f_i} = K_f$ (resp., $K_{f_i}^\proj = K_f^\proj$).
\end{defi}

The following proposition is the analogue of Proposition \ref{rem:bbpb1} in the parametric context:

\begin{prop} \label{pp->pp_mf}
Let $n$ be a positive integer and $G \subset \cG$ (resp., $G \subset \cP$) an irreducible finite subgroup. Then there is a parametric $n$-parameter (Artin) $G$-Galois family (resp., a parametric $n$-parameter projective (Artin) $G$-Galois family) if and only if 

\vspace{0.5mm}

\noindent
- there is a Galois extension $E/\Qq(T_1,\dots, T_n)=E/\Qq({\bf{T}})$ of group $G$ such that every Galois extension $F$ of $\Qq$ of group $G$ satisfying that $F^{\text{scalars in }G}$ (resp., $F$) is totally imaginary occurs as a specialisation of $E/\Qq({\bf{T}})$ if we are in the Artin case or in the mod $p$ case with $p \geq 3$,

\vspace{0.5mm}

\noindent
- there is a Galois extension $E/\Qq(T_1,\dots, T_n)=E/\Qq({\bf{T}})$ of group $G$ that is parametric if we are in the mod $p$ case with $p=2$.

\vspace{0.5mm}

\noindent
Moreover, the (projective) (Artin) $G$-Galois family is regular if and only if  there is $E/\Qq({\bf{T}})$ as above which, in addition, is $\Qq$-regular.
\end{prop}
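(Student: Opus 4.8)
The plan is to mirror the proof of Proposition \ref{rem:bbpb1} almost verbatim, the only change being that the single extension/eigenform is replaced by the requirement that \emph{all} relevant objects be hit. As there, I would prove only the general linear case over $\overline{\Ff_p}$, since the Artin case (invoking Theorem \ref{thm:Serre-Artin} instead of Theorem \ref{thm:Serre-Conj}) and the two projective cases (invoking Tate's lifting theorem, as packaged into Proposition \ref{tool0}) are settled by identical arguments. The entire proof rests on the dictionary furnished by Proposition \ref{tool0}: for an irreducible finite $G \subset \cG$, the totally imaginary Galois extensions $F/\Qq$ of group $G$ (all Galois extensions of group $G$ when $p=2$) are exactly the fields of the form $K_f$ for a normalised Hecke eigenform $f$ with $\Image(\rho_f)$ conjugate to $G$.

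For the forward implication I would suppose a parametric $n$-parameter $G$-Galois family $(f_i)_{i\in I}$ is given and let $E/\Qq({\bf{T}})$ be the underlying extension of group $G$, so that $K_{f_i}=E_{{\bf{t}}_i}$ for each $i$. Let $F/\Qq$ be any Galois extension of group $G$, assumed totally imaginary when $p\geq 3$. By Proposition \ref{tool0} there is a normalised Hecke eigenform $f$ with $K_f=F$ and $\Image(\rho_f)$ conjugate to $G$. Parametricity then yields $i\in I$ with $K_{f_i}=K_f=F$, whence $F=E_{{\bf{t}}_i}$ is a specialisation of $E/\Qq({\bf{T}})$. This is exactly the asserted condition, which reduces to the full parametricity of $E/\Qq({\bf{T}})$ in the sense of Definition \ref{def:par_gen} when $p=2$, since then no oddness constraint is present.

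For the converse I would take $E/\Qq({\bf{T}})$ of group $G$ realising every totally imaginary Galois extension of group $G$ (every such extension if $p=2$) as a specialisation, and manufacture the family directly. I would index by the set of ${\bf{t}}\in\Qq^n$ for which $\Gal(E_{{\bf{t}}}/\Qq)\cong G$ and $E_{{\bf{t}}}$ is totally imaginary (dropping the latter clause if $p=2$), and for each such ${\bf{t}}$ choose, via Proposition \ref{tool0}, a normalised Hecke eigenform $f_{{\bf{t}}}$ with $K_{f_{{\bf{t}}}}=E_{{\bf{t}}}$ and $\Image(\rho_{f_{{\bf{t}}}})$ conjugate to $G$. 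By construction this is an $n$-parameter $G$-Galois family, conditions (a) and (b) of Definition \ref{defi:GF} holding by the choice of index set and of $f_{{\bf{t}}}$. To see it is parametric, I would take any normalised Hecke eigenform $f$ with $\Image(\rho_f)$ conjugate to $G$; then $\Gal(K_f/\Qq)\cong G$, and $K_f$ is totally imaginary when $p\geq 3$ by Proposition \ref{tool0}. The hypothesis on $E/\Qq({\bf{T}})$ makes $K_f=E_{{\bf{t}}_0}$ for some ${\bf{t}}_0$ in the index set, so $K_{f_{{\bf{t}}_0}}=K_f$, as required.

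Finally, for the ``moreover'' clause I would simply track $\Qq$-regularity through both implications: by Definition \ref{defi:GF}(4) a family is regular precisely when its underlying $E/\Qq({\bf{T}})$ is $\Qq$-regular, so the extension extracted in the forward direction is $\Qq$-regular exactly when the family is, and the family built in the converse is regular exactly when the chosen $E/\Qq({\bf{T}})$ is. I do not anticipate a serious obstacle, since the argument is a formal transfer along Proposition \ref{tool0}; the only points that need care are bookkeeping ones, namely restricting the index set in the converse to specialisations with \emph{full} group $G$ so that condition (b) on the images genuinely holds, and keeping the oddness/total-imaginarity dichotomy ($p\geq 3$ versus $p=2$) aligned between the automorphic and field-arithmetic sides.
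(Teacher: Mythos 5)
Your proposal is correct and follows essentially the same route as the paper: both directions are a formal transfer along Proposition \ref{tool0}, with the same $p\geq 3$ versus $p=2$ dichotomy and the same reduction to one representative case. The only cosmetic difference is in the converse, where the paper simply takes the family of \emph{all} normalised Hecke eigenforms with image conjugate to $G$ (which is then trivially parametric), whereas you index by specialisation points and pick one eigenform per point, requiring a short extra verification of parametricity that you carry out correctly.
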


\begin{proof}
We prove only the general linear case for $p \geq 3$ as the proofs in all other cases are almost identical. For an arbitrary $n$-parameter $G$-Galois family $(f_i)_{i \in I}$ with underlying function field extension $E/\Qq({\bf{T}})$, Proposition \ref{tool0} provides
$$\mathcal{S}_1:=\{K_{f_i}/\Qq \, : \, i \in I\} \subseteq 
  \mathcal{S}_2:= \{E_{\bf{t}}/\Qq \, : \, {\bf{t}} \in \Qq^n, \, {\rm{Gal}}(E_{\bf{t}}/\Qq)=G, \, {\rm{and}} \,  E_{\bf{t}}^{\text{scalars in }G} \, {\rm{totally}} \, {\rm{imaginary}}\}$$
$$\hspace{2cm} \subseteq \mathcal{S}_3:=\{F/\Qq \, : \, {\rm{Gal}}(F/\Qq)=G \, {\rm{and}} \,  F^{\text{scalars in }G} \, {\rm{totally}} \, {\rm{imaginary}}\}.$$
Moreover, $\mathcal{S}_3$ is equal to 
$$\mathcal{S}_4:=\{K_f/\Qq \, : \, f \, {\rm{normalised}} \,  {\rm{Hecke}} \,  {\rm{eigenform}} \,  {\rm{with}} \,  {\rm{im}}(\rho_f) \, {\rm{conjugate}} \, {\rm{to}} \, G \, {\rm{in}} \, {\rm{GL}}_2(\overline{\Ff_p})\}.$$
In particular, if $(f_i)_{i \in I}$ is parametric, then $\mathcal{S}_1 = \mathcal{S}_4$. Consequently, one has $\mathcal{S}_2 = \mathcal{S}_3$, as needed.

Conversely, suppose there exists a Galois extension $E/\Qq(T_1, \dots, T_n)$ of group $G$ such that every Galois extension $F$ of $\Qq$ of group $G$ satisfying that $F^{\text{scalars in }G}$  is totally imaginary occurs as a specialisation of $E/\Qq(T_1, \dots, T_n)$. Let $(f_i)_{i \in I}$ be the family of all normalised Hecke eigenforms such that $\Image(\rho_f)$ is conjugate to $G$ in ${\rm{GL}}_2(\overline{\Ff_p})$. By Proposition \ref{tool0} and our assumption, $(f_i)_{i \in I}$ is an $n$-parameter $G$-Galois family, which is trivially parametric.
\end{proof}

Given a finite group $G$, it is well-known that, if there exists a $\Qq$-parametric polynomial $P({\bf{T}},Y) \in \Qq[{\bf{T}}][Y]$ of group $G$ such that $E/\Qq({\bf{T}})$ is $\Qq$-regular, where $E$ is the splitting field over $\Qq({\bf{T}})$ of $P({\bf{T}},Y)$, then the Beckmann-Black Problem has a positive answer for the group $G$ (see, e.g., \cite[Proposition 3.3.10]{JLY02}). Below we show that the same conclusion holds in the context of modular forms:

\begin{prop} \label{prop:going_up}
Let $G \subset \cG$ or $G \subset \cP$ be a finite subgroup. If there exists a regular parametric $n$-parameter (projective) $G$-Galois family (for some $n \geq 1$), then the answer to Problem \ref{prob:mf} is affirmative.
\end{prop}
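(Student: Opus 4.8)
The plan is to reduce the given $n$-parameter situation to a single parameter by restricting the underlying function field extension to a generic line, and then to exhibit the prescribed eigenform as a member of a one-element family. I treat the linear case $G \subset \cG$; the projective and Artin cases run verbatim, replacing $K_{f}$ by $K_{f}^\proj$ throughout and $E$ by the corresponding $\Qq$-regular extension of group $G \subset \cP$. Let $(f_i)_{i \in I}$ be the given regular parametric $n$-parameter $G$-Galois family, with underlying $\Qq$-regular Galois extension $E/\Qq(\mathbf{T}) = E/\Qq(T_1, \dots, T_n)$ of group $G$. Fix a normalised Hecke eigenform $f$ with $\Image(\rho_f)$ conjugate to $G$; then $K_f/\Qq$ is Galois with $\Gal(K_f/\Qq) \cong G$. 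First I would invoke parametricity (Definition \ref{def:para_mf}) to find $i \in I$ with $K_{f_i} = K_f$, and then Definition \ref{defi:GF} to produce $\mathbf{t} \in \Qq^n$ with $E_{\mathbf{t}} = K_{f_i} = K_f$ and $\Gal(E_{\mathbf{t}}/\Qq) = G$.

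Next I would cut $E/\Qq(\mathbf{T})$ down to one parameter. Fix a monic separable $P(\mathbf{T}, Y) \in \Qq[\mathbf{T}][Y]$ of splitting field $E$ over $\Qq(\mathbf{T})$; by the specialisation dictionary recalled in \S\ref{ssec:ffe}, the fact that $\mathbf{t}$ yields the full group $G$ means $\mathbf{t}$ avoids the relevant branch/discriminant locus, so $P(\mathbf{t}, Y)$ is separable and $E_{\mathbf{t}}$ is its splitting field over $\Qq$. For a direction $\mathbf{b} \in \Qq^n$ I would consider the substitution $\mathbf{T} = \mathbf{t} + T\mathbf{b}$ and the one-variable polynomial $Q_{\mathbf{b}}(T,Y) := P(\mathbf{t} + T\mathbf{b}, Y) \in \Qq[T][Y]$, whose splitting field over $\Qq(T)$ I denote $E'_{\mathbf{b}}$. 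The key point is that, by a Bertini--Noether argument, for all $\mathbf{b}$ outside a proper Zariski-closed subset of the space of directions the extension $E'_{\mathbf{b}}/\Qq(T)$ is still $\Qq$-regular with Galois group $G$; since $\Qq$ is infinite, such a $\Qq$-rational direction $\mathbf{b}$ exists. Because $Q_{\mathbf{b}}(0,Y) = P(\mathbf{t}, Y)$ is separable with splitting field $E_{\mathbf{t}} = K_f$ of group $G$, the material of \S\ref{ssec:ffe} then gives $(E'_{\mathbf{b}})_{0} = K_f$, so $K_f$ is a specialisation of the $\Qq$-regular extension $E' := E'_{\mathbf{b}}$ of $\Qq(T)$ of group $G$.

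Finally I would observe that $f$, viewed as the one-element family indexed by $I' = \{f\}$, meets every requirement of Definition \ref{defi:GF} relative to $E'/\Qq(T)$: condition (a) holds since $K_f = E'_0$ for $t_0 = 0 \in \Qq$, and condition (b) holds since $\Image(\rho_f)$ is conjugate to $G$ by hypothesis. As $E'/\Qq(T)$ is $\Qq$-regular, this displays $f$ as a member of a regular $1$-parameter $G$-Galois family, which is precisely the affirmative answer Problem \ref{prob:mf} demands (and, when $f$ is cuspidal with irreducible $\rho_f$, Proposition \ref{prop:bbpb2} would further enlarge this to an infinite family). The one genuinely delicate step is the Bertini--Noether reduction: one must check that a generic $\Qq$-rational line through $\mathbf{t}$ preserves geometric irreducibility, hence $\Qq$-regularity, and the full geometric Galois group, so that the arithmetic Galois group of $E'/\Qq(T)$, squeezed between the geometric group and $G$, equals $G$; everything else is formal bookkeeping with the specialisation results of \S\ref{ssec:ffe}.
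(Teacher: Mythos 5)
Your overall strategy --- cutting the $n$-parameter extension down to one parameter by restricting to a generic line through $\mathbf{t}$ --- is genuinely different from the paper's argument and can be made to work, but as written it has a real gap at exactly the step you flag as delicate. The Bertini--Noether theorem asserts that absolute irreducibility of a polynomial survives specialisation of parameters outside a proper Zariski-closed set (and even to invoke it here you must first check that $P(\mathbf{t}+T\mathbf{B},Y)$ is absolutely irreducible in the variables $(\mathbf{B},T,Y)$, which is true but needs an argument). It does \emph{not} assert that the Galois group of the splitting field of the restricted polynomial is still $G$, nor that this splitting field has trivial constants: irreducibility of $Q_{\mathbf{b}}$ only says the monodromy is transitive, and a priori the group could drop to a proper transitive subgroup of $G$. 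The missing device is a choice you never make: take $P(\mathbf{T},Y)$ to be the minimal polynomial of a primitive element $\theta$ of $E/\Qq(\mathbf{T})$, so that $\deg_Y P=|G|$ and $P$ is absolutely irreducible by $\Qq$-regularity. Then the roots of $Q_{\mathbf{b}}$ are the reductions of the conjugates of $\theta$ modulo a prime above the ideal of the line, so the splitting field $E'_{\mathbf{b}}$ embeds into the corresponding residue field, whose degree over $\Qq(T)$ is at most $|G|$; absolute irreducibility of $Q_{\mathbf{b}}$ gives degree at least $|G|$, and the squeeze forces $\Gal(E'_{\mathbf{b}}/\Qq(T))\cong G$, trivial inertia, and $E'_{\mathbf{b}}\cap\overline{\Qq}=\Qq$ all at once. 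Without this (or a Lefschetz--Zariski $\pi_1$-surjectivity theorem for generic lines through a \emph{fixed} point, which is not ``Bertini--Noether'' either), your key claim is unproven. A second, smaller imprecision: $\Gal(E_{\mathbf{t}}/\Qq)=G$ does imply $\mathbf{t}$ is not a branch point, but not that $P(\mathbf{t},Y)$ is separable --- the discriminant locus of a particular polynomial can strictly contain the branch locus --- so $\theta$ must also be chosen with pairwise distinct reductions at $\mathbf{t}$.

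For comparison, the paper's proof avoids genericity entirely by a two-point interpolation. Besides $\alpha$ with $E_\alpha=K_f$, it fixes $\beta\in\Qq^n$ with $\Gal(E_\beta/\Qq)=G$ and $E_\beta$ linearly disjoint from $E_\alpha$ (such $\beta$ exists by $\Qq$-regularity), chooses polynomials $a_i(T)$ with $a_i(0)=\alpha_i$ and $a_i(1)=\beta_i$, and restricts $E(T)/\Qq(T)(T_1,\dots,T_n)$ along $\mathbf{T}=\mathbf{a}(T)$. That the specialisation at $0$ is $E_\alpha$, of full group $G$, forces the restricted extension to have group $G$; that the specialisations at $0$ and $1$ are linearly disjoint forces the constant field into $E_\alpha\cap E_\beta=\Qq$, i.e., regularity. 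These are precisely the two facts your Bertini--Noether step was supposed to deliver, obtained instead by one extra, carefully chosen interpolation point and elementary commutative algebra. If you repair your argument as indicated, both routes are valid; the paper's is more self-contained, while yours restricts along an honest line at the cost of a genericity theorem plus the primitive-element trick.
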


\begin{proof}
We prove only the general linear case over $\overline{\Ff_p}$ as the proofs of the other cases are almost identical. Let $E/\Qq({\bf{T}}) = E/\Qq(T_1, \dots, T_n)$ be the $\Qq$-regular Galois extension of group $G$ underlying the regular parametric $n$-parameter $G$-Galois family from the statement and let $f$ be a normalised Hecke eigenform such that the image of $\rho_{f,p}$ is conjugate to $G$ in ${\rm{GL}}_2(\overline{\Ff_p})$. Pick ${\bf{\alpha}}=(\alpha_1, \dots, \alpha_n) \in \Qq^n$ such that the number field $K_f$ is the specialised field $E_{\bf{\alpha}}$. We also fix ${\bf{\beta}}=(\beta_1, \dots, \beta_n) \in \Qq^n$ such that $E_\beta/\Qq$ has Galois group $G$ and such that the fields $E_\alpha$ and $E_\beta$ are linearly disjoint over $\Qq$; such a $\beta$ exists as $E/\Qq({\bf{T}})$ is $\Qq$-regular. Now, given a new indeterminate $T$, for each $i \in \{1, \dots, n\}$, we fix $a_i(T) \in \Qq[T]$ such that $a_i(0)=\alpha_i$ and $a_i(1)=\beta_i$. We set ${\bf{a}} = (a_1(T), \dots, a_n(T))$. Consider the $\Qq(T)$-regular Galois extension $E(T)/\Qq(T)(T_1, \dots, T_n)$ of group $G$ and its specialisation $(E(T))_{\bf{a}} / \Qq(T)$ at ${\bf{a}}$. Below we show that the specialisation of $(E(T))_{\bf{a}} / \Qq(T)$ at 0 (resp., at 1) is the extension $E_\alpha/\Qq$ (resp., $E_\beta/\Qq$). Consequently, the extension $(E(T))_{\bf{a}} / \Qq(T)$ has Galois group $G$ (since this holds for $E_\alpha/\Qq$) and, as $E_\alpha \cap E_\beta = \Qq$, the extension $(E(T))_{\bf{a}} / \Qq(T)$ is $\Qq$-regular.

Let $B$ be the integral closure of $\Qq[{\bf{T}}]$ in $E$, $\mathfrak{P}$ a maximal ideal of the integral closure of $\Qq(T)[{\bf{T}}]$ in $E(T)$ lying over $\langle T_1 - a_1(T), \dots, T_n - a_n(T) \rangle$, $\widetilde{B}$ the integral closure of $\Qq[T]$ in $(E(T))_{\bf{a}}$ and $\mathfrak{P}_0$ a maximal ideal of $\widetilde{B}$ lying over $\langle T \rangle$. Since the reduction modulo $\mathfrak{P}$ of an element of $B$ yields an element of $\widetilde{B}$, we get a well-defined homomorphism
$$\psi : B \rightarrow \widetilde{B} / \mathfrak{P}_0.$$
Moreover, since $a_i(0)=\alpha_i$ for every $i \in \{1, \dots, n\}$, one has 
$$\langle T_1 - \alpha_1, \dots, T_n - \alpha_n \rangle \subseteq {\rm{ker}}(\psi) \cap \Qq[{\bf{T}}],$$
that is, 
$$\langle T_1 - \alpha_1, \dots, T_n - \alpha_n \rangle = {\rm{ker}}(\psi) \cap \Qq[{\bf{T}}],$$
as the ideal in the left-hand side is maximal and $\Qq[{\bf{T}}] \not \subseteq {\rm{ker}}(\psi)$. Consequently, the ideal ${\rm{ker}}(\psi)$ of $B$ lies over $\langle T_1 - \alpha_1, \dots, T_n - \alpha_n \rangle$ and it is maximal. One then has
$$E_\alpha = B/{\rm{ker}}(\psi) \subseteq \widetilde{B}/\mathfrak{P}_0 = ((E(T))_{\bf{a}})_0.$$
As the field in the left-hand side has degree $|G|$ over $\Qq$ and that in the right-hand side has degree at most $|G|$ over $\Qq$, we get the desired equality $E_\alpha = ((E(T))_{\bf{a}})_0.$ Similarly, one has $E_\beta = ((E(T))_{\bf{a}})_1$.
\end{proof}

\subsection{Explicit examples}

We conclude this section by giving explicit examples of finite groups $G$ for which the answer to Problem \ref{prob:mf} is affirmative and/or there exists a parametric (projective) (Artin) $G$-Galois family. To that end, we use the previous results from this section, thus meaning that the groups we choose below are known to have a generic polynomial over $\Qq$ and/or to fulfill the Beckmann--Black Problem. Of course, there are more groups fulfilling this condition than just those given in the next theorem and we invite the interested reader to give more examples. 

We start with the mod $p$ case.

\begin{thm}\label{thm:parametric_poly}
Let $p$ be a prime number.

\vspace{0.5mm}

\noindent
{\rm{(1)}} Let $G \subset {\rm{PGL}}_2(\overline{\Ff_p})$ be a subgroup isomorphic to any of the following finite groups: $\Zz/2\Zz \times \Zz/2\Zz$, the dihedral group $D_4$ with eight elements, $A_4$, $S_4$, $A_5$, $S_5$. Assume 

\vspace{0.5mm}

- $p \geq 3$ if $G=\Zz/2\Zz \times \Zz/2\Zz$ or $D_4$, 

\vspace{0.5mm}

- $p \geq 5$ if $G=A_4$ or $S_4$, 

\vspace{0.5mm}

- $p \geq 7$ if $G=A_5$,

\vspace{0.5mm}

- $p=5$ if $G=S_5$. 

\vspace{0.5mm}

\noindent
Then the following two conclusions hold.

\vspace{0.5mm}

{\rm{(a)}} There is a regular parametric 2-parameter projective $G$-Galois family.

\vspace{0.5mm}

{\rm{(b)}} For every normalised Hecke eigenform $f$ such that the Galois group ${\rm{Gal}}(K_f^\proj/\Qq)$ is conjugate 

to $G$, there exists an infinite regular 1-parameter projective $G$-Galois family containing $f$. In parti-

cular, the answer to Problem \ref{prob:mf} is affirmative.

\vspace{0.5mm}

\noindent
{\rm{(2)}} Let $G \subset {\rm{PGL}}_2(\overline{\Ff_p})$ be a subgroup isomorphic to any of the following finite groups: the dihedral group $D_m$ with $2m$ elements ($m$ odd) or the dihedral group $D_8$ with 16 elements. Assume

\vspace{0.5mm}

- $p$ is odd (in both cases),

\vspace{0.5mm}

- $p$ does not divide $m$ (in the former case). 

\vspace{0.5mm}

\noindent
Then the following two conclusions hold.

\vspace{0.5mm}

{\rm{(a)}} There is a regular parametric n-parameter projective $G$-Galois family for some $n \geq 1$.

\vspace{0.5mm}

{\rm{(b)}} For every normalised Hecke eigenform $f$ such that the Galois group ${\rm{Gal}}(K_f^\proj/\Qq)$ is conjugate 

to $G$, there exists an infinite regular 1-parameter projective $G$-Galois family containing $f$. In parti-

cular, the answer to Problem \ref{prob:mf} is affirmative.

\vspace{0.5mm}

\noindent
{\rm{(3)}} Assume $p=3$. Let $G$ be the finite group $A_6 \cong {\rm{PSL}}_2(\Ff_9)$ and let $f$ be a normalised Hecke eigenform such that the Galois group ${\rm{Gal}}(K_f^\proj/\Qq)$ is conjugate to $G$. Then there is an infinite regular 1-parameter projective $G$-Galois family containing $f$. In particular, the answer to Problem \ref{prob:mf} is affirmative.
\end{thm}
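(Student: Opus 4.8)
The plan is to treat the three parts by one common two-step template: first import the appropriate field-arithmetic input for the group $G$ --- a generic polynomial for the groups in (1) and (2), and a solution of the Beckmann--Black problem for $A_6$ in (3) --- and then feed it into the bridging results of Sections \ref{sec:gal_fam} and \ref{sec:BB} to transfer the conclusion to the automorphic side. The structural remark that makes this work uniformly is that every group $G$ occurring in the statement is an \emph{irreducible} finite subgroup of $\PGL_2(\overline{\Ff_p})$: none of them is cyclic, and each lifts to an irreducible subgroup of $\GL_2$. Consequently, whenever $\Image(\rho_f^\proj)$ is conjugate to $G$, the representation $\rho_f^\proj$ is irreducible and $f$ is automatically cuspidal; this is exactly the hypothesis required by Propositions \ref{rem:bbpb1}, \ref{pp->pp_mf} and \ref{prop:bbpb2}.

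For conclusion (a) in parts (1) and (2), I would invoke the existence of a generic polynomial $P(\mathbf{T},Y) \in \Qq(\mathbf{T})[Y]$ for $G$ over $\Qq$: a two-parameter one for each group listed in (1) and an $n$-parameter one (for a suitable $n \geq 1$) for the dihedral groups in (2); all of these are recorded in \cite{JLY02}. Two standard properties of generic polynomials are then used: the splitting field $E$ of $P$ over $\Qq(\mathbf{T})$ is a $\Qq$-regular Galois extension of group $G$, and, by genericity, $E/\Qq(\mathbf{T})$ specialises to every $G$-Galois extension of $\Qq$, in particular to every totally imaginary one. Since $G$ is irreducible and $p \geq 3$ under the stated hypotheses, Proposition \ref{pp->pp_mf} converts this into a regular parametric $n$-parameter projective $G$-Galois family (with $n=2$ in case (1)), which is exactly conclusion (a).

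For conclusion (b) in parts (1) and (2), I would apply Proposition \ref{prop:going_up} to the regular parametric family just produced: it yields an affirmative answer to Problem \ref{prob:mf}, i.e.\ every normalised Hecke eigenform $f$ with $\Gal(K_f^\proj/\Qq)$ conjugate to $G$ lies in \emph{some} regular one-parameter projective $G$-Galois family. Part (3) is obtained along the same lines, the field-arithmetic input now being that $A_6 \cong \PSL_2(\Ff_9)$ fulfills the Beckmann--Black problem over $\Qq$ (see the references collected in \cite{Deb01b}); since $p = 3 \geq 3$ and $A_6$ is irreducible, Proposition \ref{rem:bbpb1} again makes Problem \ref{prob:mf} affirmative for $G = A_6$. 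Finally, in all three parts, once a given $f$ is known to belong to a regular one-parameter projective $G$-Galois family, the irreducibility of $\rho_f^\proj$ and the resulting cuspidality of $f$ let me apply Proposition \ref{prop:bbpb2}, upgrading this membership to an \emph{infinite} regular one-parameter projective $G$-Galois family containing $f$, which is the remaining assertion of (b) and of (3).

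The genuinely substantive content lies entirely in the imported field-arithmetic facts, and this is where I expect the main difficulty: confirming that each group in (1) admits a generic polynomial in exactly two parameters over $\Qq$ (the case $S_5 \cong \PGL_2(\Ff_5)$ at $p=5$ deserving particular attention), checking that the stated congruence conditions on $p$ really do place $G$ as the intended irreducible subgroup of $\PGL_2(\overline{\Ff_p})$ (for instance $A_6$ forces $p=3$, since $A_6 \cong \PSL_2(\Ff_9)$ occurs in $\PGL_2(\overline{\Ff_p})$ only in characteristic $3$), and, for part (3), locating a reference that establishes the Beckmann--Black property for $A_6$. By contrast, the modular-forms side is routine once these inputs are in place, being handled entirely by the bridging propositions together with the uniform irreducibility of the groups concerned.
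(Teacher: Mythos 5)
Your proposal is correct and takes essentially the same route as the paper: for (1) and (2) it invokes the generic polynomials over $\Qq$ recorded in \cite{JLY02}, feeds them through Remark \ref{!} and Proposition \ref{pp->pp_mf} to get conclusion (a), and then combines Propositions \ref{prop:going_up} and \ref{prop:bbpb2} for conclusion (b), while part (3) uses the Beckmann--Black property of $A_6$ from \cite{Deb01b} together with Propositions \ref{rem:bbpb1} and \ref{prop:bbpb2}, exactly as in the paper's proof. The only cosmetic difference is the phrasing of the irreducibility/cuspidality check needed for Proposition \ref{prop:bbpb2} (the paper notes that $G$ is not a quotient of a finite subgroup of the upper triangular matrices in $\GL_2(\overline{\Ff_p})$, whereas you argue via non-cyclicity and irreducible lifts), but both come down to the same brief observation that no group in the list can be the projective image of a reducible representation under the stated hypotheses on $p$.
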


We shall need the following definition:

\begin{defi} \label{def:par_gen2}
Given a positive integer $n$, let ${\bf{T}}=(T_1, \dots, T_n)$ be an $n$-tuple of algebraically independent indeterminates and $P({\bf{T}},Y) \in \Qq[{\bf{T}}][Y]$ a monic separable polynomial. Denote the Galois group of $P({\bf{T}},Y)$ over $\Qq({\bf{T}})$ by $G$.

\vspace{0.5mm}

\noindent
{\rm{(1)}} Let $k$ be a field containing $\Qq$. Say that $P({\bf{T}},Y)$ is {\em{$k$-parametric}} if, for every Galois extension $F/k$ of group $G$, the field $F$ is the splitting field over $k$ of some polynomial $P({\bf{t}},Y)$ with ${\bf{t}} \in k^n$.

\vspace{0.5mm}

\noindent
{\rm{(2)}} Say that $P({\bf{T}}, Y)$ is {\em{generic}} if it is $k$-parametric for every field $k$ containing $\Qq$.
\end{defi}

\begin{rem} \label{!}
Let $G$ be a finite group and $P({\bf{T}},Y) \in \Qq[{\bf{T}}][Y]$ a monic separable polynomial of group $G$ and splitting field $E$ over ${\bf{\Qq}}({\bf{T}})$. 

\vspace{0.5mm}

\noindent
{\rm{(1)}} If $P({\bf{T}},Y)$ is $\Qq$-parametric, then $E/\Qq({\bf{T}})$ is parametric (see \S\ref{ssec:ffe}).

\vspace{0.5mm}

\noindent
{\rm{(2)}} If $P({\bf{T}},Y)$ is generic, then $E/\Qq({\bf{T}})$ is $\Qq$-regular (see \cite[Proposition 3.3.8]{JLY02}).
\end{rem}

\begin{proof}[Proof of Theorem \ref{thm:parametric_poly}]
(1) By \cite[page 203]{JLY02}, the group $G$ has a generic polynomial $P(T_1, T_2,Y) \in \Qq[T_1,T_2][Y]$. Consequently, the fact that (a) holds is a consequence of Proposition \ref{pp->pp_mf} and Remark \ref{!}. As for (b), it is a consequence of (a), Proposition \ref{prop:bbpb2} and Proposition \ref{prop:going_up} (note that irreduciblity is guaranteed as it is easy to see that $G$ is not isomorphic to any quotient of a finite subgroup of the upper triangular matrices inside $\GL_2(\overline{\Ff_p})$).

\noindent
(2) The proof is identical to the proof of (1). The group $G$ has a generic polynomial with rational coefficients (see, e.g., \cite[page 112]{JLY02}).

\noindent
(3) Here we use that the Beckmann-Black Problem has a positive answer for the group $G$ (see, e.g., \cite[th\'eor\`eme 2.2]{Deb01b}) and apply Propositions \ref{rem:bbpb1} and \ref{prop:bbpb2}.
\end{proof}

Now, we give the analogue of Theorem \ref{thm:parametric_poly} in the Artin situation. As the proof is almost identical to the previous one, details are left to the reader.

\begin{thm} \label{thm:parametric_poly2}
{\rm{(1)}} Let $G\subset {\rm{PGL}}_2(\Cc)$ be a subgroup isomorphic to any of the following finite groups: $\Zz/2\Zz \times \Zz/2\Zz$, the dihedral group $D_4$ with eight elements, $A_4$, $S_4$, $A_5$. Then these conclusions hold.

\vspace{0.5mm}

{\rm{(a)}} There is a regular parametric 2-parameter projective Artin $G$-Galois family.

\vspace{0.5mm}

{\rm{(b)}} For every holomorphic normalised Hecke eigenform $f$ of weight one such that the Galois group 

${\rm{Gal}}(K_f^\proj/\Qq)$ is conjugate to $G$, there is an infinite regular 1-parameter projective Artin $G$-Galois 

family containing $f$. In particular, the answer to Problem \ref{prob:mf} is affirmative.

\vspace{0.5mm}

\noindent
{\rm{(2)}} Let $G\subset {\rm{PGL}}_2(\Cc)$ be a subgroup isomorphic to any of the following finite groups: the dihedral group $D_m$ with $2m$ elements ($m$ odd) or the dihedral group $D_8$ with 16 elements.

\vspace{0.5mm}

{\rm{(a)}} There is a regular parametric n-parameter projective Artin $G$-Galois family for some $n \geq 1$.

\vspace{0.5mm}

{\rm{(b)}} For every holomorphic normalised Hecke eigenform $f$ of weight one such that the Galois group 

${\rm{Gal}}(K_f^\proj/\Qq)$ is conjugate to $G$, there is an infinite regular 1-parameter projective Artin $G$-Galois

family containing $f$. In particular, the answer to Problem \ref{prob:mf} is affirmative.
\end{thm}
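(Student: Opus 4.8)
The plan is to follow the proof of Theorem~\ref{thm:parametric_poly} almost line for line, the one substantive change being that every appeal to Serre's Modularity Conjecture (Theorem~\ref{thm:Serre-Conj}) is replaced by the modularity of odd irreducible two-dimensional \emph{complex} Galois representations by holomorphic weight-one eigenforms (Theorem~\ref{thm:Serre-Artin}). This substitution has in fact already been absorbed into the toolbox of \S\ref{sec:BB}: Propositions~\ref{tool0}, \ref{pp->pp_mf}, \ref{prop:going_up} and \ref{prop:bbpb2} were each stated with a ``resp.'' Artin clause and proved with the explicit remark that the Artin case is obtained by invoking Theorem~\ref{thm:Serre-Artin} in place of Theorem~\ref{thm:Serre-Conj}, so no genuinely new input is needed. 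One preliminary observation used throughout is that each group occurring in the statement is a \emph{non-cyclic} finite subgroup of $\PGL_2(\CC)$; such a subgroup fixes no point of $\PP^1$, hence is irreducible in the sense of \S\ref{ssec:gr}, which is exactly the hypothesis required to apply Propositions~\ref{pp->pp_mf} and~\ref{prop:bbpb2}.

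For part~(a) of both~(1) and~(2), I would first record that each group on the list admits a generic polynomial $P(\bT,Y) \in \Qq[\bT][Y]$: for $\Zz/2\Zz \times \Zz/2\Zz$, $D_4$, $A_4$, $S_4$, $A_5$ one takes $\bT = (T_1,T_2)$ from \cite[page 203]{JLY02}, and for $D_m$ ($m$ odd) or $D_8$ one takes $\bT = (T_1,\dots,T_n)$ for a suitable $n$ from \cite[page 112]{JLY02}. By Remark~\ref{!}, the splitting field $E/\Qq(\bT)$ is then both $\Qq$-regular and parametric. Feeding this $E/\Qq(\bT)$ into the projective Artin case of Proposition~\ref{pp->pp_mf} gives the desired regular parametric $n$-parameter projective Artin $G$-Galois family; note that the hypothesis of that proposition, namely that every \emph{totally imaginary} Galois extension of $\Qq$ of group $G$ be a specialisation of $E/\Qq(\bT)$, holds a fortiori since \emph{all} Galois extensions of group $G$ are so realised.

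For part~(b) I would chain the two upgrading propositions. Since~(a) furnishes a regular parametric projective Artin $G$-Galois family, the projective Artin case of Proposition~\ref{prop:going_up} yields an affirmative answer to Problem~\ref{prob:mf}; in particular the given weight-one eigenform $f$, whose projective image $\Gal(K_f^\proj/\Qq)$ is conjugate to $G$, lies in \emph{some} regular $1$-parameter projective Artin $G$-Galois family. To promote this to an \emph{infinite} family I would apply Proposition~\ref{prop:bbpb2}, whose hypotheses are met: since $\Image(\rho_f^\proj)$ is conjugate to the irreducible group $G$, the representation $\rho_{f,\CC}$ is irreducible, which in weight one is equivalent to $f$ being cuspidal (as recalled in \S\ref{ssec:gr}). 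Proposition~\ref{prop:bbpb2} then produces the infinite regular $1$-parameter projective Artin $G$-Galois family containing $f$, and with it the affirmative answer to Problem~\ref{prob:mf}.

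The step demanding the most care is not a genuine obstacle but rather the bookkeeping on which groups may serve as $G$. The complex list in~(1) is strictly shorter than its mod~$p$ counterpart in Theorem~\ref{thm:parametric_poly}: the groups $S_5$ and $A_6 \cong \PSL_2(\Ff_9)$ appear there but are absent here, precisely because the finite subgroups of $\PGL_2(\CC)$ are only the cyclic, dihedral, $A_4$, $S_4$ and $A_5$ groups, so neither $S_5$ nor $A_6$ can occur as $G$ in the Artin setting. One must likewise cite the generic polynomials with the correct number of parameters, two sufficing in case~(1) while the dihedral families in~(2) may require $n > 2$, which is why the statement only asserts existence for some $n \geq 1$. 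Beyond these verifications the argument introduces no new difficulty, which is exactly why the authors leave the details to the reader.
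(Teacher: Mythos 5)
Your proposal is correct and is essentially the paper's own proof: the authors explicitly state that the argument is ``almost identical'' to that of Theorem~\ref{thm:parametric_poly} and leave the details to the reader, and you have supplied exactly those details --- the generic polynomials from \cite[pages 203 and 112]{JLY02}, Remark~\ref{!}, and the chain Proposition~\ref{pp->pp_mf} $\Rightarrow$ (a), then Propositions~\ref{prop:going_up} and~\ref{prop:bbpb2} $\Rightarrow$ (b), with Theorem~\ref{thm:Serre-Artin} standing in for Theorem~\ref{thm:Serre-Conj}. Your added verifications (irreducibility of the non-cyclic subgroups of ${\rm{PGL}}_2(\Cc)$, cuspidality of $f$ via irreducibility of $\rho_{f,\CC}$ in weight one, and the exclusion of $S_5$ and $A_6$ from the Artin list) are accurate and match the paper's implicit reasoning.
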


Finally, we show that parametric 1-parameter projective (Artin) $G$-Galois families do not occur for several finite groups $G$:

\begin{thm} \label{?}
{\rm{(1)}} Let $p$ be a prime number and $G$ a finite irreducible subgroup of ${\rm{PGL}}_2(\overline{\Ff_p})$. Suppose the following three conditions hold:

\vspace{0.5mm}

{\rm{(a)}} $G$ has even order,

\vspace{0.5mm}

{\rm{(b)}} $G$ has a generic polynomial with rational coefficients,

\vspace{0.5mm}

{\rm{(c)}} $G$ has a non-cyclic abelian subgroup.

\vspace{0.5mm}

\noindent
Then there does not exist any parametric 1-parameter projective $G$-Galois family.

\vspace{0.5mm}

\noindent
{\rm{(2)}} Let $G$ be a finite irreducible subgroup of ${\rm{PGL}}_2(\Cc)$. Suppose the following three conditions hold:

\vspace{0.5mm}

{\rm{(a)}} $G$ has even order,

\vspace{0.5mm}

{\rm{(b)}} $G$ has a generic polynomial with rational coefficients,

\vspace{0.5mm}

{\rm{(c)}} $G$ has a non-cyclic abelian subgroup.

\vspace{0.5mm}

\noindent
Then there does not exist any parametric 1-parameter projective Artin $G$-Galois family.
\end{thm}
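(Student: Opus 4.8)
The plan is to argue by contradiction, exploiting the rigidity of one-parameter extensions: by Proposition~\ref{kln} the decomposition groups of specialisations of a $\Qq$-regular extension $E/\Qq(T)$ are cyclic at a positive-density set of primes, whereas hypothesis~(c) will let me build a $G$-extension of $\Qq$ with a non-cyclic decomposition group. I treat~(1) and~(2) uniformly, since the argument only uses that $G$ is a finite irreducible group satisfying~(a)--(c). So suppose a parametric $1$-parameter projective (Artin) $G$-Galois family exists. By Proposition~\ref{pp->pp_mf} there is a Galois extension $E/\Qq(T)$ of group $G$ in which every totally imaginary Galois extension of $\Qq$ of group $G$ occurs as a specialisation $E_{t_0}$ (in the mod~$2$ case: \emph{every} Galois extension of group $G$). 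I first claim $E/\Qq(T)$ is $\Qq$-regular. Otherwise $k:=E\cap\Qbar\neq\Qq$, and then every specialisation with full group $G$ contains $k$, because such a specialisation has trivial inertia and decomposition group all of $G$. Using the generic polynomial of $G$ from~(b), together with~(a) to prescribe an involution as complex conjugation, one builds two linearly disjoint (totally imaginary, if $p$ is odd) $G$-extensions of $\Qq$; at most one contains $k$, so the other is a $G$-extension of the required type that is not a specialisation, contradicting parametricity. Hence $E/\Qq(T)$ is $\Qq$-regular.

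Next I fix the local obstruction. By~(c), $G$ contains a subgroup $B\cong\Zz/q\Zz\times\Zz/q\Zz$ for some prime $q$. Let $\Sigma$ be the finite set of primes excluded by Proposition~\ref{kln} for $E/\Qq(T)$ (enlarged by the residue characteristic in the mod~$p$ case), and let $F_0$ be the number field occurring in that proposition. Using the theorems of Chebotarev and Dirichlet, I would choose a prime $\ell\notin\Sigma$, with $\ell\neq q$, that is totally split in $F_0/\Qq$ and satisfies $\ell\equiv 1\pmod q$; the congruence guarantees that $\Qq_\ell$ admits a tamely ramified Galois extension with group $B$.

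The heart of the argument is to realise this local datum inside a global $G$-extension. By Kemper's descent-generic theorem, a generic polynomial $P(\mathbf{S},Y)$ for $G$ over $\Qq$ is also generic for every subgroup of $G$ (see \cite{JLY02}); in particular the chosen $B$-extension of $\Qq_\ell$ is a splitting field of $P(\mathbf{s}_\ell,Y)$ for some $\mathbf{s}_\ell$ over $\Qq_\ell$, and (when $p$ is odd) a totally complex étale algebra at $\infty$ is a splitting field of $P(\mathbf{s}_\infty,Y)$ for some real $\mathbf{s}_\infty$, using the involution from~(a). By weak approximation in the affine parameter space and Hilbert's irreducibility theorem, I would then select $\mathbf{s}$ with rational coordinates lying in the Hilbert set of $P$ (so that the splitting field $F$ has group $G$ over $\Qq$) and sufficiently close to $\mathbf{s}_\ell$ at $\ell$ and to $\mathbf{s}_\infty$ at $\infty$. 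Krasner's lemma then identifies the completion of $F$ at a place above $\ell$ with the prescribed $B$-extension, so that the decomposition group of $F/\Qq$ at $\ell$ equals $B$, and makes $F$ totally imaginary in the odd case. Thus $F$ is a (totally imaginary) $G$-extension of $\Qq$ whose decomposition group at $\ell$ is non-cyclic.

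Finally, parametricity gives $F=E_{t_0}$ for some $t_0\in\Qq$. Since $E/\Qq(T)$ is $\Qq$-regular and $\ell$ is totally split in $F_0$ while avoiding $\Sigma$, Proposition~\ref{kln} forces the decomposition group of $E_{t_0}/\Qq=F/\Qq$ at $\ell$ to be cyclic, contradicting $B\cong(\Zz/q\Zz)^2$; this completes the proof. I expect the main obstacle to be the global realisation in the third paragraph: it is precisely here that the full force of~(b) is required, namely the descent-generic property ensuring that the abelian subgroup $B$ is realised \emph{locally}, combined with weak approximation and Krasner's lemma to assemble the prescribed local behaviour at $\ell$ and at $\infty$ into a single global $G$-extension.
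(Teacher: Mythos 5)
Your proposal is correct, and its skeleton coincides with the paper's: show the underlying extension $E/\Qq(T)$ must be $\Qq$-regular, choose a prime $\ell\equiv 1\pmod q$ totally split in the field of Proposition~\ref{kln} so that every specialisation of $E/\Qq(T)$ has cyclic decomposition group at $\ell$, then manufacture a totally imaginary $G$-extension of $\Qq$ whose decomposition group at $\ell$ is the non-cyclic group $B$ from hypothesis~(c), and derive a contradiction with parametricity (you channel this through Proposition~\ref{pp->pp_mf}, the paper through Proposition~\ref{tool0}; that difference is immaterial). Where you genuinely diverge is the construction of the offending extension. The paper first realises the local datum globally by a Grunwald--Wang argument (\cite[(9.2.8)]{NSW08} applied to $\Zz/q\Zz\times\Zz/q\Zz$), then feeds that global extension together with $\Qq(\sqrt{-1})/\Qq$ into Lemma~\ref{kln2} (an interpolation construction built from the descent-generic polynomial), and finally invokes the local-global principle for specialisations of regular extensions (Proposition~\ref{pv}). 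You instead work directly in the parameter space of the generic polynomial: Kemper's descent-genericity produces local parameters $\mathbf{s}_\ell$ and $\mathbf{s}_\infty$, and approximation plus Krasner's lemma assembles them into a global $G$-extension with the prescribed completions. This bypasses Grunwald--Wang and Lemma~\ref{kln2} entirely; the price is that ``weak approximation plus Hilbert irreducibility'' must be used in the precise form that Hilbert subsets of $\Qq^n$ are dense in $\prod_{v\in S}\Qq_v^n$ for a finite set $S$ of places --- a true and standard statement, but note it is exactly the engine behind \cite[Proposition 2.1]{PV05}, i.e., behind Proposition~\ref{pv}, so the two routes ultimately lean on the same approximation result. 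Finally, your regularity step is only sketched (``one builds two linearly disjoint extensions''); it completes exactly as in the paper: build the first extension, then build the second while additionally prescribing split (hence unramified) completions at the finitely many primes ramified in the first, so that the intersection of the two fields is unramified everywhere and therefore equals $\Qq$ by Minkowski's theorem.
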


In particular, if $G$ is any finite group and $p$ any prime number as in Theorem \ref{thm:parametric_poly}(1), then 2 is the least integer $n$ such that there exists a (regular) parametric $n$-parameter projective $G$-Galois family\footnote{All these finite groups admit $\Zz/2\Zz \times \Zz/2\Zz$ as a subgroup.}. The same conclusion holds in the Artin situation for finite groups $G$ in Theorem \ref{thm:parametric_poly2}(1).

We shall need the following lemma:

\begin{lem} \label{kln2}
Let $G$ be a finite group, $m$ a positive integer, and $F_1/\Qq, \dots, F_m/\Qq$ finite Galois extensions of $\Qq$ whose Galois groups are subgroups of $G$. Suppose there exists a generic polynomial with rational coefficients and Galois group $G$. Then there exists a $\Qq$-regular Galois extension of $\Qq(T)$ of group $G$ which specialises to $F_1/\Qq, \dots, F_m/\Qq$ at non branch points.
\end{lem}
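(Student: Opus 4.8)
The goal is to produce a single $\Qq$-regular Galois extension $E/\Qq(T)$ with group $G$ that \emph{simultaneously} realises all of $F_1/\Qq, \dots, F_m/\Qq$ as specialisations. The natural tool is the hypothesis that $G$ admits a generic polynomial $P(\mathbf{U},Y) \in \Qq[\mathbf{U}][Y]$ with rational coefficients, where $\mathbf{U}=(U_1,\dots,U_d)$. Genericity means that each $F_j/\Qq$, being a $G'$-extension for some subgroup $G' \le G$, is the splitting field over $\Qq$ of $P(\mathbf{u}_j,Y)$ for a suitable parameter point $\mathbf{u}_j \in \Qq^d$; moreover, by Remark~\ref{!}(2), the splitting field $E_{\mathrm{gen}}$ of $P(\mathbf{U},Y)$ over $\Qq(\mathbf{U})$ gives a $\Qq$-regular extension of group $G$. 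The plan is to produce a single-parameter specialisation of $P$ that passes through all the points $\mathbf{u}_1,\dots,\mathbf{u}_m$.

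The key device is interpolation in the single new indeterminate $T$. For each coordinate $i \in \{1,\dots,d\}$, I would choose polynomials $c_i(T) \in \Qq[T]$ with $c_i(\tau_j) = (\mathbf{u}_j)_i$ at $m$ distinct chosen rational base points $\tau_1,\dots,\tau_m \in \Qq$ (Lagrange interpolation suffices, working coordinate-wise). Setting $\mathbf{c}(T) = (c_1(T),\dots,c_d(T))$ and $Q(T,Y) := P(\mathbf{c}(T),Y) \in \Qq[T][Y]$, I let $E$ be the splitting field of $Q(T,Y)$ over $\Qq(T)$. Then specialising $E/\Qq(T)$ at $T = \tau_j$ recovers the splitting field of $P(\mathbf{u}_j,Y) = Q(\tau_j,Y)$ over $\Qq$, which is $F_j$; this is exactly the principle for specialisations of splitting fields of monic separable polynomials recalled in \S\ref{ssec:ffe}. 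One must verify that $Q(\tau_j,Y)$ is separable (so that $\tau_j$ is not a branch point and the specialised field really is the splitting field of $Q(\tau_j,Y)$), which holds because $F_j/\Qq$ is a genuine separable extension.

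The remaining point is that $E/\Qq(T)$ must itself have the full group $G$ and be $\Qq$-regular. For the group: since $\mathrm{Gal}(Q(T,Y)/\Qq(T))$ is a subgroup of $G$ while it surjects onto $\mathrm{Gal}(F_j/\Qq)$ under each specialisation, it suffices to arrange that \emph{at least one} of the realised extensions has full group $G$, or to add one further base point $\tau_0$ at which $\mathbf{c}(\tau_0)$ lands at a parameter value giving a $G$-extension. Since $E_{\mathrm{gen}}/\Qq(\mathbf{U})$ is $\Qq$-regular with group $G$ and $\Qq$ is Hilbertian, generic $G$-specialisations of $P$ exist, so I can prescribe one more interpolation condition forcing $\mathrm{Gal}(E/\Qq(T)) = G$. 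For $\Qq$-regularity, I would note that a curve $\{\mathbf{U} = \mathbf{c}(T)\}$ generically avoids the (proper Zariski-closed) locus where the constant field jumps, so after possibly perturbing the interpolation data the image of $\mathbf{c}$ is not contained in the branch locus and $E \cap \Qbar = \Qq$; the simplest clean argument is that two of the specialisations are linearly disjoint $G$-extensions (as exploited in the proof of Proposition~\ref{prop:going_up}), which forces the constant field of $E$ to be $\Qq$.

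I expect the main obstacle to be \emph{pinning down $\Qq$-regularity of the constructed $E/\Qq(T)$ rigorously}, rather than the interpolation itself, which is routine. The subtlety is that an arbitrary polynomial curve $T \mapsto \mathbf{c}(T)$ in parameter space could in principle lie inside the locus along which the splitting field acquires extra constants, and the genericity hypothesis only directly controls \emph{values} at rational points, not the global geometry of the pulled-back cover. The cleanest way around this is to impose two extra, freely chosen interpolation nodes yielding linearly disjoint $G$-extensions of $\Qq$ (possible by Hilbertianity applied to $E_{\mathrm{gen}}/\Qq(\mathbf{U})$), and then deduce both that $\mathrm{Gal}(E/\Qq(T)) = G$ and that $E$ is $\Qq$-regular from the linear disjointness of these two fibres, exactly mirroring the final paragraph of the proof of Proposition~\ref{prop:going_up}.
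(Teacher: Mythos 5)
Your construction is, in outline, the same as the paper's own proof: choose parameter points $\mathbf{u}_j$ for the $F_j$, interpolate them coordinate-wise by polynomials $c_i(T)$, let $E$ be the splitting field of $Q(T,Y)=P(\mathbf{c}(T),Y)$ over $\Qq(T)$, and add auxiliary interpolation nodes to force the full Galois group and $\Qq$-regularity. Your way of getting regularity (two extra nodes whose fibres are linearly disjoint $G$-extensions, so that the constant field $E\cap\Qbar$, which embeds into every specialisation, must be $\Qq$) is a valid minor variant of the paper's slicker trick, which instead makes the \emph{trivial} extension $\Qq/\Qq$ one of the interpolated fibres.

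There is, however, a genuine gap at your very first step. You assert that ``genericity means'' that each $F_j/\Qq$, whose Galois group is only a subgroup $G'\le G$, is the splitting field of some $P(\mathbf{u}_j,Y)$ with $\mathbf{u}_j\in\Qq^d$. That is not what Definition~\ref{def:par_gen2} says: a generic polynomial is only required to parametrise Galois extensions with group \emph{exactly} $G$ (over every field $k\supseteq\Qq$). The property you actually need --- that a generic polynomial for $G$ also parametrises all $H$-extensions for every subgroup $H\le G$, with the specialised polynomial moreover separable --- is a nontrivial theorem of DeMeyer, and citing it (\cite{DeM83} together with \cite[Theorem 5.2.5]{JLY02}) is precisely the key external input in the paper's proof. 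This matters because the whole point of Lemma~\ref{kln2}, as it is used later in the paper (specialising to $\Qq(\sqrt{-1})/\Qq$, to $\Qq/\Qq$, and to extensions of group $\Zz/p_0\Zz\times\Zz/p_0\Zz$ inside a larger $G$), is that the $F_i$ may have Galois groups that are \emph{proper} subgroups of $G$; your argument as written only proves the lemma when every $F_j$ is a full $G$-extension. Relatedly, your justification that $Q(\tau_j,Y)$ is separable ``because $F_j/\Qq$ is a genuine separable extension'' does not work: a non-separable polynomial can have a separable splitting field (e.g.\ $(Y^2+1)^2$), so separability of the specialised polynomial must also be extracted from the DeMeyer-type statement, not from the extension $F_j/\Qq$. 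Both defects are repaired by the citation rather than by a new idea, but without it the proof does not establish the lemma as stated.
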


\begin{proof}
Since there exists a generic polynomial of group $G$ with rational coefficients, one may apply \cite{DeM83} and \cite[Theorem 5.2.5]{JLY02} to get that there exist an integer $n \geq 1$ and a polynomial $P({\bf{T}},Y)=P(T_1, \dots, T_n,Y) \in \Qq[{\bf{T}}][Y]$ of group $G$ such that, for every extension $L/\Qq$ and every Galois extension $F/L$ of group $H$ contained in $G$, there exists ${\bf{t}} \in L^n$ such that $P({\bf{t}},Y)$ is separable and $F$ is the splitting field over $L$ of $P({\bf{t}}, Y)$.

Pick a finite Galois extension $F_{m+1}/\Qq$ of group $G$ and set $F_{m+2}/\Qq=\Qq/\Qq$. By the above, for $i \in \{1, \dots, m+2\}$, there exists ${\bf{t}}_i \in \Qq^n$ such that $P({\bf{t}}_i,Y)$ is separable and the splitting field over $\Qq$ of $P({\bf{t}}_i,Y)$ is $F_i$. By polynomial interpolation (as in the proof of \cite[Proposition 3.3.10]{JLY02}), one constructs a monic polynomial $Q(T,Y) \in \Qq[T][Y]$ such that, for each $i \in \{1, \dots, m+2\}$, $Q(i,Y) = P({\bf{t}}_i,Y)$. Fix $i \in \{1, \dots, m+2\}$. Since $P({\bf{t}}_i,Y)$ is separable, $Q(T,Y)$ is also separable. Let $E$ be the splitting field of $Q(T,Y)$ over $\Qq(T)$. Since $Q(i,Y)$ is separable, the specialisation of $E/\Qq(T)$ at $i$ is $F_i/\Qq$ and $i$ is not a branch point of $E/\mathbb{Q}(T)$. It remains to notice that $E/\Qq(T)$ must be $\Qq$-regular (by using $F_{m+2}/\Qq)$ and has Galois group $G$ (by using $F_{m+1}/\Qq$) to conclude the proof.
\end{proof}

\begin{proof}[Proof of Theorem \ref{?}]
(1) Suppose there exists a parametric 1-parameter projective $G$-Galois family and denote the underlying function field extension by $E/\Qq(T)$. 

First, assume $E/\Qq(T)$ is not $\Qq$-regular. Then there exists a non-trivial finite Galois extension $L/\Qq$ such that, for every normalised Hecke eigenform $f$ such that ${\rm{Gal}}(K_f^\proj/\Qq)$ is conjugate to $G$, the field $K_f^\proj$ contains $L$. Now, combine (a), (b) and Lemma \ref{kln2} to get the existence of a $\Qq$-regular Galois extension of $\Qq(T)$ of Galois group $G$ which specialises to $\Qq(\sqrt{-1})/\Qq$ at a non branch point. By Proposition \ref{pv}, we then get a finite Galois extension $M_1/\Qq$ of group $G$ which is totally imaginary. Denote the prime numbers which ramify in $M_1/\Qq$ by $p_1, \dots, p_s$. Apply again (a), (b) and Lemma \ref{kln2} to get a $\Qq$-regular Galois extension of $\Qq(T)$ of Galois group $G$ which specialises to $\Qq(\sqrt{-1})/\Qq$ and to $\Qq/\Qq$ at non branch points. One then gets a finite Galois extension $M_2/\Qq$ of group $G$ which is totally imaginary and unramified at $p_1, \dots, p_s$. In particular, the fields $M_1$ and $M_2$ are linearly disjoint over $\Qq$. But, by Proposition \ref{tool0}, there exist two normalised Hecke eigenforms $f_1$ and $f_2$ such that $M_i= K_{f_i}^\proj$ for $i=1,2$, thus leading to a contradiction. Hence, $E/\Qq(T)$ is $\Qq$-regular. 

Next, by (c), the group $G$ has a non-cyclic abelian subgroup $H$. Without loss of generality, we may assume $H= \Zz/p_0 \Zz \times \Zz/p_0 \Zz$ for some prime number $p_0$. Pick a sufficiently large prime number $q$ which is totally split in the number field $F(e^{2 i \pi / {p_0}})$, where $F$ is the number field provided by Proposition \ref{kln}. As $q$ is totally split in $F$, every specialisation of $E/\Qq(T)$ has cyclic decomposition group at $q$. Hence, for every normalised Hecke eigenform $f$ such that ${\rm{Gal}}(K_f^\proj/\Qq)$ is conjugate to $G$, the field $K_f^\proj$ has cyclic decomposition group at $q$. However, since $q$ is totally split in  $\Qq(e^{2 i \pi / {p_0}})$, one has $q \equiv 1 \pmod {p_0}$ (up to finitely many exceptions) and there exists a Galois extension $F^{(q)}$ of $\Qq_q$ of group $\Zz/p_0\Zz \times \Zz/p_0\Zz$. \footnote{Indeed, one can take $F^{(q)}$ to be the compositum of the fields $F_1^{(q)}$ and $F_2^{(q)}$, where $F_1^{(q)}$ is the unique degree $p_0$ unramified extension of $\Qq_q$ and $F_2^{(q)}/\Qq_q$ is a finite Galois extension with Galois group $\Zz/p_0\Zz$ that is totally ramified (such an extension exists; see, e.g., \cite[Chapter IV]{Ser79}).} Now, by \cite[(9.2.8)]{NSW08}, there exists a Galois extension $F/\Qq$ of group $\Zz/p_0\Zz \times \Zz/p_0\Zz$ whose completion at $q$ is equal to $F^{(q)}/\Qq_q$. Consequently, by (c), Lemma \ref{kln2} and Proposition \ref{pv}, we get a finite Galois extension $M/\Qq$ of group $G$, which is totally imaginary and such that the completion at $q$ has Galois group $\Zz/p_0\Zz \times \Zz/p_0\Zz$. By Proposition \ref{tool0}, we get that $M=K_f^\proj$ for some normalised Hecke eigenform $f$, thus leading to another contradiction.

\vspace{0.5mm}

\noindent
(2) The proof is identical to that of (1).
\end{proof}

\section{Infinite Galois families of non-liftable weight one modular eigenforms} \label{sec:explicit}

The aim of this section is to exhibit an infinite regular $1$-parameter projective Galois family of non-liftable Katz modular eigenforms of weight one over ${\overline{\Ff_p}}$ for $p \in \{3,5,7,11\}$.

We start with a general result, which potentially applies to any odd prime number~$p$ \footnote{The oddness of $p$ is only needed because the weight lowering result used in the proof of Theorem \ref{thm1} does not have any published proof in the literature when $p=2$, the representation is unramified at $p=2$ and the image of Frobenius at $p=2$ is scalar. However, a proof is outlined on Frank Calegaris's blog (see \url{https://www.galoisrepresentations.com/2014/08/10/is-serres-conjecture-still-open/}), making the restriction $p>2$ superfluous.}. Consider the following statement:

\vspace{1.5mm}

\noindent
{\rm{($*$)}} {\it{Let $p$ be an odd prime number, $n$ a positive integer and let $G$ be either ${\rm{PGL}}_2(\mathbb{F}_{p^n})$ or ${\rm{PSL}}_2(\mathbb{F}_{p^n})$. There exists an infinite regular $1$-parameter projective $G$-Galois family consisting of Katz modular forms of weight one. Moreover, no family member is liftable to a holomorphic weight one Hecke eigenform in any level.}}

\begin{thm} \label{thm1}
Statement {\rm{($*$)}} holds if $G$ is not isomorphic to any finite subgroup of ${\rm{PGL}}_2(\mathbb{C})$ and if there exists a $\Qq$-regular Galois extension $E/\Qq(T)$ of group $G$ such that the following two conditions hold:

\vspace{0.5mm}

\noindent
{\rm{(1)}} there exists $t_0 \in \Qq$, not a branch point of $E/\mathbb{Q}(T)$, such that $E_{t_0}/\Qq$ is totally imaginary,

\vspace{0.5mm}

\noindent
{\rm{(2)}} there exists $t_0 \in \Qq$, not a branch point of $E/\mathbb{Q}(T)$,  such that $E_{t_0}/\Qq$ is unramified at $p$.
\end{thm}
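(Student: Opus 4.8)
The plan is to realise the desired eigenforms as specialisations of the given extension $E/\Qq(T)$, using Proposition \ref{pv} to control the archimedean place and the place above~$p$ at the same time, and then to invoke the weight-one refinement of Serre's conjecture to land in Katz forms of weight one. First I would merge hypotheses (1) and (2) into a single local prescription and feed it to Proposition \ref{pv}. By (1) there is $t_0 \in \Qq$ with $E_{t_0}/\Qq$ totally imaginary, so the completion of $E_{t_0}$ at the infinite place is $F_\infty := \CC$, with $\Gal(\CC/\RR) \cong \Zz/2\Zz$ embedding into~$G$; by (2) there is $t_0' \in \Qq$ with $E_{t_0'}/\Qq$ unramified at~$p$, and I set $F_p$ to be the completion of $E_{t_0'}$ at~$p$, an unramified extension of $\Qq_p$ whose Galois group embeds into~$G$. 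Applying Proposition \ref{pv} with $\mathcal{S} = \{p, \infty\}$ and the local data $F_\infty, F_p$ then yields infinitely many $t \in \Qq$ for which $\Gal(E_t/\Qq) = G$, the completion of $E_t$ at $\infty$ is $\CC$ (so $E_t$ is totally imaginary) and the completion of $E_t$ at~$p$ is $F_p$ (so $E_t$ is unramified at~$p$). This produces an infinite supply of pairwise distinct totally imaginary Galois extensions $E_t/\Qq$ of group~$G$ that are unramified at~$p$.

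Next, for each such~$E_t$ I would manufacture a Katz eigenform of weight one. Viewing $E_t/\Qq$ as a projective representation $\GQ \twoheadrightarrow \Gal(E_t/\Qq) \cong G \subset \PGL_2(\overline{\Ff_p})$, it is irreducible (the hypothesis $G \not\subset \PGL_2(\Cc)$ forces $G$ to be one of the non-solvable groups $\PGL_2(\FF_{p^n})$, $\PSL_2(\FF_{p^n})$) and unramified at~$p$. By Tate's theorem (\S\ref{ssec:gr}) I lift it to a linear representation $\rho : \GQ \to \GL_2(\overline{\Ff_p})$ that is again unramified at~$p$; since $E_t$ is totally imaginary, complex conjugation maps to a non-scalar involution, so one may arrange $\rho$ to be odd exactly as in the proof of Proposition \ref{tool0}. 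Thus $\rho$ is odd, irreducible and unramified at~$p$, and the weight-one case of Serre's conjecture (the weight-lowering result alluded to in the footnote above) gives $\rho \cong \rho_{f,p}$ for a Katz eigenform $f$ of weight one over $\overline{\Ff_p}$ with $K_f^\proj = E_t$ and $\Image(\rho_f^\proj)$ conjugate to~$G$. Letting $t$ range over the infinitely many values above, the resulting forms constitute an infinite regular $1$-parameter projective $G$-Galois family, the underlying extension being the $\Qq$-regular extension $E/\Qq(T)$.

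It remains to rule out liftability, which I would do group-theoretically. Suppose a member $f$ lifted to a holomorphic weight one eigenform~$g$; then $\rho_{f,p} \cong \rho_{g,p}$ is the reduction modulo~$p$ of the Artin representation $\rho_{g,\Cc}$, whose finite image I call $\Gamma$ and whose projective image $\Phi$ is a finite subgroup of $\PGL_2(\Cc)$. Reduction modulo~$p$ carries $\Gamma$ onto $\Image(\rho_{g,p})$ and sends scalars to scalars, so the induced surjection onto $G = \Image(\rho_{f,p}^\proj)$ factors through $\Phi$; hence $G$ is a quotient of~$\Phi$. But every quotient of a finite subgroup of $\PGL_2(\Cc)$ (cyclic, dihedral, $A_4$, $S_4$ or $A_5$) is again cyclic, dihedral, $A_4$, $S_4$ or $A_5$, as one checks by listing normal subgroups; thus $G$ would embed into $\PGL_2(\Cc)$, contradicting the hypothesis. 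Therefore no family member is liftable, and statement~$(*)$ holds.

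I expect the main obstacle to be the passage to weight one: the whole construction rests on the fact that an odd, irreducible mod~$p$ representation unramified at~$p$ is Katz-modular of weight one, which is the deep weight-lowering input (valid for odd~$p$, per the footnote) rather than the mere existence of some weight supplied by Theorem \ref{thm:Serre-Conj}. By comparison, the field-arithmetic step of forcing simultaneously totally imaginary and unramified-at-$p$ specialisations through Proposition \ref{pv}, and the concluding group-theoretic non-liftability argument, are routine once that input and the hypothesis $G \not\subset \PGL_2(\Cc)$ are in hand.
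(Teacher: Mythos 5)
Your proposal is correct and follows essentially the same route as the paper's proof: specialisations controlled at $p$ and $\infty$ via Proposition \ref{pv}, Tate lifting plus oddness/irreducibility feeding into Serre's conjecture and Edixhoven's weight-lowering to produce non-liftable Katz weight-one forms, and the same quotient-closure argument on finite subgroups of ${\rm{PGL}}_2(\Cc)$ for non-liftability. The only (immaterial) presentational difference is that you fold the modularity step (Theorem \ref{thm:Serre-Conj}) and the weight-lowering step into a single invocation, where the paper applies them in sequence.
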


\begin{proof}
By the second part of the assumption and Proposition \ref{pv}, the extension $E/\Qq(T)$ has infinitely many distinct specialisations of group $G$ which are totally imaginary and unramified at~$p$. We view any such specialisation $E_{t}/\Qq$ as a projective Galois representation
$$\rho_{t}^\proj: \GQ \twoheadrightarrow \Gal(E_{t}/\QQ) \cong G \subset {\rm{PGL}}_2(\overline{\Ff_p}),$$
thus obtain infinitely non-isomorphic ones. By the result of Tate recalled in \S\ref{ssec:gr}, there is a linear lift
$\rho_{t}: \GQ \to {\rm{GL}}_2(\overline{\Ff_p})$
of $\rho_{t}^\proj$ which is unramified at all prime numbers where $E_{t}/\QQ$ is unramified. Moreover, as in the proof of Proposition \ref{tool0}, $\rho_{t}$ is odd, as $E_{t}$ is totally imaginary. As it is also irreducible, by Theorem~\ref{thm:Serre-Conj}, the representation $\rho_{t}$ comes from some normalised Hecke eigenform. Furthermore, by weight lowering as proved in \cite[Theorem 4.5]{Edi92}, $\rho_{t}$ actually comes from a Katz modular form $f_{t}$ of weight $1$ over~$\overline{\Ff_p}$.
In order to see this, note that we are in case 2.(a) in \cite[Definition 4.3]{Edi92} with $a=b=0$, whence the weight associated with $\rho_t$ equals~$1$. Moreover, note that the hypothesis excluding the `exceptional case' in \cite[Theorem 4.5]{Edi92} is superfluous by the last sentence of \cite[\S1]{Edi92}.

Finally, $G$ is not isomorphic to a quotient of any finite subgroup of $\PGL_2(\CC)$. Indeed, otherwise one would have that $G$ is a quotient of a cyclic group or of a dihedral group or of a finite group among $A_4$, $S_4$ and $A_5$. As this family of groups is easily seen to be closed under quotients, one would have that $G$ itself is cyclic, dihedral or among $A_4$, $S_4$ and $A_5$, which cannot happen by the first part of the assumption. Consequently, the representation $\rho_{t}$ cannot be the reduction of any semi-simple $2$-dimensional Artin representation. Hence, $f_{t}$ cannot be the reduction of a normalised holomorphic Hecke eigenform of weight $1$ and any level.
\end{proof}

Now, we combine Theorem \ref{thm1} and the various tools from \S\ref{ssec:ffe} to show that Statement ($*$) holds for $p \in \{5,7,11\}$:

\begin{cor} \label{coro4}
{\rm{(1)}} Statement {\rm{($*$)}} holds for $p=5$ (with $G= {\rm{PGL}}_2(\Ff_5) \cong S_5$).

\vspace{0.5mm}

\noindent
{\rm{(2)}} Statement {\rm{($*$)}} holds for $p=7$ (with $G={\rm{PSL}}_2(\Ff_7)$).

\vspace{0.5mm}

\noindent
{\rm{(3)}} Statement {\rm{($*$)}} holds for $p=11$ (with $G={\rm{PSL}}_2(\Ff_{11})$).
\end{cor}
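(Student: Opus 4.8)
The plan is to apply Theorem~\ref{thm1} separately in each case, so that the work reduces to checking, for the three pairs $(p,G) \in \{(5,S_5),(7,\PSL_2(\Ff_7)),(11,\PSL_2(\Ff_{11}))\}$, the two hypotheses of that theorem: that $G$ is not isomorphic to a finite subgroup of $\PGL_2(\Cc)$, and that there is a $\Qq$-regular $G$-extension $E/\Qq(T)$ admitting both a totally imaginary specialisation and a specialisation unramified at~$p$. The first hypothesis is immediate: the finite subgroups of $\PGL_2(\Cc)$ are, up to isomorphism, the cyclic groups, the dihedral groups, $A_4$, $S_4$ and $A_5$, all of which are solvable except $A_5$, and $A_5$ has order~$60$. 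Since $S_5$, $\PSL_2(\Ff_7)$ and $\PSL_2(\Ff_{11})$ are non-solvable of orders $120$, $168$ and $660$ respectively, none of them occurs in this list.

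For the extension $E/\Qq(T)$ I would, in each case, exhibit an explicit monic separable polynomial $P(T,Y) \in \Zz[T][Y]$ whose splitting field over $\Qq(T)$ is a $\Qq$-regular Galois extension of group $G$ and which has exactly $r=3$ branch points. Three-branch-point (rigid) realisations of $S_5 \cong \PGL_2(\Ff_5)$, of $\PSL_2(\Ff_7)$ and of $\PSL_2(\Ff_{11})$ over $\Qq$ are available in the literature (for instance, $S_5$ arises from a rational triple of classes of cycle types $(2),(4),(5)$), and it is the choice $r=3$ that makes the numerical conditions below fall into place.

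With $r=3$ in hand, condition~(1) of Theorem~\ref{thm1} comes essentially for free from Proposition~\ref{df}(1): as none of our groups is one of the excluded dihedral groups of order $4,6,8,12$, every specialisation $E_{t_0}/\Qq$ at a rational non-branch point is not totally real, and being Galois it is then totally imaginary. For condition~(2) I would argue in two steps. First, I would check that the discriminant $\Delta(T) \in \Zz[T]$ of $P(T,Y)$ with respect to~$Y$ is not divisible by~$p$, so that Proposition~\ref{nvr} rules out vertical ramification of $E/\Qq(T)$ at~$p$; this is the only verification that genuinely depends on~$p$. Second, since $p \geq 4 = r+1$ for every $p \in \{5,7,11\}$, Lemma~\ref{lemma3} furnishes a $t_0 \in \Qq$ that does not meet any branch point modulo~$p$, whence Proposition~\ref{sit} gives that $E_{t_0}/\Qq$ is unramified at~$p$. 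Both hypotheses of Theorem~\ref{thm1} thus hold, and Statement~($*$) follows for each~$p$.

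The main obstacle is the construction step: pinning down, for each of the three groups, an explicit polynomial that is $\Qq$-regular with full group $G$ over $\Qq(T)$ and has only three branch points, and then certifying the two facts on which the tools of~\S\ref{ssec:ffe} rest, namely that the Galois group over $\Qq(T)$ really is all of $G$ (and not a proper subgroup) and that $p \nmid \Delta(T)$. Once a suitable polynomial has been fixed, the remaining steps are routine bookkeeping of the branch-point count against the bound $p \geq r+1$.
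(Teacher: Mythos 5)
Your reduction to Theorem~\ref{thm1}, your verification that none of the three groups embeds into ${\rm{PGL}}_2(\Cc)$, and your treatment of $p=5$ and $p=7$ all match the paper's proof: for those two primes the paper uses exactly the kind of polynomials you anticipate (the trinomial $Y^5-Y^4-T$ for $S_5\cong{\rm{PGL}}_2(\Ff_5)$, whose discriminant $5^5T^4+4^4T^3$ is prime to $5$, and a Malle--Matzat polynomial for ${\rm{PSL}}_2(\Ff_7)$), both with $r=3$ branch points, and then runs Proposition~\ref{df}(1), Proposition~\ref{nvr}, Lemma~\ref{lemma3} and Proposition~\ref{sit} precisely as you describe.

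The gap is in case (3). Your plan for ${\rm{PSL}}_2(\Ff_{11})$ hinges on the claim that a three-branch-point $\Qq$-regular realisation over $\Qq(T)$, given by an explicit polynomial, is ``available in the literature''. It is not: ${\rm{PSL}}_2(\Ff_{11})$ has no known rationally rigid triple, and the explicit realisation the paper takes from Malle--Matzat has \emph{four} branch points ($\infty$ together with the three roots of the cubic whose fourth power is the discriminant $\Delta(T)$). This is not a bookkeeping issue: with $r=4$, Proposition~\ref{df}(1) is simply not applicable, so condition (1) of Theorem~\ref{thm1} (a totally imaginary specialisation) does not come ``for free'' as in your sketch. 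The paper instead invokes Proposition~\ref{df}(2): the $9$-th $Y$-derivative of its degree-$11$ polynomial is a quadratic with negative discriminant, hence no specialisation at a separable fibre is totally real. (The unramified-at-$11$ condition survives the larger branch-point count, since Lemma~\ref{lemma3} only needs $p \geq r+1 = 5$.) Note also that the one three-branch-point realisation of ${\rm{PSL}}_2(\Ff_{11})$ that abstract theory does provide --- Shih's modular-curve construction, applicable because $2$ is a non-square mod $11$ --- would not rescue your argument: it comes with no explicit polynomial on which to run the discriminant test of Proposition~\ref{nvr}, and its specialisations arise from the projective mod-$11$ representations of (twists of) elliptic curves, which are ramified at $11$, so condition (2) of Theorem~\ref{thm1} would fail. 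To repair your proof of case (3), replace the hoped-for $r=3$ polynomial by the paper's explicit four-branch-point one and substitute the Proposition~\ref{df}(2) derivative argument for Proposition~\ref{df}(1).
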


\begin{proof}
(1) Consider the monic separable polynomial $P(T,Y) = Y^5-Y^4-T$ and denote its splitting field over $\Qq(T)$ by $E$. By \cite[\S4.4]{Ser92}, the extension $E/\Qq(T)$ is $\Qq$-regular, has $r=3$ branch points and has Galois group $S_5 \cong {\rm{PGL}}_2(\mathbb{F}_5)$, which is not a subgroup of ${\rm{PGL}}_2(\Cc)$. Then, by Proposition \ref{df}(1), for every rational number $t_0$, the specialisation $E_{t_0}/\Qq$ is not totally real as soon as $t_0$ is not a branch point. Moreover, by, e.g., \cite[Theorem 2]{Swa62}, the discriminant of $P(T,Y)$ is equal to $5^5T^4+4^4 T^3$, which is not in $5 \Zz[T]$. Hence, the extension $E/\Qq(T)$ has no vertical ramification at $p=5$, by Proposition \ref{nvr}. Furthermore, since $p =5 \geq 4=r+1$, we may use Lemma \ref{lemma3} to get the existence of $t_0 \in \Qq$ such that $t_0$ does not meet any branch point of $E/\Qq(T)$ modulo $p=5$. Hence, by Proposition \ref{sit}, there exists $t_0 \in \Qq$, not a branch point of $E/\mathbb{Q}(T)$,  such that $E_{t_0}/\Qq$ is unramified at $p=5$. It then remains to apply Theorem \ref{thm1} to conclude.

\vspace{0.5mm}

\noindent
(2) The proof is similar in the case $p=7$. Namely, consider the monic separable polynomial
$$P(T,Y)=Y^7 - 56 Y^6 + 609 Y^5 + 1190 Y^4 + 6356 Y^3 + 4536 Y^2 - 6804 Y - 5832 -TY(Y+1)^3$$
and denote its splitting field over $\Qq(T)$ by $E$. By \cite[Satz 3]{MM85}, the extension $E/\Qq(T)$ is $\Qq$-regular, has three branch points and has Galois group ${\rm{PSL}}_2(\mathbb{F}_7)$, which is not contained in ${\rm{PGL}}_2(\Cc)$. Moreover, the reduction modulo $p=7$ of $P(T,Y)$ is $Y^7 - 1 - TY(Y+1)^3$, which has discriminant $-3T^8-T^7 \not= 0$. As above, we apply the various tools from \S\ref{ssec:ffe} and Theorem \ref{thm1} to get the desired conclusion.

\vspace{0.5mm}

\noindent
(3) Consider the monic separable polynomial
$$P(T,Y)= Y^{11} -3Y^{10} + 7Y^9 - 25Y^8 + 46Y^7 - 36Y^6 + 60Y^4 -121Y^3 + 140Y^2-95Y+27 + Y^2(Y-1)^3T$$
and denote its splitting field over $\Qq(T)$ by $E$. The extension $E/\Qq(T)$ is $\Qq$-regular and has Galois group ${\rm{PSL}}_2(\Ff_{11})$, which does not embed into ${\rm{PGL}}_2(\Cc)$ (see page 497 of \cite{MM18} for more details). Moreover, one checks with a computer that the discriminant $\Delta(T)$ of $P(T,Y)$ is
$$\Delta(T) = (108T^3 - 7472T^2 + 267408T + 7987117)^4.$$
Hence, $E/\Qq(T)$ has at most 4 branch points and one may then apply Lemma \ref{lemma3} to get that there exists $t_0 \in \Qq$ such that $t_0$ does not meet any branch point modulo $p=11$. Also, as $\Delta(T)$ is not in $11\Zz[T]$, the extension $E/\Qq(T)$ has no vertical ramification at $p=11$, by Proposition \ref{nvr}. Hence, by Proposition \ref{sit}, there exists $t_0 \in \Qq$, not a branch point of $E/\mathbb{Q}(T)$,  such that $E_{t_0}/\Qq$ is unramified at $p=11$. Concerning the local behaviour at the infinite prime, it actually holds that $E/\Qq(T)$ has four branch points\footnote{Indeed, as recalled in \S\ref{ssec:ffe}, every branch point of $E/\Qq(T)$ is either $\infty$ or a root of $\Delta(T)$. By, e.g., \cite[Lemma 3.1]{Mue02}, $\infty$ is a branch point of $E/\Qq(T)$  (the corresponding ramification index is even equal to 6). Moreover, by the Riemann existence theorem, at least one root of $\Delta(T)$ is a branch point of $E/\Qq(T)$. As $108T^3 - 7472T^2 + 267408T + 7987117$ is irreducible over $\Qq$ (it is easily checked that its reduction modulo 5 has no root in $\Ff_5$), all roots of $\Delta(T)$ have to be branch points of $E/\Qq(T)$, by the so-called Branch Cycle Lemma (see \cite{Fri77} and \cite[Lemma 2.8]{Vol96}).} and, because of that, we cannot use Proposition \ref{df}(1) as above. We then refer to Proposition \ref{df}(2). Namely, the 9-th derivative with respect to $Y$ of $P(T,Y)$ is
$$\frac{11!}{2}Y^2 - 3 \cdot 10! \cdot Y +  7 \cdot 9!,$$
which has discriminant $10! \cdot 9! \cdot (9 \cdot 10-2 \cdot 11 \cdot 7) <0$. Hence, the specialisation $E_{t_0}/\Qq$ is not totally real for every rational number $t_0$ such that $P(t_0,Y)$ is separable. As in the previous cases, it then remains to apply Theorem \ref{thm1} to conclude the proof.
\end{proof}

\begin{rem}
Of course, variants can be given, by making use of other explicit polynomials $P(T,Y)$. For example, in the case $p=7$, one can also use the polynomial 
$$P(T,Y)= Y^7+ Y^6 + Y^5 + T Y^4 + (T-2) Y^3 - 5 Y^2 - 2Y +1 \in \QQ[T][Y],$$
which is intensively studied in \cite{LSY12}, to prove that Statement {\rm{($*$)}} holds (with $G={\rm{PSL}}_2(\mathbb{F}_7)$ too).
\end{rem}

A tool used throughout the proof of Corollary \ref{coro4} is Lemma \ref{lemma3}, which does not apply in the case $p=3$ if ${\rm{Gal}}(E/\Qq(T))$ is not cyclic (by the Riemann existence theorem). However, Statement ($*$) still holds in this case:

\begin{cor} \label{cor5}
Statement {\rm{($*$)}} holds for $p=3$ (with $G= {\rm{PSL}}_2(\Ff_9) \cong A_6$).
\end{cor}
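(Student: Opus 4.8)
The plan is to invoke Theorem~\ref{thm1} with $p=3$ and $G=\PSL_2(\mathbb{F}_9)\cong A_6$, regarded inside $\PGL_2(\overline{\mathbb{F}_3})$ through the chain of inclusions $\PSL_2(\mathbb{F}_9)\subset\PGL_2(\mathbb{F}_9)\subset\PGL_2(\overline{\mathbb{F}_3})$. The first input required is a $\Qq$-regular Galois extension $E/\Qq(T)$ of group $A_6$, realised as the splitting field of an explicit monic separable polynomial $P(T,Y)\in\ZZ[T][Y]$; such regular realisations of $A_6\cong\PSL_2(\mathbb{F}_9)$ are available in the inverse Galois theory literature, and I would fix one whose discriminant is convenient modulo $3$ (see below). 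The hypothesis that $G$ is isomorphic to no finite subgroup of $\PGL_2(\CC)$ is immediate: the finite subgroups of $\PGL_2(\CC)$ are cyclic, dihedral, $A_4$, $S_4$ or $A_5$, whereas $A_6$ is a simple group of order $360$.

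It then remains to check conditions (1) and (2) of Theorem~\ref{thm1}. For condition (1) I would produce a totally imaginary specialisation via Proposition~\ref{df}. Since every specialisation $E_{t_0}/\Qq$ is Galois, for it the properties ``not totally real'' and ``totally imaginary'' coincide; hence it suffices to find one non-branch $t_0\in\Qq$ with $E_{t_0}/\Qq$ not totally real. If the chosen $E/\Qq(T)$ has exactly three branch points this follows at once from Proposition~\ref{df}(1), as $A_6$ is not dihedral; otherwise I would pick $t_0$ with $P(t_0,Y)$ separable and verify, through Proposition~\ref{df}(2), that some iterated $Y$-derivative of $P(t_0,Y)$ has a non-real root.

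The main obstacle is condition (2), the existence of a specialisation unramified at $p=3$. As the text stresses, Lemma~\ref{lemma3} is unavailable: $A_6$ is non-cyclic, so by the Riemann existence theorem $E/\Qq(T)$ has at least three branch points, and the hypothesis $p\ge r+1$ fails for $p=3$. Instead I would argue directly with discriminants. Let $\Delta(T)\in\ZZ[T]$ be the discriminant of $P(T,Y)$. First I would confirm $\Delta(T)\not\equiv 0\pmod 3$, which by Proposition~\ref{nvr} excludes vertical ramification at $3$; this is exactly the feature I would demand of the defining polynomial. Next I would exhibit an integer $t_0$ whose residue $\overline{t_0}\in\mathbb{F}_3$ satisfies $\overline{\Delta}(\overline{t_0})\neq 0$, where $\overline{\Delta}=\Delta\bmod 3$ --- possible as soon as $\overline{\Delta}$ does not vanish identically on $\mathbb{F}_3$. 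For such a $t_0$ the reduction $P(t_0,Y)\bmod 3$ is separable, so $\ZZ_3[Y]/(P(t_0,Y))$ is étale over $\ZZ_3$; every root of $P(t_0,Y)$ then generates an unramified extension of $\Qq_3$, whence $E_{t_0}\otimes_\Qq\Qq_3$ is unramified and $E_{t_0}/\Qq$ is unramified at $3$. (Equivalently, this $t_0$ meets no branch point modulo $3$, so Proposition~\ref{sit} applies.)

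With both conditions verified, Theorem~\ref{thm1} yields the desired infinite regular $1$-parameter projective $\PSL_2(\mathbb{F}_9)$-Galois family of non-liftable weight one Katz eigenforms over $\overline{\mathbb{F}_3}$. The crux of the argument is thus entirely concentrated in the choice of $P(T,Y)$: it must realise $A_6$ $\Qq$-regularly, carry a discriminant that is nonzero modulo $3$ yet has a non-root in $\mathbb{F}_3$, and pass the non-totally-real test of Proposition~\ref{df}. Locating a single polynomial --- from the literature or by explicit search --- meeting all three demands at once is where the substantive work lies.
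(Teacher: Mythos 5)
Your overall frame (reduce to Theorem~\ref{thm1}, check the group-theoretic hypothesis, then verify conditions (1) and (2)) matches the paper, and your individual deductions are sound: a Galois number field that is not totally real is indeed totally imaginary, and separability of $P(t_0,Y)$ modulo $3$ does force $E_{t_0}/\Qq$ to be unramified at $3$. But there is a genuine gap exactly where you concede ``the substantive work lies'': you never exhibit, nor prove the existence of, a monic separable $P(T,Y)\in\ZZ[T][Y]$ that simultaneously (i) realises $A_6$ $\Qq$-regularly, (ii) has discriminant $\Delta(T)\not\equiv 0\pmod 3$, and (iii) has $\overline{\Delta}$ not vanishing identically on $\FF_3$. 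This cannot be deferred as a routine verification: since $A_6$ is not cyclic, the Riemann existence theorem forces at least three branch points, while $\PP^1(\FF_3)$ has only four elements, so for a given realisation the reductions of the roots of $\Delta$ may well cover all of $\FF_3$, and nothing guarantees that some realisation in the literature avoids this. Your requirement (iii) is the same obstruction that makes Lemma~\ref{lemma3} unusable at $p=3$, merely relocated into the choice of polynomial; note that the paper itself, for $p\geq 13$, states it is not aware of any polynomial with the analogous discriminant property. As written, your argument is therefore conditional on an unproved existence statement.

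The paper closes this gap by a different mechanism. Instead of controlling ramification of specialisations through an explicit discriminant, it invokes a prescribed-specialisation result of Beckmann--Black type (\cite[Theorem 3]{KM01}) to produce a $\Qq$-regular $A_6$-extension $E/\Qq(T)$ whose specialisation at $t_0=0$ equals the splitting field of $f(Y)=(Y^2+1)(Y^2+4)$, namely $\Qq(\sqrt{-1})$. The key observation your proposal misses is that conditions (1) and (2) of Theorem~\ref{thm1} do \emph{not} require the specialisations to have full Galois group $G$: one may therefore prescribe a tiny degree-$2$ specialisation $\Qq(\sqrt{-1})$, which is simultaneously totally imaginary and unramified at $3$, so that both conditions hold with the same $t_0=0$ (the upgrade to infinitely many full-group specialisations with the right local behaviour happens inside the proof of Theorem~\ref{thm1} via Proposition~\ref{pv}). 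To salvage your route you would have to actually produce a polynomial satisfying (i)--(iii), which is precisely the open-ended search your write-up postpones.
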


\begin{proof}
Clearly, $A_6 \cong {\rm{PSL}}_2(\Ff_9)$ does not embed into ${\rm{PGL}}_2(\Cc)$. Now, consider the polynomial $f(Y)=(Y^2+1)(Y^2+4) \in \Qq[Y]$. It is separable and has discriminant $4 \cdot 9 \cdot 9 \cdot 16$, which is a square in $\Qq$. The splitting field over $\Qq$ of this polynomial is $\Qq(\sqrt{-1})$, which is unramified at 3 and totally imaginary. By \cite[Theorem 3]{KM01}, there exists a monic separable polynomial $P(T,Y) \in \Qq[T][Y]$ of splitting field $E$ over $\Qq(T)$ such that $E/\Qq(T)$ is a $\Qq$-regular Galois extension of group $A_6$ and such that the splitting fields over $\Qq$ of $P(0,Y)$ and $f(Y)$ coincide. From this equality and the fact that $f(Y)$ is separable, we get that the specialised field $E_0$ is equal to the splitting field of $f(Y)$ over $\Qq$ and that $0$ is not a branch point of $E/\mathbb{Q}(T)$. It then remains to apply Theorem \ref{thm1} to conclude.
\end{proof}

Finally, we discuss the case $p \geq 13$. Another common feature of the proof of Corollary \ref{coro4} is the existence of a monic separable polynomial $P(T,Y) \in \Zz[T][Y]$ of discriminant $\Delta(T) \not \in p\Zz[T]$ and of Galois group ${\rm{PGL}}_2(\mathbb{F}_{p^n})$ or ${\rm{PSL}}_2(\mathbb{F}_{p^n})$ over $\Qq(T)$ (for some $n \geq 1$). For $p \geq 13$, we are not aware of any polynomial satisfying both conditions. For example, no explicit polynomial of group ${\rm{PGL}}_2(\mathbb{F}_p)$ ($11 \leq p \leq 29$) given in pages 499-500 of \cite{MM18} satisfies the former. We also notice that Statement ($*$) for the given prime number $p$ implies that some ${\rm{PGL}}_2(\Ff_{p^n})$ or ${\rm{PSL}}_2(\Ff_{p^n})$ occurs as the Galois group of a $\Qq$-regular Galois extension of $\Qq(T)$, which is unknown in general. Of course, for some prime numbers $p$, this is known (usually for $n=1$) and one even has such $\Qq$-regular extensions with three branch points, coming from the rigidity method (see, e.g., \cite[Chapter I, Corollary 8.10]{MM18} for more details). In particular, for such a prime number $p$, we obtain an infinite regular $1$-parameter projective $G$-Galois family, with $G={\rm{PGL}}_2(\Ff_p)$ or $G={\rm{PSL}}_2(\Ff_p)$.

Nevertheless, one has the following result:

\begin{prop} 
Let $p$ be an odd prime number. Then there exist a finite group $G$ of order $2p^2$ and a $\Qq$-regular Galois extension $E/\Qq(T)$ of group $G$ such that the following conditions hold:

\vspace{0.5mm}

\noindent
{\rm{(1)}} $G \subseteq {\rm{PGL}}_2(\overline{\mathbb{F}_p})$ but $G \not \subseteq {\rm{PGL}}_2(\Cc)$,

\vspace{0.5mm}

\noindent
{\rm{(2)}} there exists $t_0 \in \Qq$, not a branch point of $E/\mathbb{Q}(T)$,  such that $E_{t_0}/\Qq$ is totally imaginary,

\vspace{0.5mm}

\noindent
{\rm{(3)}} there exists $t_0 \in \Qq$, not a branch point of $E/\mathbb{Q}(T)$,  such that $E_{t_0}/\Qq$ is unramified at $p$.
\end{prop}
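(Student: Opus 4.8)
The group I would use is the generalised dihedral group $G = (\ZZ/p\ZZ)^2 \rtimes \ZZ/2\ZZ$, where the nontrivial element of $\ZZ/2\ZZ$ acts on $(\ZZ/p\ZZ)^2$ by inversion $x\mapsto -x$; it has order $2p^2$. To get condition (1) I would embed $G$ into the Borel subgroup of $\PGL_2(\overline{\Ff_p})$: take the additive subgroup $V=\{\,\mat{1}{b}{0}{1} : b\in\Ff_{p^2}\,\}\cong(\Ff_{p^2},+)\cong(\ZZ/p\ZZ)^2$ of the unipotent radical (using $\Ff_{p^2}\subseteq\overline{\Ff_p}$) together with the involution $w=\mat{-1}{0}{0}{1}$, which normalises $V$ and conjugates $\mat{1}{b}{0}{1}$ into $\mat{1}{-b}{0}{1}$; thus $\langle V,w\rangle = V\rtimes\langle w\rangle\cong G\subseteq\PGL_2(\overline{\Ff_p})$. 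That $G\not\subseteq\PGL_2(\Cc)$ follows from the classification of finite subgroups of $\PGL_2(\Cc)$ (cyclic, dihedral, $A_4$, $S_4$, $A_5$): $G$ is non-cyclic and non-abelian (as $p$ is odd); it is not dihedral, since a dihedral group of order $2p^2$ has a \emph{cyclic} subgroup of order $p^2$ whereas the Sylow $p$-subgroup $(\ZZ/p\ZZ)^2$ of $G$ is non-cyclic; and $2p^2\notin\{12,24,60\}$ rules out $A_4,S_4,A_5$.

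For (2) and (3) the plan is to prescribe one well-chosen $G$-extension of $\Qq$ and realise it as a specialisation, exactly in the spirit of the proof of Corollary~\ref{cor5}. First I would construct a totally imaginary $G$-extension $F/\Qq$ that is unramified at $p$ by anticyclotomic class field theory: fix an imaginary quadratic field $k_0=\Qq(\sqrt{-d})$ in which $p$ is unramified, and take an anticyclotomic abelian extension $M/k_0$ of conductor prime to $p$ (that is, one on which complex conjugation acts by $-1$) with $\Gal(M/k_0)\cong(\ZZ/p\ZZ)^2$; such $M$ exist by genus theory after allowing auxiliary ramification at rational primes $\ell\neq p$. Then $\Gal(M/\Qq)\cong G$, the field $F:=M$ is totally imaginary because $k_0$ is imaginary and $M/\Qq$ is Galois, and $F/\Qq$ is unramified at $p$ because the conductor is prime to $p$ and $p$ is unramified in $k_0$. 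Next I would realise $F$ as a specialisation: since $G$ is generalised dihedral over an elementary abelian group of odd order, it admits a generic polynomial over $\Qq$ (the class of groups with generic polynomials over $\Qq$ is closed under direct products, which handles the factor $(\ZZ/p\ZZ)^2$, while the inversion is a quadratic twist; equivalently one glues two $\Qq$-regular $D_p$-extensions along the common quadratic subfield $k_0$ of the two $D_p$-quotients of $F$). Feeding $F$ into Lemma~\ref{kln2} then yields a $\Qq$-regular Galois extension $E/\Qq(T)$ of group $G$ together with $t_0\in\Qq$ such that $E_{t_0}=F$; this single specialisation is totally imaginary and unramified at $p$, giving (2) and (3) at once.

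The main obstacle is precisely this realisation step, and it is forced by the fact that $p\mid|G|=2p^2$. Because of this, $E/\Qq(T)$ unavoidably has wild vertical ramification at $p$, so the direct tools used for $p\in\{5,7,11\}$ in Corollary~\ref{coro4} — Lemma~\ref{lemma3} together with Propositions~\ref{nvr} and~\ref{sit} — cannot produce an unramified-at-$p$ specialisation here; one genuinely has to prescribe the good field $F$ and pull it down via Beckmann--Black, as in Corollary~\ref{cor5}. Consequently the two points deserving real care are (i) justifying that $\mathrm{Dih}((\ZZ/p\ZZ)^2)$ has a generic polynomial, equivalently satisfies the Beckmann--Black property, over $\Qq$, so that Lemma~\ref{kln2} applies, and (ii) the anticyclotomic construction of a $(\ZZ/p\ZZ)^2$-extension of $k_0$ unramified at $p$, which is where condition (3) is actually won. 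Conditions (1) and (2) are comparatively routine, the latter also being obtainable more cheaply from Proposition~\ref{df}(2) or Proposition~\ref{pv} once a regular model is in hand.
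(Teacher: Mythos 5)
Your solution of condition (1) --- the subgroup $\left\{\mat{\pm 1}{x}{0}{1} : x \in \Ff_{p^2}\right\}$ of $\PGL_2(\Ff_{p^2})$, with cyclic and dihedral groups excluded via the non-cyclic Sylow $p$-subgroup and $A_4$, $S_4$, $A_5$ excluded by cardinality --- is exactly the paper's argument. For conditions (2) and (3), however, you have made the problem substantially harder than it is. Neither condition asks for the specialisation $E_{t_0}/\Qq$ to have full Galois group $G$; Lemma~\ref{kln2} allows one to prescribe specialisations whose Galois groups are arbitrary \emph{subgroups} of $G$. The paper therefore simply takes $F_1=\Qq(\sqrt{-1})$ (totally imaginary and, since $p$ is odd, unramified at $p$) and $F_2=\Qq$, and both conditions follow at once --- no anticyclotomic class field theory at all. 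Note that Corollary~\ref{cor5}, which you invoke as your model, already works this way: the prescribed specialisation there is the quadratic field $\Qq(\sqrt{-1})$, not a full $A_6$-extension. Your construction of a genuinely $G$-Galois, totally imaginary, unramified-at-$p$ field $F$ via anticyclotomic (ring class field) extensions of an imaginary quadratic field is plausible and would even give a stronger conclusion (one specialisation with full group $G$ satisfying (2) and (3) simultaneously), but it is only sketched --- ``genus theory'' is not the right tool, one needs ray/ring class groups with auxiliary split primes $\ell\equiv 1 \pmod p$ --- and it is unnecessary for the statement.

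Beyond being overbuilt, the write-up has two genuine defects. First, both your route and the paper's must verify the hypothesis of Lemma~\ref{kln2}, namely that $G\cong(\ZZ/p\ZZ\times\ZZ/p\ZZ)\rtimes\ZZ/2\ZZ$ has a generic polynomial over $\Qq$. Your justification --- closure under direct products for the factor $(\ZZ/p\ZZ)^2$, then ``the inversion is a quadratic twist'' --- is not a proof for the semidirect product; the missing ingredient is Saltman's theorem on semidirect products, which is what the paper cites (\cite[Corollary 7.2.2]{JLY02}), combined with \cite[Theorem 0.5.3]{JLY02} for the abelian kernel. Relatedly, ``has a generic polynomial, equivalently satisfies the Beckmann--Black property'' is false: genericity implies Beckmann--Black, not conversely; Lemma~\ref{kln2} needs genericity, while for your single field $F$ the Beckmann--Black property would indeed suffice --- but you establish neither. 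Second, your claim that $p\mid |G|$ forces wild vertical ramification at $p$, so that Lemma~\ref{lemma3} and Propositions~\ref{nvr} and~\ref{sit} ``cannot'' apply, is simply wrong: in Corollary~\ref{coro4} the groups $\PGL_2(\Ff_5)$, $\PSL_2(\Ff_7)$, $\PSL_2(\Ff_{11})$ all have order divisible by $p$, and Proposition~\ref{nvr} is used there precisely to show that the relevant extensions have \emph{no} vertical ramification at $p$. This false claim only affects your motivation, not your construction, but it is what pushed you onto the long route.
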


\begin{proof}
Consider the subset 
$$G=\bigg\{\begin{pmatrix} 
 1 & x \\
 0 & 1
 \end{pmatrix},
\begin{pmatrix} 
 -1 & x \\
 0 & 1
 \end{pmatrix}
\, \, : \, \, x \in \mathbb{F}_{p^2} \bigg\}$$
of ${\rm{PGL}}_2(\mathbb{F}_{p^2}) \subset {\rm{PGL}}_2(\overline{\Ff_p})$. It is easily checked that $G$ is a subgroup of ${\rm{PGL}}_2(\mathbb{F}_{p^2})$ of order $2p^2$. Actually, one has
\begin{equation} \label{eq}
G \cong (\Zz/p\Zz \times \Zz/p\Zz) \rtimes \Zz/2\Zz.
\end{equation}
Moreover, the group $G$ is not a subgroup of ${\rm{PGL}}_2(\Cc)$ (hence, (1) holds). Indeed, one cannot have $G \cong S_4, A_4, A_5$ for cardinality reasons. Moreover, if $G$ was either cyclic or dihedral, then its unique $p$-Sylow subgroup would be $\Zz/p^2\Zz$, which cannot happen.

Now, set $F_1/\Qq= \Qq(\sqrt{-1})/\Qq $ and $F_2/\Qq=\Qq/\Qq$. By, e.g., \cite[Theorem 0.5.3]{JLY02}, $\Zz/p\Zz \times \Zz/p\Zz$ has a generic polynomial with rational coefficients. Clearly, the same is also true for $\Zz/2\Zz$. Consequently, by \eqref{eq} and a well-known result of Saltman (see, e.g., \cite[Corollary 7.2.2]{JLY02}), $G$ has a generic polynomial over $\Qq$. It then remains to apply Lemma \ref{kln2} to construct a $\Qq$-regular Galois extension $E/\Qq(T)$ of group $G$ which specializes to $F_1/\Qq$ and $F_2/\Qq$ at non branch points, thus ending the proof.
\end{proof}

Unfortunately, this result does not apply in the same way as for the previous examples because the group $G$ does not occur as the image of a $2$-dimensional semi-simple representation over $\overline{\Ff_p}$, hence, we cannot immediately get modularity results.

\bibliography{Biblio2}
\bibliographystyle{alpha}

\bigskip

\noindent 
\begin{tabular}{lll}
Sara Arias-de-Reyna   & Fran\c{c}ois Legrand    & Gabor Wiese\\
Departamento de \'Algebra  & Institut f\"ur Algebra & Department of Mathematics\\
Facultad de Matem\'aticas  & Fachrichtung Mathematik & \\
Universidad de Sevilla     & Technische Universit\"at Dresden & University of Luxembourg \\
Spain                      & Germany & Luxembourg \\
\url{sara_arias@us.es} & \url{francois.legrand@tu-dresden.de}    & \url{gabor.wiese@uni.lu}\\
\end{tabular}

\end{document}